\documentclass[11pt]{amsart}


\usepackage{amstext}
\usepackage{amsthm,euscript}
\usepackage{amscd}
\usepackage{amssymb}
\usepackage{amsmath}
\usepackage{mathtools}
\usepackage[all]{xy}
\usepackage{paralist}
\usepackage{enumitem}
\usepackage{moreenum}
\usepackage{scalerel,stackengine}
\usepackage[colorlinks=true, pdfstartview=FitV, linkcolor=blue,
citecolor=blue,urlcolor=blue,breaklinks=true]{hyperref}
\usepackage{subfiles} 

\usepackage{lmodern} 
\usepackage{xcolor}
\usepackage{tikz-cd}
\usepackage{arydshln}

\usepackage{forloop}
\newcounter{fonts}
\setcounter{fonts}{1}
\let\eeee\edef
\forloop{fonts}{1}{\value{fonts} < 27}{%
\expandafter\eeee\csname \Alph{fonts}\Alph{fonts}\endcsname{\noexpand\mathbb{\Alph{fonts}}} 
\expandafter\eeee\csname bb\Alph{fonts}\endcsname{\noexpand\mathbb{\Alph{fonts}}} 
\expandafter\eeee\csname b\Alph{fonts}\endcsname{\noexpand\mathbf{\Alph{fonts}}} 
\expandafter\eeee\csname u\Alph{fonts}\endcsname{\noexpand\mathbf{\Alph{fonts}}} 
\expandafter\eeee\csname bf\Alph{fonts}\endcsname{\noexpand\mathbf{\Alph{fonts}}} 
\expandafter\eeee\csname bf\alph{fonts}\endcsname{\noexpand\mathbf{\alph{fonts}}} 
\expandafter\eeee\csname u\Alph{fonts}\Alph{fonts}\endcsname{\noexpand\mathbf{\Alph{fonts}}_0} 
\expandafter\eeee\csname c\Alph{fonts}\endcsname{\noexpand\mathcal{\Alph{fonts}}} 
\expandafter\eeee\csname cal\Alph{fonts}\endcsname{\noexpand\mathcal{\Alph{fonts}}} 
\expandafter\eeee\csname fr\Alph{fonts}\endcsname{\noexpand\mathfrak{\Alph{fonts}}} 
\expandafter\eeee\csname fr\alph{fonts}\endcsname{\noexpand\mathfrak{\alph{fonts}}} 
\expandafter\eeee\csname s\Alph{fonts}\endcsname{\noexpand\mathscr{\Alph{fonts}}} 
\expandafter\eeee\csname eu\Alph{fonts}\endcsname{\noexpand\EuScript{\Alph{fonts}}} 
\expandafter\eeee\csname sf\Alph{fonts}\endcsname{{\noexpand\sf \Alph{fonts}}} 
\expandafter\eeee\csname sf\alph{fonts}\endcsname{{\noexpand\sf \alph{fonts}}} 
\expandafter\eeee\csname ul\Alph{fonts}\endcsname{\noexpand\underline{\noexpand\rm \Alph{fonts}}} 
}

\swapnumbers 

\newcommand{\arxiv}[1]{\href{http://arxiv.org/abs/#1}{\tt arXiv:\nolinkurl{#1}}}
\newtheorem{thm}[subsection]{Theorem}
\newtheorem*{thm*}{Theorem}

\newtheorem{prop}[subsection]{Proposition}
\newtheorem{lem}[subsection]{Lemma}

\newtheorem{cor}[subsection]{Corollary}

\theoremstyle{definition}
\newtheorem{defn}[subsection]{Definition}
\newtheorem*{defn*}{Definition}

\newtheorem{remark}[subsection]{Remark}

\newtheorem{remarks}[subsection]{Remarks}

\newtheorem{example}[subsection]{Example}

\newtheorem{examples}[subsection]{Examples}

\newtheorem{question}[subsection]{Question}

\numberwithin{equation}{subsection}

\newcommand\sm{\smallskip}

\newcommand{\lv}[1]{} 




\newcounter{question} \setcounter{question}{0}
\newcounter{suggestion}\setcounter{suggestion}{0}


\newcommand{\arXiv}[1]{\href{http://arxiv.org/abs/#1}{\tt arXiv:\nolinkurl{#1}}}


\newcommand{\simlgr}{\buildrel \sim \over \longrightarrow}

\newcommand{\simla}{\buildrel \sim \over \longleftarrow}

\newcommand\ch{^{\scriptscriptstyle\vee}}

\newcommand{\fppf}{_\mathrm{fppf}}
\newcommand{\et}{_\mathrm{\acute{e}t}}
\newcommand{\Zar}{_\mathrm{Zar}}

\newcommand{\ti}{^\times}

\def\co{\colon}
\def\ot{\otimes} 
\def\op{^{\rm op}}

\newcommand{\me}{^{-1}}
\def\dar[#1]{\ar@<2pt>[#1]\ar@<-2pt>[#1]}
\def\wtl{\widetilde}



\stackMath
\newcommand\reallywidehat[1]{%
\savestack{\tmpbox}{\stretchto{%
  \scaleto{%
    \scalerel*[\widthof{\ensuremath{#1}}]{\kern.1pt\mathchar"0362\kern.1pt}%
    {\rule{0ex}{\textheight}}
  }{\textheight}%
}{2.4ex}}%
\stackon[-6.9pt]{#1}{\tmpbox}%
}


\newcommand{\Alt}{{\operatorname{Alt}}}

\newcommand{\Aut}{\operatorname{Aut}}





 %




\newcommand{\End}{\operatorname{End}}


\newcommand{\GL}{{\operatorname{GL}}}

\newcommand{\Hom}{\operatorname{Hom}}

\newcommand{\cHom}{\mathcal{H}\hspace{-0.4ex}\textit{o\hspace{-0.2ex}m}} 
\newcommand{\cEnd}{\mathcal{E}\hspace{-0.4ex}\textit{n\hspace{-0.2ex}d}} 
\newcommand{\cSym}{\mathcal{S}\hspace{-0.5ex}\textit{y\hspace{-0.3ex}m}}
\newcommand{\cSymd}{\mathcal{S}\hspace{-0.5ex}\textit{y\hspace{-0.3ex}m\hspace{-0.2ex}d}}
\newcommand{\cSkew}{\mathcal{S}\hspace{-0.4ex}\textit{k\hspace{-0.2ex}e\hspace{-0.3ex}w}}
\newcommand{\cAlt}{\mathcal{A}\hspace{-0.1ex}\textit{$\ell$\hspace{-0.3ex}t}}

\newcommand{\Id}{\operatorname{Id}}
\newcommand{\Ima}{\operatorname{Im}}



\newcommand{\Ker}{\operatorname{Ker}}

\newcommand{\Mat}{{\operatorname{M}}}




\newcommand{\PGL}{{\operatorname{PGL}}}
\newcommand{\Pic}{{\operatorname{Pic}}}

\newcommand{\rank}{\operatorname{rank}}

\newcommand{\Spec}{\operatorname{Spec}}

\newcommand{\Skew}{{\operatorname{Skew}}}

\newcommand{\Span}{\operatorname{Span}}

\newcommand{\Symm}{{\operatorname{Sym}}}
\newcommand{\Symd}{{\operatorname{Symd}}}


\newcommand{\Trd}{\operatorname{Trd}}


\newcommand{\cOS}{\calO_S}










\newcommand{\uGL}{\mathbf{GL}}






\newcommand{\m}{\mathfrak m}

















\newcommand{\Sch}{\mathfrak{Sch}} 

\newcommand{\Sets}{\mathfrak{Sets}} 
\newcommand{\Rings}{\mathfrak{Rings}} 
\newcommand{\Ab}{\mathfrak{Ab}} 
\newcommand{\Grp}{\mathfrak{Grp}} 
\newcommand{\fMod}{\mathfrak{Mod}} 
\newcommand{\QCoh}{\mathfrak{QCoh}}




\newcommand\veps{\varepsilon}

 \newcommand\vphi{\varphi}

\newcommand\si{\sigma}


 \newcommand\Om{\Omega}


\newcommand{\bmu}{\boldsymbol{\mu}}

\newcommand{\und}{\underline{\hspace{2ex}}} 
\DeclareMathOperator*{\colim}{colim} 

\DeclareMathOperator{\Sand}{Sand}
\newcommand{\Strong}{\Omega}
\newcommand{\Weak}{\omega}
\newcommand{\Inn}{\mathrm{Inn}} 

\newcommand{\iso}{\overset{\sim}{\longrightarrow}}



\begin{document}
\title[Quadratic Pairs]{Azumaya Algebras and Obstructions to Quadratic Pairs over a Scheme}
\author[P. Gille]{Philippe Gille}
\address{UMR 5208 du CNRS -
Institut Camille Jordan - Universit\'e Claude Bernard Lyon 1, 43 boulevard du
11 novembre 1918, 69622 Villeurbanne cedex - France }
\email{gille@math.univ-lyon1.fr}

\author[E. Neher]{Erhard Neher}
\address{Department of Mathematics and Statistics, University of Ottawa, 150 Louis-Pasteur Private,
Ottawa, Ontario, Canada, K1N 9A7}
\email{Erhard.Neher@uottawa.ca}

\author[C. Ruether]{Cameron Ruether}
\address{Department of Mathematics and Statistics, Memorial University of Newfoundland, St. John's, NL, Canada, A1C 5S7}
\email{cameronruether@gmail.ca}

\thanks{The first author was supported by the Labex Milyon (ANR-10-LABX-0070) of Universit\'e de Lyon, within the program ``Investissements d'Avenir'' (ANR-11-IDEX- 0007) operated by the French National Research Agency (ANR). The research of the second author was partially supported by an NSERC grant. The research of the third author was partially supported by the NSERC grants of the second author and of Kirill Zainoulline at the University of Ottawa (2022) and partially supported by the NSERC grants of Mikhail Kotchetov and Yorck Sommerh\"auser at the Memorial University of Newfoundland (2023).}

\date{January 27, 2025}

\maketitle

\noindent{\bf Abstract:} We investigate quadratic pairs for Azumaya algebras with involutions over a base scheme $S$
as defined by Calm\`es and Fasel, generalizing the case of quadratic pairs on central simple algebras over a field (Knus, Merkurjev, Rost, Tignol).
We describe a cohomological obstruction for an Azumaya algebra over $S$ with orthogonal involution to admit a quadratic pair and provide a classification of all quadratic pairs it admits. When $S$ is affine this obstruction vanishes, however it is non-trivial in general. In particular, we construct explicit examples with non-trivial obstructions.
\medskip

\noindent{\bf Keywords: Azumaya Algebras, Involutions, Quadratic Forms, Quadratic Pairs.}\\
\medskip
\noindent {\em MSC 2020: Primary 11E81. Secondary 11E39, 14F20, 16H05, 20G10, 20G35.}
\bigskip
\section*{Introduction}
{%
\renewcommand{\thesubsection}{\Alph{subsection}}
In this paper we study orthogonal involutions on Azumaya algebras over a arbitrary base scheme, in particular we use characteristic independent methods focusing on quadratic pairs, defined below. In this setting, all simply connected (resp. adjoint) semisimple groups of type $D_n$, $n\geq 5$, are, up to isomorphism, the spin (resp. projective orthogonal group) of an Azumaya algebra with a quadratic pair, \cite[8.4.0.62, 8.4.0.63]{CF}.
\sm

Our work includes the commonly excluded case of fields of characteristic $2$, and more: it also includes the cases when $2$ is neither invertible nor $0$. To illustrate some complications which this causes, let us consider the special case $\cA = \End_R(M)$, where $M$ is a locally free $R$--module of constant even rank,  equipped with a regular quadratic form $q$, and let $\si$ be the associated adjoint involution, see \ref{bcbf} for explanation. The Dickson homomorphism associates with $(M,q)$ a quadratic \'etale $R$--algebra. This assignment is highly non-injective.
Hence, one  cannot expect the theory of regular quadratic forms to be any simpler than that of quadratic \'etale $R$--algebras. But already the theory of quadratic \'etale algebras over arbitrary $R$ is quite involved. They are classified by the \'etale cohomology set $H^1\et(R,\ZZ/2\ZZ)$. If $2$ is invertible, one can use Kummer theory to describe this group, while if $2=0$ one needs Artin-Schreier theory to describe it. Both cases are subsumed by Waterhouse's intricate theory for general $R$, developed in \cite{Waterhouse}. Another instance covered by our setting is that of quadratic forms over the integers.
 \smallskip

We will use the following conventions as in \cite{CF} which differ from those in \cite{KMRT}. An involution on a central simple algebra is \emph{orthogonal} if after extension to a splitting field it is adjoint to regular symmetric bilinear form, it is called \emph{weakly-symplectic} if the bilinear form is skew-symmetric, and called \emph{symplectic} if the bilinear form is alternating. Note that in characteristic $2$ any involution is simultaneously orthogonal and weakly-symplectic.

Previously, working over fields of arbitrary characteristic, all semisimple groups of type $D_n$ with $n \neq 4$ have been constructed by Tits in \cite{Tits} using a more general notion of quadratic form. His approach was generalized by the authors of The Book of Involutions \cite{KMRT} who introduced the more flexible concept of a quadratic pair.

Let $A$ be a central simple $\FF$--algebra. A \emph{quadratic pair} on $A$ is a pair $(\si,f)$ where $\si$ is an orthogonal involution and $f$ is a linear function $f\colon \Symm(A,\si) \to \FF$, called a \emph{semi-trace}, on the symmetric elements of $A$ (those $a\in A$ satisfying $\si(a)=a$), such that $f(a+\si(a))=\Trd_A(a)$ for all $a\in A$. Here $\Trd_A$ denotes the reduced trace of $A$, a notation we will also use later for Azumaya algebras over a scheme. For example, any $\ell \in A$ satisfying $\ell+\sigma(\ell) = 1$ gives rise to a quadratic pair $(\sigma,f)$ by putting $f=\Trd_A(\ell \und)|_{\Symm(A,\sigma)}$. Conversely, any quadratic  pair $(\sigma,f)$ is of this type by \cite[5.7]{KMRT}. Note that if one works in a context where $2$ is invertible, then such an $f$ exists and is unique, in particular $f=\frac{1}{2}\Trd_A(\und)$.

The notion of quadratic pairs was extended by Calm\`es and Fasel \cite{CF} from the setting of central simple algebras over a field, to the setting of Azumaya algebras over a scheme $S$. Their notion of a quadratic pair, and its resulting properties, is parallel to the concept over fields. In this generalized setting, objects such as algebras, groups, bilinear forms, etc., are sheaves or maps of sheaves on the (big) fppf site $\Sch_S$. Modules and algebras will be with respect to the global sections functor $\cO\colon \Sch_S \to \Ab$ sending $T\in \Sch_S$ to $\Gamma(T,\cO_T)$.

In this work, we address the following natural questions which were not asked in \cite{CF}.
\begin{enumerate}[label={\rm(\roman*)}]
\item \label{question_i} Given an Azumaya $\cO$--algebra with orthogonal involution $(\cA,\sigma)$ over the base scheme $S$, does there exists a quadratic pair $(\sigma,f)$ on $\cA$?
\item \label{question_ii} If there is such a pair involving $\sigma$, how many are there? In particular, is there a classification of such pairs?
\end{enumerate}
The answer to these questions goes as follows. We denote by $\cSymd_{\cA,\sigma}$ the sheaf-theoretic image of $\Id +\sigma \colon \cA \to \cA$ and we denote by $\cAlt_{\cA,\sigma}$ the sheaf-theoretic image of $\Id-\sigma \colon \cA \to \cA$. The map $\Id+\sigma$ then descends to a well defined map $\xi \colon \cA/\cAlt_{\cA,\sigma} \to \cSymd_{\cA,\sigma}$. Lemma \ref{lem_1_in_Symd} shows that a necessary condition for $\sigma$ to be in a quadratic pair is that $1_{\cA} \in \cSymd_{\cA,\sigma}(S)$. We call $\sigma$ a \emph{locally quadratic} involution when this is satisfied. We then give the following classification in Theorem \ref{thm_classification}.\\
\begin{thm}\label{thm_A}
Let $(\cA,\sigma)$ be an Azumaya $\cO$--algebra with locally quadratic involution. Then, there is a bijection
\[
\left\{\begin{array}{c} f \text{ such that } (\sigma,f) \text{ is} \\ \text{a quadratic pair on } \cA \end{array}\right\} \leftrightarrow \xi(S)^{-1}(1_{\cA}).
\]
\end{thm}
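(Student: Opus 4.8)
The plan is to produce the bijection by exhibiting explicit maps in both directions and checking they are mutually inverse. First I recall the setup: $\xi\colon \cA/\cAlt_{\cA,\sigma}\to\cSymd_{\cA,\sigma}$ is induced by $\Id+\sigma$, and $1_\cA\in\cSymd_{\cA,\sigma}(S)$ by the locally-quadratic hypothesis. Given a class $\overline{\ell}\in\xi(S)^{-1}(1_\cA)\subseteq(\cA/\cAlt_{\cA,\sigma})(S)$, the condition $\xi(S)(\overline\ell)=1_\cA$ says precisely that any local lift $\ell$ of $\overline\ell$ satisfies $\ell+\sigma(\ell)=1$. To such a class I would associate the semi-trace $f_{\overline\ell}\colon\cSymm_{\cA,\sigma}\to\cO$ defined locally by $f_{\overline\ell}(s)=\Trd_\cA(\ell s)$. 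The two things to verify here are: (a) this is independent of the choice of lift $\ell$ of $\overline\ell$, which follows because $\Alt_{\cA,\sigma}$ is the kernel of $\Trd_\cA(\und\cdot s)$ on symmetric elements (this is the Azumaya analogue of \cite[2.3]{KMRT}, and should be available from the preliminaries, together with the fact that $\cAlt_{\cA,\sigma}$ and $\cSymm_{\cA,\sigma}$ are orthogonal complements under the reduced trace pairing); and (b) that $f_{\overline\ell}$ is genuinely a semi-trace, i.e.\ $f_{\overline\ell}(a+\sigma(a))=\Trd_\cA(a)$ for all $a$, which is a short computation using $\Trd_\cA\circ\sigma=\Trd_\cA$ and $\ell+\sigma(\ell)=1$. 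One must also check that gluing the local definitions gives a well-defined global map of sheaves, but since the local recipe is canonical once $\overline\ell$ is fixed globally, this is automatic.

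Conversely, given a semi-trace $f$ for $\sigma$, I would recover an element of $\xi(S)^{-1}(1_\cA)$. Locally, by the field case \cite[5.7]{KMRT} (or its scheme-theoretic version in \cite{CF}), $f$ is of the form $\Trd_\cA(\ell\,\und)$ for some local section $\ell$ with $\ell+\sigma(\ell)=1$; the image $\overline\ell\in(\cA/\cAlt_{\cA,\sigma})$ is independent of the chosen $\ell$ by the same reduced-trace orthogonality used above, so these local classes agree on overlaps and glue to a global section $\overline{\ell}_f\in(\cA/\cAlt_{\cA,\sigma})(S)$, which lies in $\xi(S)^{-1}(1_\cA)$ because $\xi$ sends it to $\ell+\sigma(\ell)=1_\cA$. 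That the two assignments $\overline\ell\mapsto f_{\overline\ell}$ and $f\mapsto\overline\ell_f$ are inverse to each other is then immediate from the definitions: starting from $\overline\ell$, any lift $\ell$ represents $f_{\overline\ell}$, so $\overline\ell_{f_{\overline\ell}}=\overline\ell$; and starting from $f$, the section $\overline\ell_f$ is represented locally by an $\ell$ with $f=\Trd_\cA(\ell\,\und)$, so $f_{\overline\ell_f}=f$.

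The main obstacle is not any single computation but assembling the \emph{sheaf-theoretic} bookkeeping correctly: one must know that $\cSymd_{\cA,\sigma}$, $\cAlt_{\cA,\sigma}$, $\cSymm_{\cA,\sigma}$ behave well as fppf sheaves (images taken in the sheaf sense, sections over $S$ not necessarily lifting to sections of $\cA$ over $S$), that the reduced-trace pairing identifies $\cAlt_{\cA,\sigma}$ with the annihilator of $\cSymm_{\cA,\sigma}$ locally, and that the local structure result \cite[5.7]{KMRT} indeed extends—this is where one leans on the Azumaya-local-splitting philosophy, reducing to $\End_R(M)$ over a cover where everything is classical. Provided these preliminary facts are in hand (and the statement of the theorem and the preceding lemma indicate they are), the argument itself is a formal check. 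I would also remark that when $\xi(S)^{-1}(1_\cA)$ is empty the theorem is vacuous, consistent with Lemma \ref{lem_1_in_Symd}, and when $2$ is invertible it is a singleton, recovering $f=\tfrac12\Trd_\cA$.
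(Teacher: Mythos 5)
Your proposal is correct and follows essentially the same route as the paper: the paper first proves an affine classification (using that $\cSym_{\cA,\si}$ is a direct summand over affines so $f$ extends to $\cA$ and equals $\Trd_{\cA}(\ell\,\und)$ by regularity of the trace form, with $\ell$ unique up to $\cAlt_{\cA,\si}$ by the orthogonality $\cSym_{\cA,\si}^\perp=\cAlt_{\cA,\si}$), and then globalizes exactly as you do, by gluing the local classes $\overline{\ell_i}$ into a section of the subsheaf $\xi^{-1}(1_{\cA})$ of $\cA/\cAlt_{\cA,\sigma}$ and checking the two assignments are mutually inverse locally.
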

This answers question \ref{question_ii}. Question \ref{question_i} we answer with cohomological obstructions. We consider the diagram with exact rows
\[
\begin{tikzcd}[ampersand replacement=\&]
0 \arrow{r} \& \cSkew_{\cA,\sigma} \arrow[hookrightarrow]{r} \arrow{d} \& \cA \arrow{r}{\Id+\sigma} \arrow{d} \& \cSymd_{\cA,\sigma} \arrow[equals]{d} \arrow{r} \& 0 \\
0 \arrow{r} \& \cSkew_{\cA,\sigma}/\cAlt_{\cA,\sigma} \arrow[hookrightarrow]{r} \& \cA/\cAlt_{\cA,\sigma} \arrow{r}{\xi} \& \cSymd_{\cA,\sigma} \arrow{r} \& 0
\end{tikzcd}
\]
where $\cSkew_{\cA,\sigma}$ is the kernel of $\Id+\sigma \colon \cA \to \cA$. Taking a portion of the long exact cohomology sequence we have connecting morphisms into \v{C}ech cohomology
\[
\begin{tikzcd}
\cSymd_{\cA,\sigma}(S) \arrow[equals]{d} \arrow{r}{\delta} & \check{H}^1(S,\cSkew_{\cA,\sigma}) \arrow{d} \\
\cSymd_{\cA,\sigma}(S) \arrow{r}{\delta'} & \check{H}^1(S,\cSkew_{\cA,\sigma}/\cAlt_{\cA,\sigma})
\end{tikzcd}
\]
For a locally quadratic involution we define the \emph{weak obstruction} $\delta'(1_{\cA})=\Weak(\cA,\si)\in H^1(S,\cSkew_{\cA,\si}/\cAlt_{\cA,\si})$, and the \emph{strong obstruction} $\delta(1_{\cA})=\Strong(\cA,\si)\in H^1(S,\cSkew_{\cA,\si})$. We show in Theorem \ref{prop_obstructions}:
\begin{thm}\label{thm_B}
Let $(\cA,\si)$ be an Azumaya $\cO$--algebra with a locally quadratic involution.
\begin{enumerate}[label={\rm (\roman*)}]
\item There exists a linear map $f\co \cA \to \cO$ such that $(\si,f|_{\cSym_{\cA,\si}})$ is a quadratic pair on $\cA$ if and only if $\Strong(\cA,\si)=0$. In this case $f=\Trd_{\cA}(\ell \und)$ for an element $\ell \in \cA(S)$ with $\ell+\si(\ell)=1$.
\item There exists a linear map $f\co \cSym_{\cA,\si} \to \cO$ such that $(\si,f)$ is a quadratic pair on $\cA$ if and only if $\Weak(\cA,\si)=0$.
\end{enumerate}
\end{thm}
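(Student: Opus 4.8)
The plan is to read both equivalences off the two displayed long exact sequences, the only genuinely algebraic input being the classical identities for the reduced trace of an Azumaya algebra and the nondegeneracy of its reduced trace form. Taking sections over $S$ of the top row and continuing into \v{C}ech cohomology produces an exact sequence
\[
\cA(S)\xrightarrow{\,\Id+\si\,}\cSymd_{\cA,\si}(S)\xrightarrow{\,\delta\,}\check{H}^1(S,\cSkew_{\cA,\si}),
\]
and likewise the bottom row produces an exact sequence
\[
(\cA/\cAlt_{\cA,\si})(S)\xrightarrow{\,\xi\,}\cSymd_{\cA,\si}(S)\xrightarrow{\,\delta'\,}\check{H}^1(S,\cSkew_{\cA,\si}/\cAlt_{\cA,\si}),
\]
where exactness at $\cSymd_{\cA,\si}(S)$ is the standard fact that a global section of the quotient sheaf lifts to a global section of the middle sheaf precisely when the \v{C}ech $1$--cocycle recording its local lifts is a coboundary. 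Hence $\Strong(\cA,\si)=\delta(1_{\cA})=0$ if and only if there is an $\ell\in\cA(S)$ with $\ell+\si(\ell)=1_{\cA}$, and $\Weak(\cA,\si)=\delta'(1_{\cA})=0$ if and only if $\xi(S)^{-1}(1_{\cA})\neq\varnothing$.

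Part (ii) is then immediate: by Theorem~\ref{thm_A} the fibre $\xi(S)^{-1}(1_{\cA})$ is in bijection with the set of maps $f$ for which $(\si,f)$ is a quadratic pair on $\cA$, so such an $f$ exists exactly when $\Weak(\cA,\si)=0$.

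For part (i) I would argue both directions by hand. If $\Strong(\cA,\si)=0$, fix $\ell\in\cA(S)$ with $\ell+\si(\ell)=1_{\cA}$ and put $f=\Trd_{\cA}(\ell\,\und)\colon\cA\to\cO$, an $\cO$--linear morphism of sheaves. For $T\in\Sch_S$ and $a\in\cA(T)$, combining the $\si$--invariance of $\Trd_{\cA}$, the anti--multiplicativity of $\si$, and $\Trd_{\cA}(xy)=\Trd_{\cA}(yx)$ gives $\Trd_{\cA}(\ell\,\si(a))=\Trd_{\cA}(\si(\ell)\,a)$, hence $f(a+\si(a))=\Trd_{\cA}\bigl((\ell+\si(\ell))a\bigr)=\Trd_{\cA}(a)$; since $\si$ is locally quadratic, this is exactly what is required for $(\si,f|_{\cSym_{\cA,\si}})$ to be a quadratic pair (the sheaf version of \cite[5.7]{KMRT} used in \cite{CF}). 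Conversely, given an $\cO$--linear $f\colon\cA\to\cO$ with $(\si,f|_{\cSym_{\cA,\si}})$ a quadratic pair, the nondegeneracy of the reduced trace form of the Azumaya algebra $\cA$ yields a unique $\ell\in\cA(S)$ with $f=\Trd_{\cA}(\ell\,\und)$; the identity $f(a+\si(a))=\Trd_{\cA}(a)$ and the same trace computation then give $\Trd_{\cA}\bigl((\ell+\si(\ell)-1_{\cA})a\bigr)=0$ for all $a$, so $\ell+\si(\ell)=1_{\cA}$ by nondegeneracy. Thus $1_{\cA}=(\Id+\si)(\ell)$ lies in the image of $\Id+\si$ on sections over $S$, i.e.\ $\Strong(\cA,\si)=\delta(1_{\cA})=0$, and $f=\Trd_{\cA}(\ell\,\und)$ as asserted.

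The step I expect to be the main obstacle is this converse in part (i): promoting an abstract $\cO$--linear functional $f$ to a \emph{global} element $\ell\in\cA(S)$. That relies on the perfectness of the reduced trace pairing on an Azumaya algebra, i.e.\ the isomorphism $\cA\xrightarrow{\sim}\cHom_{\cO}(\cA,\cO)$, $x\mapsto\Trd_{\cA}(x\,\und)$. Everything else is bookkeeping in the two long exact sequences together with the routine trace identities, all of which hold functorially over $\Sch_S$.
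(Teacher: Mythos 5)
Your proposal is correct and follows essentially the same route as the paper: part (ii) is read off the bottom long exact sequence together with Theorem~\ref{thm_classification}, and part (i) comes from exactness of $\cA(S)\to\cSymd_{\cA,\si}(S)\to\check{H}^1(S,\cSkew_{\cA,\si})$ plus the correspondence between extendable $f$ and elements $\ell$ with $\ell+\si(\ell)=1$. The only cosmetic difference is that you inline the proof of the paper's Lemma~\ref{lem_extend_l} (using regularity of the reduced trace form, Lemma~\ref{lem_perp}\ref{lem_perp_i}) where the paper simply cites it.
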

This result is independent of any characteristic assumptions, however these obstructions are both trivial if $2$ is invertible. Therefore, the interest of this theory is when one works without that assumption. Furthermore, these obstructions capture global phenomenon, since locally, for instance over an affine scheme, both obstructions are zero. We prove in Section \ref{obs_examples} that these obstructions are non-trivial in general.

In the sequel to this paper, \cite{GNR}, we will use the results of this paper to establish an equivalence between the groupoids of adjoint semisimple groups of type $A_1 \times A_1$ and of type $D_2$ over a scheme $S$, extending \cite[15.7]{KMRT}.

\subsection*{Contents}
In Section \ref{sec_prelim} we review the technicalities of our sheaf point of view. Section \ref{quamo} reviews the notion of bilinear and quadratic forms on $\cO$--modules, in particular introducing the adjoint anti-automorphism associated with a regular bilinear form. Such adjoint anti-automorphisms are actually involutions when the bilinear form is suitably symmetric.

Section \ref{quad_pairs} defines quadratic pairs, gives examples, and describes procedures for constructing quadratic pairs. In the case when $\si$ is adjoint to a regular symmetric bilinear form $b$, Proposition \ref{qpex} connects the existence of a quadratic pair on $(\cEnd_{\cO}(\cM),\si)$ to the existence of a quadratic form whose polar form is $b$. The second half of Section \ref{quad_pairs} contains the proof of Theorem \ref{thm_A}.

Section \ref{tens} is the generalization of \cite[5.18, 5.20]{KMRT}, which deals with quadratic pairs on tensor products of algebras with involution.

Section \ref{obs} introduces the strong and weak obstructions and culminates in the proof of Theorem \ref{thm_B}.

Finally, Section \ref{obs_examples} contains various examples of Azumaya algebras with locally quadratic involutions such that their obstructions are non-trivial. In Section \ref{example_strong} we construct an Azumaya algebra with quadratic pair, and therefore trivial weak obstruction, which we show in Lemma \ref{lem_mukai} has non-trivial strong obstruction. Additionally, in Section \ref{example_weak} we construct a quaternion algebra with orthogonal involution which by Lemma \ref{lem_serre} has non-trivial weak obstruction, and therefore also non-trivial strong obstruction. The two previous examples are constructed over a field of characteristic $2$. In Section \ref{sec_weird_example} we construct an example of a locally quadratic quaternion algebra and show in Lemmas \ref{lem_example_3_nontrivial} and \ref{lem_ex_base_change_trivial} that both obstructions are non-trivial while the obstructions of the base change to $\Spec(\FF_2)$ are both trivial. This demonstrates that considering schemes where $2$ is neither invertible nor zero is an important part of the theory.

The authors would also like to thank the referee, whose careful reading and comments helped us improve this paper.
}

\section{Preliminaries}\label{sec_prelim}
Throughout this paper $S$ is a scheme with structure sheaf $\calO_S$. Following the style of \cite{CF}, we consider ``objects over $S$'' as sheaves on the category of schemes over $S$ equipped with the fppf topology. Below, we explain this viewpoint for the notions that are most important for this paper.
\smallskip

\subsection{The Site $\Sch_S$}\label{sec_ringed_site}
Let $\Sch_S$ be the big fppf site of $S$ as in \cite[Expos\'e IV]{SGA3}. Recall that its objects are schemes with structure morphism $T\to S$ and a cover of some $T\in \Sch_S$ is a family $\{T_i \to T\}_{i\in I}$ of flat morphisms which are locally of finite presentation and are jointly surjective. When considering a cover, we will use the notation $T_{ij}=T_i \times_T T_j$ for $i,j\in I$. When a scheme is affine we will usually denote it by $U$, and so an affine cover will usually be written $\{U_i \to T\}_{i\in I}$ with $U_{ij}=U_i \times_T U_j$ for $i,j\in I$. We warn that $U_{ij}$ may not be affine since $T$ need not be separated.

\begin{remark}
In \cite[Tag 021S]{Stacks}, the Stacks project authors introduce ``a'' big fppf site of $S$, instead of ``the''. They do so because of set theoretic nuances avoided in \cite{SGA3} through the use of universes. We also avoid such difficulties and work with ``the'' big fppf site.
\end{remark}

If $\calF$ is any sheaf on $\Sch_S$ and $T\in \Sch_S$, we denote by $\calF|_T$ the restriction of the sheaf to the site $\Sch_S/T = \Sch_T$. We use the same notation for the restriction of elements, namely if $f \co V \to T$ is a morphism in $\Sch_S$ and $t\in \calF(T)$, then we write $t|_V = \calF(f)(t)$. If we have multiple morphisms between $V$ and $T$, we may write $t|_f$ for $\cF(f)(t)$. This overlap of notation is borrowed from \cite{Stacks}. Since these operations apply only to sheaves or sections respectively, the meaning of restriction will be clear from the context.

\subsection{Cohomologies}
We will make use of the first non-abelian cohomology set for sheaves of groups on $\Sch_S$. However, there are various relevant cohomology theories whose $H^1$ are isomorphic. We clarify this now.

First, recall from the discussion before \cite[III.4.5]{M} that for a sheaf of groups $\cG \co \Sch_S \to \Grp$ and $T\in \Sch_S$, the first \v{C}ech cohomology is
\[
\check{H}^1(T,\calG) = \colim_{\calU=\{T_i \to T\}_{i\in I}} \check{H}^1(\calU,\calG)
\]
where the colimit is over fppf covers of $T$. The \v{C}ech cohomology relative to a cover is
\[
\check{H}^1(\calU,\calG)=\{(g_{ij})_{i,j\in I} \in \prod_{i,j\in I} \cG(T_{ij}) \mid (g_{ij}|_{T_{ijk}})(g_{jk}|_{T_{ijk}})=(g_{ik}|_{T_{ijk}})\}/\sim
\]
where the equivalence relation $\sim$ says that cocycles $(g_{ij})_{i,j\in I}$ and $(g_{ij}')_{i,j\in I}$ are cohomologous if there exists $(h_k)\in \prod_{k\in I}\cG(T_k)$ such that we have $g_{ij}'=(h_i|_{T_{ij}})g_{ij}(h_j|_{T_{ij}})^{-1}$. This is a pointed set with distinguished element $[(1)_{i,j\in I}]$. The following lemma describes the connecting morphism for this cohomology.

\begin{lem}[{\cite[III.4.5]{M}}]\label{app_connecting_image}
Let $1 \to \calG' \to \calG \overset{\pi}{\to} \calG'' \to 1$ be an exact sequence of sheaves of groups on $\Sch_S$. Then, there is an exact sequence of pointed sets
\[
1 \to \cG'(S) \to \cG(S) \to \cG''(S) \xrightarrow{\check{\delta}} \check{H}^1(S,\cG') \to \check{H}^1(S,\cG) \to \check{H}^1(S,\cG'')
\]
where $\check{\delta}(g'') = \big[\big((g_i|_{T_{ij}})^{-1}(g_j|_{T_{ij}})\big)_{i,j\in I}\big]$ for a section $g''\in \cG''(S)$ and any cover $\cU = \{T_i \to S\}_{i\in I}$ and elements $g_i \in \cG(T_i)$ such that $\pi(g_i)=g''|_{T_i}$.
\end{lem}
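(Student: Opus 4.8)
The statement is the non-abelian version of the long exact cohomology sequence, and up to the bookkeeping of fppf covers it is the content of \cite[III.4.5]{M}; since the explicit formula for $\check\delta$ is what the rest of the paper uses, the plan is to record its construction and then verify exactness one term at a time. First I would build $\check\delta$. Given $g'' \in \cG''(S)$, use that $\pi$ is an epimorphism of fppf sheaves to pick a cover $\cU = \{T_i \to S\}_{i\in I}$ and sections $g_i \in \cG(T_i)$ with $\pi(g_i) = g''|_{T_i}$; set $c_{ij} = (g_i|_{T_{ij}})^{-1}(g_j|_{T_{ij}})$; note $\pi(c_{ij}) = 1$, so $c_{ij} \in \cG'(T_{ij})$ because $\cG' = \ker\pi$ and kernels of sheaf maps are computed sectionwise; and check $c_{ij}c_{jk} = c_{ik}$ over $T_{ijk}$ by cancellation, so that $(c_{ij})$ is a $1$-cocycle in $\cG'$ for $\cU$. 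Define $\check\delta(g'') = [(c_{ij})]$. For well-definedness: two lifts of $g''$ over a common refinement differ by $k_i := g_i^{-1}g_i' \in \cG'(T_i)$, whence $c_{ij}' = k_i^{-1}c_{ij}k_j$ and the cocycles are cohomologous; passage through the colimit over covers is immediate. By construction $\check\delta(1) = [(1)]$.

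Then I would check exactness term by term. At $\cG'(S)$ and $\cG(S)$ this is just left-exactness of global sections applied to $1 \to \cG' \to \cG \to \cG''$. At $\cG''(S)$: if $g'' = \pi(g)$, the one-element cover with lift $g$ gives $\check\delta(g'') = [(1)]$; conversely, if $\check\delta(g'') = [(1)]$, choose (after refining) a cover and lifts $g_i$ with $c_{ij} = (h_i|_{T_{ij}})(h_j|_{T_{ij}})^{-1}$, $h_i \in \cG'(T_i)$, so that the sections $g_ih_i \in \cG(T_i)$ agree on overlaps, glue to some $g \in \cG(S)$ by the sheaf property of $\cG$, and satisfy $\pi(g) = g''$ since $\pi(h_i) = 1$ and $\cG''$ is separated. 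At $\check H^1(S,\cG')$: the cocycle $(g_i^{-1}g_j)$ coming from $\check\delta(g'')$ is trivialized in $\cG$ by the $g_i$, and conversely a $\cG'$-cocycle $(c_{ij})$ that is trivial in $\check H^1(S,\cG)$ can, after refining, be written $c_{ij} = (g_i|_{T_{ij}})^{-1}(g_j|_{T_{ij}})$ for some $g_i \in \cG(T_i)$; applying $\pi$ then shows the $\pi(g_i)$ glue to a section $g'' \in \cG''(S)$ with $\check\delta(g'') = [(c_{ij})]$. At $\check H^1(S,\cG)$: a $\cG'$-valued cocycle maps to the trivial cocycle in $\cG''$; conversely, given a $\cG$-cocycle $(d_{ij})$ with $(\pi(d_{ij}))$ trivial in $\check H^1(S,\cG'')$, refine so that $\pi(d_{ij}) = (e_i|_{T_{ij}})^{-1}(e_j|_{T_{ij}})$, refine once more using surjectivity of $\pi$ so that each $e_i$ lifts to $\tilde e_i \in \cG(T_i)$, and replace $(d_{ij})$ by the cohomologous cocycle $\tilde e_i d_{ij} \tilde e_j^{-1}$, which now takes values in $\cG' = \ker\pi$.

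The one point I would be careful about is exactly this refinement bookkeeping: because $\check H^1(S,-)$ is a colimit over fppf covers, each ``conversely'' direction only supplies the required coboundary relation after passing to some refinement, and one must track the refinement maps when transporting cocycles between covers. With that understood, every individual verification is a short manipulation of the cocycle and coboundary identities, using nothing beyond the sheaf axioms for $\cG$ and $\cG''$ and the identity $\cG'(T) = \ker(\cG(T) \to \cG''(T))$ for all $T \in \Sch_S$.
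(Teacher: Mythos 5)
Your proof is correct and is the standard argument: the paper gives no proof of this lemma at all, simply citing \cite[III.4.5]{M}, and your construction of $\check{\delta}$ together with the term-by-term exactness checks (using only the sheaf axioms, $\cG'=\Ker\pi$, and refinement of covers) is exactly the argument contained in that reference. The only point to watch is that your coboundary convention (e.g.\ writing $c_{ij}=h_i h_j^{-1}$ where the paper's relation $g_{ij}'=h_i g_{ij} h_j^{-1}$ would give $c_{ij}=h_i^{-1}h_j$) differs from the paper's by an inverse, which is harmless but should be made consistent throughout.
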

When this is applied to an exact sequence of abelian sheaves $0\to \cF' \to \cF \to \cF'' \to 0$ we will write $\check{\delta}(f'')=[(f_j|_{T_{ij}}-f_i|_{T_{ij}})_{i,j\in I}]$.

Second, we have the non-abelian cohomology of Giraud. Definition \cite[III.2.4.2]{Gir} interpreted in our setting defines $H^1(S,\cG)$ to be the set of isomorphism classes of $\cG$--torsors for the fppf site on $\Sch_S$. It is also a pointed set with base point given by the class of the trivial torsor. There is similarly a connecting morphism for this cohomology.
\begin{lem}[{\cite[III.3.1.3, III.3.3.1]{Gir}}]\label{lem_torsor_connecting}
Let $1 \to \calG' \to \calG \overset{\pi}{\to} \calG'' \to 1$ be an exact sequence of sheaves of groups on $\Sch_S$. Then, there is an exact sequence of pointed sets
\[
1 \to \cG'(S) \to \cG(S) \to \cG''(S) \xrightarrow{\delta} H^1(S,\cG') \to H^1(S,\cG) \to H^1(S,\cG'')
\]
where $\delta(g'') = [\pi^{-1}(g'')]$ for any $g''\in \cG''(S)$. Here, by $\pi^{-1}(g'')$ we mean the subsheaf of $\cG$ given by
\begin{align*}
\pi^{-1}(g'') \colon \Sch_S &\mapsto \Sets \\
T &\mapsto \pi(T)^{-1}(g''|_T).
\end{align*}
\end{lem}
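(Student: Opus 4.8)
The plan is to prove both statements—the exactness of the six-term sequence of pointed sets and the identification of $\delta$—by the standard torsor-theoretic computation, simply carried out in the fppf topos $\Sch_S$ rather than over a field. First I would verify that $\pi^{-1}(g'')$, as defined, is indeed a sheaf on $\Sch_S$: it is the preimage under the map of sheaves $\pi\colon \cG \to \cG''$ of the section $g''\colon \ast \to \cG''$, hence a fibre product of sheaves, so it is a subsheaf of $\cG$. Next I would check that $\cG'$ acts on $\pi^{-1}(g'')$ by left translation—this uses only that $\cG'=\Ker(\pi)$—and that this action is simply transitive \emph{locally}: given sections $x,y$ of $\pi^{-1}(g'')$ over $T$, the element $x^{-1}y$ lies in $\cG'(T)$ and is the unique element carrying $x$ to $y$, while for existence of a local section of $\pi^{-1}(g'')$ one uses surjectivity of $\pi$ as a map of sheaves, which by definition means that fppf-locally on any $T$ the section $g''|_T$ lifts to $\cG$. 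Thus $\pi^{-1}(g'')$ is a $\cG'$-torsor, so $\delta(g'') := [\pi^{-1}(g'')]$ is a well-defined element of $H^1(S,\cG')$, and $\delta$ sends the base point $1$ to the class of $\pi^{-1}(1)=\cG'$, the trivial torsor.

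Then I would establish exactness at each of the four places $\cG(S)$, $\cG''(S)$, $H^1(S,\cG')$, $H^1(S,\cG)$. Exactness at $\cG(S)$ is immediate since $\cG' = \Ker(\pi)$ on sections. At $\cG''(S)$: if $g''$ lifts to some $g\in\cG(S)$ then $g$ is a global section of $\pi^{-1}(g'')$, trivializing it, so $\delta(g'')=1$; conversely a trivialization of $\pi^{-1}(g'')$ is exactly a global section, i.e. a lift of $g''$. At $H^1(S,\cG')$: the image of $\delta$ consists of those $\cG'$-torsors $P$ such that the pushout $P\wedge^{\cG'}\cG$ (equivalently, the image of $[P]$ in $H^1(S,\cG)$) is trivial—one direction because $\pi^{-1}(g'')\wedge^{\cG'}\cG \simeq \cG$ canonically via multiplication, the other because a trivialization of $P\wedge^{\cG'}\cG$ lets one recover a section $g''$ of $\cG''$ with $\pi^{-1}(g'')\simeq P$. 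At $H^1(S,\cG)$: a $\cG$-torsor $Q$ has trivial image in $H^1(S,\cG'')$ iff $Q\wedge^{\cG}\cG''$ admits a global section, and such a section is a $\cG$-equivariant map $Q\to\cG''$, whose fibre over $1$ is a $\cG'$-torsor $P$ with $P\wedge^{\cG'}\cG\simeq Q$. Finally I would verify the formula for $\check\delta$ claimed in Lemma \ref{app_connecting_image} is compatible with this $\delta$ under the canonical comparison map $\check H^1 \to H^1$, by choosing a cover $\cU=\{T_i\to S\}$ over which $\pi^{-1}(g'')$ trivializes, picking local sections $g_i\in\cG(T_i)$ lifting $g''|_{T_i}$, and observing that the transition cocycle of the torsor $\pi^{-1}(g'')$ relative to these trivializations is exactly $\big((g_i|_{T_{ij}})^{-1}(g_j|_{T_{ij}})\big)_{i,j}$, which lands in $\cG'$.

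The only genuinely non-routine point is making sure every step is valid \emph{sheaf-theoretically} on the big fppf site: ``surjective'' must be read as epimorphism of fppf sheaves (hence only fppf-local liftability, not liftability on sections), the associated-sheaf functor must be invoked when forming contracted products $P\wedge^{\cG'}\cG$ and the quotient sheaves in question, and one must know these torsors are representable or at least that the relevant constructions stay within sheaves—here all of this is standard and is precisely the content of \cite[Ch. III]{Gir}, so in the write-up I would simply cite \cite[2.4.2, 3.1.3, 3.3.1]{Gir} and indicate that our $\pi^{-1}(g'')$ agrees with Giraud's description of the connecting map. The main obstacle, such as it is, is therefore purely expository: packaging Giraud's general formalism so that the concrete cocycle formula matching Lemma \ref{app_connecting_image} falls out, which is what we need for the applications later in the paper.
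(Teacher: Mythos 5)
Your proposal is correct and matches the paper's treatment: the paper simply cites Giraud [3.1.3, 3.3.1] and adds the remark that $\pi^{-1}(g'')$ is a coset of $\cG'$ in $\cG$, hence (being fppf-locally non-empty by surjectivity of $\pi$) a $\cG'$--torsor, which is exactly the content you verify in detail before running the standard exactness checks and the \v{C}ech comparison. One cosmetic point: you describe the action as left translation but compute with $x^{-1}y$, which is the right-translation convention (and the one matching the cocycle formula in Lemma \ref{app_connecting_image}); either works, but the write-up should pick one consistently.
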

Since $\pi^{-1}(g'')$ is a coset of the subgroup $\cG'$ in the group $\cG$, there is a natural simply transitive action of $\cG'$ on $\pi^{-1}(g'')$. By surjectivity of $\cG \to \cG''$, there will be a cover over which $\pi^{-1}(g'')$ is non-empty. Therefore, it is in fact a $\cG'$--torsor and so its isomorphism class is in $H^1(S,\cG')$ as claimed. The maps between the cohomology sets are induced from the corresponding map between groups via the contracted product construction of \cite[III.1.3]{Gir}.

Despite not being a long exact cohomology sequence in the true sense since the groups may be non-abelian, we will nonetheless refer to the exact sequences in Lemmas \ref{app_connecting_image} and \ref{lem_torsor_connecting} as long exact sequences.

By \cite[III.3.6.4, III.3.6.5]{Gir} (or by \cite[III.4.6]{M} where the inverse map is constructed) there is an isomorphism of pointed sets $\check{H}^1(S,\cG) \iso H^1(S,\cG)$, and it is easy to see that this isomorphism is compatible with the connecting morphisms described above. In particular the diagram
\[
\begin{tikzcd}
\cG''(S) \arrow[equals]{d} \arrow{r}{\check{\delta}} & \check{H}^1(S,\cG) \arrow{d}{\rotatebox{90}{$\sim$}} \\
\cG''(S)  \arrow{r}{\delta} & H^1(S,\cG)
\end{tikzcd}
\]
commutes. Because of this, we will simply use $\delta$ to denote both connecting morphisms.

Finally, we note that when dealing with an abelian sheaf $\cF \colon \Sch_S \to \Ab$, there is also the derived functor cohomology as in \cite[01FT]{Stacks}, which we denote $H^1\fppf(S,\cF)$. By \cite[III, 2.10]{M}, there is an isomorphism $H^1\fppf(T,\calF)\cong \check{H}^1(T,\calF)$.
\sm

\subsection{$\cO$--Modules}\label{sec_O_modules}
By \cite[Tag 03DU]{Stacks}, the contravariant functor
\begin{align*}
\cO \co \Sch_S &\to \Rings \\
T &\mapsto \cO_T(T)
\end{align*}
where $\Rings$ is the category of commutative rings, is an fppf-sheaf on $\Sch_S$, making $(\Sch_S,\cO)$ a ringed site in the sense of \cite[Tag 03AD]{Stacks}. We call $\cO$ the \emph{structure sheaf}. If $T\to S$ is an open immersion, then $\cO(T)=\cOS(T)$. From \cite[Tag 03CW]{Stacks}, an $\cO$--module is then an fppf-sheaf $\cM \co \Sch_S \to \Ab$ together with a map of sheaves
\[
\cO \times \cM \to \cM
\]
such that for each $T\in \Sch_S$, the map $\cO(T)\times \cM(T) \to \cM(T)$ makes $\cM(T)$ a usual $\cO(T)$--module. Given two $\cO$--modules $\cM$ and $\cN$, their internal homomorphism functor
\begin{align*}
\cHom_{\cO}(\cM,\cN)\co \Sch_S &\to \Ab \\
T &\mapsto \Hom_{\cO|_T}(\cM|_T,\cN|_T)
\end{align*}
is another $\cO$--module by \cite[03EM]{Stacks}.

We refer to \cite[Tags 03DE, 03DL]{Stacks} for definitions of various properties of $\cO$--modules. In particular, we will make use of modules which are locally free and/or of finite type, therefore we present these definitions for the convenience of the reader.

An $\cO$--module $\cM$ is called \emph{locally free} if for all $T\in \Sch_S$, there is a covering $\{T_i \to T\}_{i\in I}$ such that for each $i\in I$, the restriction $\cM|_{T_i}$ is a free $\cO|_{T_i}$--module. Explicitly, $\cM|_{T_i} \cong \bigoplus_{j\in J_i}\cO|_{T_i}$ for some index set $J_i$. If all $J_i$ have the same cardinality then $\cM$ has \emph{constant rank $|J_i|$}. A locally free $\cO$--module of constant rank $1$ is called a \emph{line bundle}. Isomorphism classes of line bundles form a group under tensor product, denoted $\Pic(S)$. Since the category of line bundles is equivalent to the category of $\GG_m$--torsors over $S$ \cite[prop. 2.4.3.1]{CF}, the group $\Pic(S)$ is isomorphic to the group $H^1(S,\GG_m)$. It is also  isomorphic to the group $H^1\Zar(S, \GG_m)$ because $\GG_m$--torsors are locally trivial for the Zariski topology.

An $\cO$--module $\cM$ is called \emph{of finite type} if for all $T\in \Sch_S$, there is a covering $\{T_i \to T\}_{i\in I}$ such that for each $i\in I$, the restriction $\cM|_{T_i}$ is an $\cO|_{T_i}$--module which is generated by finitely many global sections. That is, there exists a surjection of $\cO|_{T_i}$--modules $\cO^{\oplus n_i} \to \cM$ for some $n_i\in \NN$.

An $\cO$--module $\cM$ is called \emph{of finite presentation} if for all $T\in \Sch_S$, there is a covering $\{T_i \to T\}_{i\in I}$ such that for each $i\in I$, the restriction $\cM|_{T_i}$ is an $\cO|_{T_i}$--module which has a finite global presentation. That is, there exists an exact sequence of $\cO|_{T_i}$--modules
\[
\bigoplus_{j\in J_i} \cO|_{T_i} \to \bigoplus_{k\in K_i} \cO|_{T_i} \to \cE|_{T_i} \to 0
\]
for some finite index sets $J_i$ and $K_i$.

We will use the terminology \emph{finite locally free} to mean locally free and of finite type. Since $S \in \Sch_S$ is a final object, \cite[Tag 03DN]{Stacks} applies and it suffices for us to check local conditions, such as the three detailed above or quasi-coherence detailed below, for an fppf-covering of $S$.

We call an $\cO$--submodule $\cN \subset \cM$ a \emph{direct summand} of $\cM$ if there exists another $\cO$--module $\cN'$ such that $\cM=\cN\oplus \cN'$. We say that $\cN$ is \emph{locally a direct summand} if there exists a cover $\{T_i\to S\}_{i\in I}$ such that each $\cN|_{T_i}$ is a direct summand of $\cM|_{T_i}$.

We associate with an endomorphism $\si$ of an $\cO$--module $\cM$ satisfying $\si^2 = \Id_{\calM}$ the following $\cO$--modules:
\begin{align*}
 \cSym_{\calM,\si} &= \Ker(\Id - \si) \quad \text{({\em symmetric elements}),} \\
 \cAlt_{\calM,\si} &= \Ima(\Id - \si) \quad \text{({\em alternating elements}),}\\
\cSkew_{\calM,\si} &= \Ker(\Id + \si) \quad \text{({\em skew-symmetric elements}), }\\
 \cSymd_{\calM,\si} &= \Ima(\Id + \si) \quad \text{({\em symmetrized elements}),}
\end{align*}
where $\Ima(\und)$ is the image fppf-sheaf. Since $\cAlt_{\cM,\sigma} \subseteq \cSkew_{\cA,\sigma}$, the map $\Id+\sigma$ descends to a well defined map $\xi \colon \cM/\cAlt_{\cM,\sigma} \to \cSymd_{\cM,\sigma}$. We then have a large diagram with exact rows and columns which we reference later in Sections \ref{quad_pairs} and \ref{obs}.
\begin{equation}\label{eq_big_diagram}
\begin{tikzcd}[ampersand replacement=\&,row sep=0.15in, column sep=0.2in]
 \& 0 \arrow{d} \& 0 \arrow{d} \& \& \\
0 \arrow{r} \& \cAlt_{\cM,\sigma} \arrow[equals]{r} \arrow{d} \& \cAlt_{\cM,\sigma} \arrow{r} \arrow{d} \& 0 \arrow{d} \& \\
0 \arrow{r} \& \cSkew_{\cM,\sigma} \arrow{d} \arrow{r} \& \cM \arrow{d} \arrow{r}{\Id+\sigma} \& \cSymd_{\cM,\sigma} \arrow[equals]{d} \arrow{r} \& 0 \\
0 \arrow{r} \& \cSkew_{\cM,\sigma}/\cAlt_{\cM,\sigma} \arrow{d} \arrow{r} \& \cM/\cAlt_{\cM,\sigma} \arrow{d} \arrow{r}{\xi} \& \cSymd_{\cM,\sigma} \arrow{d} \arrow{r} \& 0 \\
 \& 0 \& 0 \& 0 \&
\end{tikzcd}
\end{equation}

\subsection{Quasi-coherent Modules}\label{quasi_coh}
From \cite[Tag 03DK]{Stacks}, an $\cO$--module $\cE$ is called \emph{quasi-coherent} if for all $T\in \Sch_S$ there is a covering $\{T_i \to T\}_{i\in I}$ such that for each $i\in I$, $\cE|_{T_i}$ has a global presentation. That is there is an exact sequence of $\cO|_{T_i}$--modules
\[
\bigoplus_{j\in J_i} \cO|_{T_i} \to \bigoplus_{k\in K_i} \cO|_{T_i} \to \cE|_{T_i} \to 0
\]
for some index sets $J_i$ and $K_i$. Thanks to \cite[Tag 03OJ]{Stacks}, in our context the classical notion of a quasi-coherent sheaf on $S$ and the notion of a quasi-coherent $\cO$--module are essentially equivalent. Given an $\cO$--module $\cM$ we denote by $\cM_{\mathrm{small}}$ its restriction to the small Zariski site of $S$, which gives a classical $\cO_S$--module on the open subsets of $S$. If $\cE$ is a quasi-coherent $\cO$--module, then $\cE_{\mathrm{small}}$ is a classical quasi-coherent $\cO_S$--module. Conversely, given a classical quasi-coherent $\cO_S$--module $E$ on the small Zariski site, we define
\begin{align*}
E\fppf \colon \Sch_S &\to \Ab \\
(f\colon T\to S) &\mapsto f^*(E)(T) 
\end{align*}
which is a quasi-coherent $\cO$--module on the big fppf site. By \cite[Tag 03DX]{Stacks} (where they use the notation $E^a$ for $E\fppf$), this provides an equivalence of categories between the category of classical quasi-coherent $\cO_S$--modules and the category of quasi-coherent $\cO$--modules. However, we warn that this equivalence does not extend to the categories of all modules. In particular, there exists $\cO$--modules $\cM$ which are not quasi-coherent but for which $\cM_{\mathrm{small}}$ is quasi-coherent. Of course then $(\cM_{\mathrm{small}})\fppf \not\cong \cM$. Some examples are mentioned below in Example \ref{ex_kernel_not_qc}.

We also consider the dual of a quasi-coherent $\cO$--module $\cE$, defined to be $\cE\ch = \cHom_{\cO}(\cE,\cO)$. The functor $\cE\ch$ is represented by the affine $S$--scheme $\uV(\cE)=\Spec\bigl( \mathrm{Sym}^\bullet(\cE_{\mathrm{small}})\bigr)$ by \cite[I, 4.6.3.1]{SGA3}.
If $\cE_{\mathrm{small}}$ is of finite type, then $\uV(\cE)$ is of finite type as an $S$--scheme, \cite[I, 9.4.11]{EGA-neu}.

If $\cE$ is finite locally free, then so is $\cEnd(\cE) = \cHom_{\cO}(\cE,\cE)\cong \cE\ch \ot_{\calO} \cE$. As in \cite[I, 9.6.2]{EGA-neu}, we view its associated scheme $\uV\big( \cEnd(\cE)\big)$ as an $S$--scheme of unital associative algebras. Moreover, the functor
\begin{align*}
\GL(\cE)\co \Sch_S &\to \Grp \\
T &\mapsto \Aut_{\calO|_T}(\cE|_{T})
\end{align*}
is representable by an open $S$--subscheme of $\uV\big(\cEnd(\cE)\big)$, denoted $\uGL(\cE)$, \cite[I, 9.6.4]{EGA-neu}.
\sm

A convenient property of quasi-coherent $\cO$--modules and kernels of morphisms between such modules, is that their cohomology vanishes on affine schemes.
\begin{lem}\label{lem_qc_affine_cohom}
Let $\cM$ be a quasi-coherent $\cO$--module and $U\in \Sch_S$ an affine scheme. Let $i>0$.
\begin{enumerate}[label={\rm(\roman*)}]
\item\label{lem_qc_affine_cohom_i} We have $H^i\fppf(U,\cM)=0$.
\item\label{lem_qc_affine_cohom_ii} Let $\varphi \colon \cM \to \cM'$ be a morphism between two $\cO$--modules and let $\cK$ be its kernel. If $\cM'$ is also quasi-coherent, then $H^i\fppf(U,\cK)=0$.
\end{enumerate}
\end{lem}
\begin{proof}
\noindent\ref{lem_qc_affine_cohom_i}: Since quasi-coherence is a local property, $\cM|_U$ is also quasi-coherent, and therefore corresponds to a small quasi-coherent module $(\cM|_U)_{\mathrm{small}}$ on $U$. Then, since $\big((\cM|_U)_{\mathrm{small}}\big)\fppf \iso \cM|_U$ we may apply \cite[Tag 03P2]{Stacks} which says that
\[
H^i\fppf(U,\cM|_U) = H^i(U,(\cM|_U)_{\mathrm{small}})
\]
where the right hand side is the usual sheaf cohomology. However, since $U$ is affine, \cite[Tag 01XB]{Stacks} says that the right hand side is zero, and therefore
\[
H^i\fppf(U,\cM) = H^i\fppf(U,\cM|_U) = 0
\]
as claimed.

\noindent\ref{lem_qc_affine_cohom_ii}: Since we may first restrict to the affine scheme $U$, we may assume $S$ is affine. By definition, for a scheme $T\in \Sch_S$, we have that $\cK(T)$ is the kernel of $\varphi(T)\colon \cM(T) \to \cM'(T)$. In particular, $\cK_{\mathrm{small}}$ is the kernel of the related morphism $\varphi_{\mathrm{small}} \colon \cM_{\mathrm{small}} \to \cM'_{\mathrm{small}}$ between small quasi-coherent $\cO_S$--modules. Therefore, as is well-known, for example by \cite[7.19(1)]{GW}, $\cK_{\mathrm{small}}$ is a quasi-coherent $\cO_S$--module also. Going back up to the big fppf site, we obtain a quasi-coherent $\cO$--module $\cK' = (\cK_{\mathrm{small}})\fppf$.

Here, we briefly use the small flat site over $S$, denoted by $\frS\fppf$. It is the site whose underlying category consists of schemes over $S$ whose structure morphism is flat and locally of finite presentation, morphisms are also flat and locally of finite presentation, and where the covers are fppf covers as usual. Note, this subcategory $\frS\fppf \subset \Sch_S$ satisfies the assumption of \cite[III.3.1]{M}. As in \cite{M}, we have a morphism of sites $f\colon \Sch_S \to \frS\fppf$ induced by the identity on $S$, alternatively, induced by the inclusion functor $\frS\fppf \to \Sch_S$ of the underlying categories. One can compute that for a sheaf $\cF$ on $\Sch_S$ the pushforward $f_*(\cF)$ is simply the restriction of the sheaf to objects in $\frS\fppf$.

We claim that there is an isomorphism $f_*(\cK)\iso f_*(\cK')$, i.e., that these sheaves are isomorphic on the small flat site. For a flat morphism $g\colon S' \to S$, as a special case of \cite[(7.18.1)]{GW}, we know that the pullback functor $g^*\colon \QCoh(\cO_S) \to \QCoh(\cO_{S'})$ is exact. Therefore, the small quasi-coherent $\cO_{S'}$--module $g^*(\cK_{\mathrm{small}})$ is the kernel of the morphism $g^*(\cM_{\mathrm{small}}) \to g^*(\cM'_{\mathrm{small}})$. However, the global sections of this morphism are canonically isomorphic to the morphism $\cM(S') \to \cM'(S')$, and therefore
\[
\cK'(S') = g^*(\cK_{\mathrm{small}})(S') \iso \cK(S')
\]
as claimed.

Now we apply \cite[III.3.1]{M}. Denoting cohomology on the small flat site by $H^i_{\mathrm{sfppf}}$, we have
\[
H^i\fppf(S,\cK) \simla H^i_{\mathrm{sfppf}}(S,f_*(\cK)) \simla H^i_{\mathrm{sfppf}}(S,f_*(\cK')) \iso H^i\fppf(S,\cK').
\]
where the middle isomorphism follows from the paragraph above. But now $\cK'$ is a quasi-coherent $\cO$--module and we have assumed $S$ is affine, so all these sets are zero by \ref{lem_qc_affine_cohom_i}. Hence, going back to the original affine $U\in \Sch_S$, we conclude that $H^i\fppf(U,\cK)=0$ as desired.
\end{proof}

\begin{remark}\label{rem_not_quasicoh}
In the classical situation, the inclusion functor $\QCoh(\cO_S) \to \fMod(\cO_S)$ from small quasi-coherent sheaves into $\cO_S$--modules is exact and thus preserves kernels. However, on the fppf site the inclusion functor $\QCoh_{\cO} \to \fMod_{\cO}$ from quasi-coherent $\cO$--modules into all $\cO$--modules is no longer exact. Of course, $\QCoh_{\cO}$ is an abelian category because it is equivalent to $\QCoh(\cO_S)$ by \cite[Tag 03DX]{Stacks}. Given a morphism $\varphi \colon \cM_1 \to \cM_2$ in $\QCoh_{\cO}$, the kernel with respect to the abelian category structure will be $K\fppf$ where $K = \Ker(\varphi_{\mathrm{small}}\colon (\cM_1)_{\mathrm{small}} \to (\cM_2)_{\mathrm{small}})$ is the kernel of the morphism between the classical quasi-coherent modules in $\QCoh(\cO_S)$. However, the kernel of $\varphi$ computed just as a morphism of $\cO$--modules may differ from $K\fppf$. In particular, there are examples where the module kernel of a morphism between two quasi-coherent $\cO$--modules is not quasi-coherent, see Example \ref{ex_kernel_not_qc} below. This motivates our wording in Lemma \ref{lem_qc_affine_cohom}\ref{lem_qc_affine_cohom_ii}.
\end{remark}
\sm
Given a quasi-coherent $\cO$--module $\cM$ and $\si \in \cEnd_{\cO}(\cM)$ of order $2$, we will apply Lemma \ref{lem_qc_affine_cohom} to the $\cO$--modules $\cSym_{\cM,\si}$ and $\cSkew_{\cM,\si}$, which are kernels of a morphism $\cM \to \cM$. They are not quasi-coherent in general, as the following example shows.
\begin{example}\label{ex_kernel_not_qc}
Let $S=\Spec(\ZZ)$ and consider the split quaternion algebra $\Mat_2(\cO)$. We equip this with the orthogonal involution $\sigma$ given by
\[
\sigma\left(\begin{bmatrix} a & b \\ c & d \end{bmatrix}\right) = \begin{bmatrix} d & b \\ c & a \end{bmatrix}.
\]
Then, the submodule $\cSkew_{\Mat_2(\cO),\sigma}$ is the kernel of the morphism $\Id + \sigma \colon \Mat_2(\cO) \to \Mat_2(\cO)$ between quasi-coherent $\cO$--modules, however for $T\in \Sch_S$
\[
\cSkew_{\Mat_2(\cO),\sigma}(T) = \left\{ \begin{bmatrix} a & b \\ c & -a \end{bmatrix} \mid a,b,c \in \cO(T) \text{ and } 2b=2c=0\right\}.
\]
If $\cSkew_{\Mat_2(\cO),\sigma}$ were quasi-coherent, then by first using the equivalence of categories \cite[Tag 0GZU]{Stacks} to restrict to the category of affine schemes over $S$ and then applying \cite[Tag 0GZV $(1)\Leftrightarrow (7)$]{Stacks}, we would have an isomorphism
\[
\cSkew_{\Mat_2(\cO),\sigma}(\Spec(\ZZ))\otimes_\ZZ \FF_2 \iso \cSkew_{\Mat_2(\cO),\sigma}(\Spec(\FF_2))
\]
coming from the morphism $\Spec(\FF_2) \to \Spec(\ZZ)$. However, the left hand side is a $1$--dimensional $\FF_2$--vector space, while the right hand side is a $3$--dimensional $\FF_2$--vector space, so this cannot happen. Hence, $\cSkew_{\Mat_2(\cO),\sigma}$ is not a quasi-coherent $\cO$--module on $\Sch_S$. Using the notation of Remark \ref{rem_not_quasicoh} with $\varphi=\Id + \sigma$, in this case we would have
\[
K\fppf(T) = \left\{ \begin{bmatrix} a & 0 \\ 0 & -a \end{bmatrix} \mid a \in \cO(T)\right\}.
\]
\end{example}

\subsection{$\cO$--Algebras}
An $\cO$--module $\calB \co \Sch_S \to \mathfrak{nc}\text{-}\mathfrak{Rings}$ from $\Sch_S$ to the category of all (not necessarily commutative) rings such that each $\calB(T)$ is a $\cO(T)$--algebra will be called an \emph{$\cO$--algebra}. We call it unital, associative, commutative, etc., if each $\calB(T)$ is so. If $\calB$ is a unital associative $\cO$--algebra which is finite locally free, then the functor of invertible elements
\begin{align*}
\uGL_{1,\calB} \co \Sch_S &\to \Grp \\
T &\mapsto \calB(T)^\times
\end{align*}
is representable by an affine $S$-group scheme \cite[2.4.2.1]{CF}. A section $u \in \uGL_{1,\calB}(S)$ induces an inner automorphism of $\calB$, denoted $\Inn(u)$ which is given on $\calB(T)$ by $b\mapsto u|_T \cdot b \cdot u|_T^{-1}$. If  $\calB$  is a separable $\calO$--algebra which is locally free of finite type, then  $\uGL_{1,\calB}$ is a reductive $S$--group scheme  \cite[3.1.0.50]{CF}. We also use the notation $\uGL_{1,\cO} = \cO\ti = \GG_m$.

\subsection{Azumaya Algebras}
A key object of our interest is an Azumaya $\cO$--algebra. First, we recall that over a commutative ring $R$, an Azumaya $R$--algebra is a central separable $R$--algebra. Equivalently, the sandwich map
\begin{align*}
\Sand \co A\otimes_R A\op &\to \End_R(A)\\
a\otimes b &\mapsto (x\mapsto axb)
\end{align*}
is an isomorphism. For separable and Azumaya algebras over rings (equivalently, over affine schemes) we refer to \cite{Ford} or \cite[III \S 5]{K}. Following \cite[5.1]{Gro-Brau}, we consider an \emph{Azumaya $\cO$--algebra}, or simply an \emph{Azumaya algebra}, to be a finite locally free $\cO$--algebra $\cA$ such that the sandwich morphism
\[
\Sand \co \cA \otimes_{\cO} \cA\op \to \cEnd_{\cO}(\cA)
\]
is an isomorphism. This is equivalent to the definition of \cite[2.5.3.4]{CF}, which asks that $\cA$ be finite locally free and that for all affine schemes $U \in \Sch_S$, we have that $\cA(U)$ is an Azumaya $\cO(U)$--algebra.

By \cite[2.5.3.6]{CF}, the rank, viewed as a locally constant integer valued function, of an Azumaya algebra is always square. Following \cite[2.5.3.7]{CF} we call the square root of an Azumaya algebra's rank the \emph{degree} of the algebra. It is a locally constant integer valued function.

Azumaya algebras of the form $\cEnd_{\cO}(\cM)$, where $\cM$ is a locally free $\cO$--module of finite positive rank, will be called \emph{neutral} algebras.

Given any Azumaya algebra $\cA$, it is locally isomorphic to matrix algebras, cf. \cite[2.5.3.8]{CF}. Furthermore, it has a unique linear map $\Trd_\cA \colon \cA \to \cO$, called the \emph{reduced trace of $\cA$}, which agrees with the usual trace locally wherever $\cA$ is isomorphic to a matrix algebra, see \cite[2.5.3.15]{CF}.

\section{Azumaya Algebras with Involutions} \label{quamo}
\subsection{Basic Concepts of Bilinear Forms} \label{bcbf}
We recall some concepts from \cite[2.6]{CF}. A {\em bilinear form} is a pair $(\calM, b)$ consisting of a finite locally free $\cO$--module $\calM$ and a bilinear morphism $b \co \calM\times \calM \to \cO$ of $\cO$--modules, equivalently, a morphism $\calM \to \calM\ch$ of $\cO$--modules. Given $b$, the associated map $\phi_b \co \calM \to \calM\ch$ is defined over $T\in \Sch_S$ on $m\in \cM(T)$ to be the map which is
\begin{align*}
\phi_b(m) \co \cM(V) &\to \cO(V) \\
m' &\mapsto b(m|_V,m')
\end{align*}
over $V\in \Sch_T$. We summarize this by writing $\phi_b(m)= b(m,\und)$. A bilinear form $(\calM,b)$ is called {\em regular\/} if $\phi_b$ is an isomorphism. If for all $T\in \Sch_S$ and all sections $x,y \in \cM(T)$ we have $b(x,y) = b(y,x)$ we call $b$ {\em symmetric}, if $b(x,y)=-b(y,x)$ we call $b$ \emph{skew-symmetric}, and if $b(x,x)=0$ we call $b$ \emph{alternating}.

We associate with a regular bilinear form $(\calM,b)$ the so-called {\em adjoint anti-automorphism\/} $\eta_b \co \cEnd_{\cO}(\calM)\to \cEnd_{\cO}(\calM)$, defined on $a\in \cEnd_{\cO}(\cM)(T)$ by
\[
 \eta_{b}(a) := \phi_b|_T^{-1} \circ a^\vee \circ \phi_b|_T \co
  \calM|_T  \xrightarrow{  \phi_b|_T }
    \calM\ch|_T\  \xrightarrow{a^\vee} \calM\ch|_T \xrightarrow{\phi_b|_T \me}  \calM|_T
\]
where $a\ch$ is the dual endomorphism of $a$. Thus, on sections $m_1, m_2\in \calM(V)$ for $V\in \Sch_T$ and $a\in \cEnd_{\cO}(\calM)(T)$ we have
\begin{equation} \label{quamo-1}
  b\big( \eta_b(a)(m_1), m_2\big) = b\big(m_1, a(m_2)\big).
\end{equation}
If $\calM$ is locally free of finite positive rank, $\eta_b$ is an involution if and only if there exists $\veps \in \mu_2(S)$ such that $b(m_1, m_2) = \veps|_T \cdot b(m_2, m_1)$ for all $T\in \Sch_S$ and $m_1,m_2\in \cM(T)$. In this case, $\veps$ is unique and we call it the \emph{type} of $\eta_b$.

Let $(\calM, b)$ be a regular bilinear form. Since $\calM \ot_{\cO} \calM\ch \cong \cEnd_{\cO}(\calM)$ we have an $\cO$--module isomorphism
\begin{equation} \label{quamo1}
\vphi_b \co \calM \ot_{\cO}\calM \simlgr \cEnd_{\cO}(\calM),
\end{equation}
defined by $m_1\ot m_2\mapsto  \big(m \mapsto m_1|_V \cdot b(m_2|_V, m)\big)$ for sections $m_1,m_2$ over $T\in \Sch_S$ and $m$ over $V\in \Sch_T$.
If $\eta_b$ is an involution of type $\veps$, its pull-back to $\calM \ot_{\cO} \calM$ is the ``$\veps$-switch'':
\begin{equation}\label{quamo2}
 \eta_b \big( \vphi_b (m_1 \ot m_2)\big) = \veps|_T \cdot \vphi_b(m_2 \ot m_1).
\end{equation}
\lv{
Proof of \eqref{quamo2}:
\begin{align*}
 & b\Big( \eta_b\big(\vphi_b(\m_1 \ot m_2)\big) (m_3), \, m_4\Big)
   = b\big(m_3, \vphi_b(m_1\ot m_2)(m_4)\big)
\\ & = b\big(m_3, \, m_1 b(m_2, m_4)\big) = b(m_3, m_1)\, b(m_2, m_4)
    = \veps b(m_1, m_3) \, b(m_2, m_4)
\\ & = \veps \, b\big( m_2, b(m_1, m_3), \, m_4\big)
       = b\big(\veps \vphi_b(m_2 \ot m_1)(m_3), \, m_4)
\end{align*}
}

A {\em quadratic form\/} is a pair $(\calM,q)$ where $\calM$ is a finite locally free $\cO$--module, $q \co \calM \to \cO$ is quadratic with respect to
scalar multiplication, 
and the associated polar form $b_q \co \calM \times \calM \to \cO$, defined on sections by $b_q(m_1,m_2) = q(m_1+m_2) - q(m_1)-q(m_2)$, is bilinear. We  say that $(\calM, q)$ is {\em regular\/} if $(\calM, b_q)$ is so.

Let $b\colon \cM\times\cM \to \cO$ be a bilinear form. For a submodule $\cN\subset \cM$, its \emph{orthogonal compliment} is another $\cO$--submodule of $\cM$, denoted $\cN^\perp \co \Sch_S \to \Ab$, given over $T\in \Sch_S$ by
\[
\cN^\perp(T)= \{a \in \cM(T) \mid b(a|_V,m)=0, \;\forall\, V\in \Sch_T, \;\forall\, m\in \cM(V)\}.
\]
\begin{lem}\label{lem_orth_comp}
Let $\cM$ be a finite locally free $\cO$--module and let $b\colon \cM \times\cM \to \cO$ be a regular bilinear form. Let $\cN\subseteq \cM$ be an $\cO$--submodule which is locally a direct summand of $\cM$. Then, the orthogonal compliment $\cN^\perp$ is also locally a direct summand of $\cM$. In particular, it is finite locally free. In addition,
\[
\rank(\cM) = \rank(\cN) + \rank(\cN^\perp).
\]
\end{lem}
\begin{proof}
This is the global version of \cite[I,\S 2 Prop.~1]{Kn}. We include a proof for the convenience of the reader.

First, choose a cover $\{T_i \to S\}_{i\in I}$ over which $\cM|_{T_i} = \cN|_{T_i} \oplus \cP_i$ for some $\cO|_{T_i}$--modules $\cP_i$. Being direct summands of a finite locally free $\cO|_{T_i}$--module, both $\cN|_{T_i}$ and $\cP_i$ are also finite locally free. Thus, by refining our cover if necessary, we may assume that all modules involved are free. Let $\{n_1,\ldots,n_k\}$ be a free basis of $\cN|_{T_i}$ and let $\{p_1,\ldots,p_\ell\}$ be a free basis of $\cP_i$. Let $\{n_1^*,\ldots,n_k^*,p_1^*,\ldots,p_\ell^*\}$ be a dual basis of $\cM|_{T_i}^*$. Since $b$ is a regular bilinear form, each $p_j^* = b(p_j',\und)$ for some $p_j' \in \cM(T_i)$. It is then clear from the properties of a dual basis that
\[
(\cN|_{T_i})^\perp = \Span_{\cO}(\{p_1',\ldots,p_\ell'\}).
\]
In particular, $(\cN|_{T_i})^\perp = \phi_b^{-1}(\Span_\cO(\{p_1^*,\ldots,p_\ell^*\}))$, and thus
\[
\cM|_{T_i} = \phi_b^{-1}(\Span_\cO(\{n_1^*,\ldots,n_k^*\})) \oplus (\cN|_{T_i})^\perp.
\]
Since $(\cN^\perp)|_{T_i} = (\cN|_{T_i})^\perp$, this shows that $\cN^\perp$ is also locally a direct summand of $\cM$, as claimed.

Regarding the claim about ranks, it is clear from the discussion above that locally we have $\rank((\cM^\perp)|_{T_i})=\rank(\cP_i)$ and therefore
\[
\rank(\cM|_{T_i}) = \rank(\cN|_{T_i}) + \rank((\cN^\perp)|_{T_i}).
\]
Since this holds locally, it holds globally as well. This finishes the proof.
\end{proof}

A particularly useful example of a regular bilinear form on a finite locally free $\cO$--module is the trace form of an Azumaya algebra. If $\cA$ is an Azumaya $\cO$--algebra, then the \emph{trace form}
\begin{align*}
\cA \times \cA &\to \cO \\
(a,b) &\mapsto \Trd_\cA(ab)
\end{align*}
is a regular bilinear form by \cite[2.5.3.16]{CF} (the statement in \cite{CF} is only for even degree algebras, but the proof holds for algebras of any degree).

\subsection{Involutions of Azumaya algebras}\label{orin}
We use the conventions of \cite{CF}, which are slightly different from  those of \cite{KMRT} in the case of base  fields. The  advantage of Calm\`es-Fasel's definitions are that they behave well under arbitrary base change.

Let $\calA$ be an Azumaya $\cO$--algebra. An {\em involution of the first kind\/}
is an anti-isomorphism  $\sigma\colon \calA \to \calA$ of Azumaya algebras
which satisfies $\sigma^2=\Id_{\calA}$. We say that two involutions $\sigma$ and $\sigma'$ are {\em conjugate\/} if there exists $\phi  \in \Aut_{\cO}(\calA)$ such that $\sigma'= \phi \circ \sigma \circ \phi^{-1}$.
In that case we have $\cSym_{\calA, \sigma'}= \phi(\cSym_{\calA, \sigma})$
and similarly for the other $\cO$--modules $\cAlt_{\cA, \si}$, $\cSkew_{\cA, \si}$ and $\cSymd_{\cA, \si}$. The reduced trace of $\cA$ is invariant under any involution by \cite[2.7.0.28]{CF}, i.e., $\Trd_\cA(\sigma(a))=\Trd_\cA(a)$ for any involution $\sigma$ on $\cA$ and all sections $a\in \cA$.
\sm

An Azumaya algebra with involution of the first kind $(\cA,\si)$ is \'etale-locally isomorphic to $\big(\cEnd_{\cO}(\calM), \eta_b\big)$ for some regular bilinear form $(\calM, b)$ \cite[2.7.0.25]{CF}. Thus, since the localization of $\si$ is still an involution, the bilinear form $b$ is $\veps$--symmetric for some $\veps \in \bmu_2(S)$. It is known that $\veps$ only depends on $\si$. Following \cite[2.7.0.26]{CF} (and not \cite{KMRT}), we call $\si$
{\em orthogonal\/} if $\veps = 1$ (so $b$ is symmetric), we call it {\em weakly symplectic\/} if $\veps = -1$ (so $b$ is skew-symmetric), and we call it \emph{symplectic} if $\veps = -1$ and $b$ is alternating. These notions are well-defined, stable under base change, and are local for the fppf topology on $\Sch_S$.
\sm

Let $(\calA,\si)$ be an Azumaya $\cO$--algebra with orthogonal involution. We mention some basic facts about the submodules of $\cA$ related to $\si$.
\begin{lem}\label{locdirsum}
Let $(\cA,\si)$ be an Azumaya $\cO$--algebra with orthogonal involution $\si$.
\begin{enumerate}[label={\rm (\roman*)}]
\item \label{locdirsum_i} If $S$ is affine, then $\cSym_{\calA,\si}$ and $\cAlt_{\calA,\si}$ are direct summands of $\cA$. In particular, $\cSym_{\cA,\si}(S)$ and $\cAlt_{\cA,\si}(S)$ are direct summands of $\cA(S)$ and hence are finite projective $\cO(S)$--modules.
\item \label{locdirsum_ii} $\cSym_{\calA,\si}$ and $\cAlt_{\calA,\si}$ are locally direct summands of $\cA$ and hence are finite locally free $\cO$--modules.
\item \label{locdirsum_iii} If $\calA$ is locally free of constant rank $n^2\in \NN_+$, then $\cSym_{\calA, \si}$ and $\cAlt_{\calA, \si}$ have rank $n(n+1)/2$ and $n(n-1)/2$ respectively.
\end{enumerate}
\end{lem}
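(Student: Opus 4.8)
The plan is to reduce everything to the affine case, where classical facts about Azumaya algebras and involutions over a commutative ring apply, and then to globalize via the ``$\mathrm{fppf}\sim\mathrm{small}$'' correspondence and the locality statements already available in the excerpt. The three parts are naturally ordered: \ref{locdirsum_i} is the local input, \ref{locdirsum_ii} is obtained from \ref{locdirsum_i} by passing to an affine cover of $S$, and \ref{locdirsum_iii} is a rank computation that can be checked \'etale-locally, where $(\cA,\si)$ becomes $(\cEnd_\cO(\cM),\eta_b)$ for a regular symmetric bilinear form $b$.

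For \ref{locdirsum_i}, assume $S=\Spec(R)$ is affine. Then $\cA(S)=A$ is an Azumaya $R$--algebra with orthogonal involution $\si$, and $\cSym_{\cA,\si}$ is quasi-coherent (as noted at the end of Section \ref{quasi_coh}), so $\cSym_{\cA,\si}(S)=\{a\in A\mid\si(a)=a\}$ and $\cAlt_{\cA,\si}(S)=(\Id-\si)(A)$. The key point is that $\Id-\si\colon A\to A$ is an $R$--module map whose image is $\cAlt_{\cA,\si}(S)$ and whose kernel is $\cSym_{\cA,\si}(S)$. I would argue that $\cSym_{\cA,\si}$ and $\cAlt_{\cA,\si}$ are locally free over $R$: since being orthogonal is fppf-local (Section \ref{orin}), there is an fppf cover of $\Spec(R)$ over which $(\cA,\si)\cong(\cEnd_\cO(\cM),\eta_b)$ with $b$ symmetric of rank $n$, and over such a cover $\cEnd_\cO(\cM)$ decomposes as the direct sum of symmetric and alternating parts, which are visibly locally free of the ranks in \ref{locdirsum_iii}; by fppf descent of the property ``locally free of constant rank'' (and using that $R$ has a connected-component decomposition refining the rank function), $\cSym_{\cA,\si}$ and $\cAlt_{\cA,\si}$ are finitely generated projective $R$--modules, and $A/\cSym_{\cA,\si}(S)\cong\cAlt_{\cA,\si}(S)$ is projective, so the exact sequence $0\to\cSym_{\cA,\si}(S)\to A\to\cAlt_{\cA,\si}(S)\to 0$ splits. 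Hence both are direct summands of $A$, which is \ref{locdirsum_i}; the sheafy statement that $\cSym_{\cA,\si}$, $\cAlt_{\cA,\si}$ are direct summands of $\cA$ over affine $S$ follows because a split exact sequence of $R$--modules gives a split exact sequence of the associated quasi-coherent $\cO$--modules.

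For \ref{locdirsum_ii}: choose an affine open (or fppf affine) cover $\{U_i\to S\}$; by \ref{locdirsum_i} applied over each $U_i$, the restrictions $\cSym_{\cA,\si}|_{U_i}$ and $\cAlt_{\cA,\si}|_{U_i}$ are direct summands of $\cA|_{U_i}$, which is exactly the statement that they are locally direct summands, and being a direct summand of the finite locally free $\cO$--module $\cA$, they are themselves finite locally free. For \ref{locdirsum_iii}: rank is a locally constant function that may be computed after any fppf base change, so I pass to an \'etale cover over which $(\cA,\si)\cong(\cEnd_\cO(\cM),\eta_b)$ with $\cM$ free of rank $n$ and $b$ symmetric (using \cite[2.7.0.25]{CF} together with $\veps=1$); via $\vphi_b\colon\cM\ot\cM\simlgr\cEnd_\cO(\cM)$ of \eqref{quamo1}, the involution $\eta_b$ pulls back to the switch $m_1\ot m_2\mapsto m_2\ot m_1$ by \eqref{quamo2} with $\veps=1$, so $\cSym$ and $\cAlt$ correspond to $\mathrm{S}^2\cM$ and $\bwe^2\cM$ respectively (more precisely, $\cSym$ is the kernel of $\Id-\si$, i.e.\ the $+1$-eigenspace for the switch, which is the symmetric square, and $\cAlt$ is its image, the alternating square), of ranks $n(n+1)/2$ and $n(n-1)/2$; by fppf descent of ranks this holds over $S$.

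The main obstacle I anticipate is the one subtlety in \ref{locdirsum_i}: that $\Id-\si$ has \emph{locally free} image and kernel over a general commutative ring $R$ in which $2$ need not be invertible. When $2$ is a unit this is trivial ($A=\cSym\oplus\cAlt$ via $\tfrac12(\Id\pm\si)$), but in general one genuinely needs the local structure theorem for orthogonal involutions, \cite[2.7.0.25]{CF}, to exhibit $\cSym$, $\cAlt$ as fppf-locally the symmetric and alternating squares of a module (hence locally free of the expected ranks), and then fppf descent to conclude projectivity over $R$; the splitting of $0\to\cSym_{\cA,\si}(S)\to A\to\cAlt_{\cA,\si}(S)\to 0$ is then automatic from projectivity of the quotient. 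Everything else — passing to covers for \ref{locdirsum_ii}, the rank count for \ref{locdirsum_iii} — is routine once this local description is in hand.
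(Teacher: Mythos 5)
Your overall strategy coincides with the paper's: reduce to the affine case, use \cite[2.7.0.25]{CF} to pass to a faithfully flat cover where $(\cA,\si)\cong(\cEnd_{\cO}(\cM),\eta_b)$ with $\cM$ free and $b$ symmetric, identify this with $(\cM^{\ot 2},\tau)$ for the switch $\tau$, and descend. However, your treatment of \ref{locdirsum_i} contains a genuine error and a resulting gap, both stemming from one misconception. The claim that over the cover ``$\cEnd_{\cO}(\cM)$ decomposes as the direct sum of symmetric and alternating parts'' is false unless $2$ is invertible: in characteristic $2$ one has $\cAlt_{\cA,\si}\subseteq\cSkew_{\cA,\si}=\cSym_{\cA,\si}$, so the two submodules are nowhere near complementary (their ranks do add up to $n^2$, but for instance $m_i\ot m_j+m_j\ot m_i$ lies in both). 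Since the entire point of the lemma is the case where $2$ is not a unit, this step cannot stand. What is true --- and what the paper proves by writing down explicit bases of $\Symm(M^{\ot 2},\tau)$ and $\Alt(M^{\ot 2},\tau)$ together with explicit complementary spans in the monomial basis --- is that each submodule is \emph{individually} free of the expected rank and \emph{individually} a direct summand, with different complements.

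The same misconception then surfaces in your conclusion: even granting (by faithfully flat descent) that $\cSym_{\cA,\si}(S)$ and $\cAlt_{\cA,\si}(S)$ are finite projective, the splitting of $0\to\cSym_{\cA,\si}(S)\to A\to\cAlt_{\cA,\si}(S)\to 0$ exhibits $\cSym_{\cA,\si}(S)$ as a direct summand and shows only that $\cAlt_{\cA,\si}(S)$ is \emph{isomorphic} to one; it says nothing about whether the submodule $\cAlt_{\cA,\si}(S)\subseteq A$ is complemented, so ``hence both are direct summands'' does not follow. To repair this, either show additionally that $A/\cAlt_{\cA,\si}(S)$ is projective (again by descent, since its base change is free by the explicit basis, and kernels, images and quotients commute with flat base change), or do what the paper does: exhibit explicit complements of both submodules in the split case and descend the property ``is a direct summand'' itself, which is legitimate by \cite[0, 6.7.5]{EGA-neu}. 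Parts \ref{locdirsum_ii} and \ref{locdirsum_iii} of your argument are fine once \ref{locdirsum_i} is repaired; for \ref{locdirsum_iii} only the ranks matter and those are correct (though note that $\Ker(\Id-\tau)$ is the module of symmetric tensors, not the symmetric square, when $2$ is not invertible).
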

\begin{proof}
\noindent \ref{locdirsum_i}: Over an affine scheme we are considering the case of an Azumaya algebra with orthogonal involution $(A,\si)$ over a ring $R$. Being finite projective can be checked after a faithfully flat extension, and by \cite[0, 6.7.5]{EGA-neu} this is also true for the property of  being a direct summand. Therefore, by \cite[2.7.0.25]{CF} we can assume that $(A,\si) \cong (\End_R(M), \eta_b)$ where $M$ is free of finite rank and $b\co M \times M \to R$ is a regular symmetric bilinear form. Thus, as $R$--modules with involution we have $(A,\si) \cong (M^{\ot 2}, \tau)$ where $\tau$ is the switch. Let $\{m_1, \ldots, m_n\}$ be a basis of $M$. Then $m_{ij} = m_i \ot m_j$ is a basis of $M^{\ot 2}$. It follows that
\[
\{ m_{ii}\mid 1\le i \le n\} \cup \{m_{ij} + m_{ji} \mid 1 \le i< j\le n\}
\]
is a basis of $\Symm(M^{\ot 2}, \tau)$, while
\[
\{ m_{ij} - m_{ji} \mid 1 \le i< j\le n\}
\]
is a basis for $\Alt(M^{\ot 2}, \tau)$. Now, observe that $\Symm(M^{\ot 2}, \tau)$ is complemented by $\Span_R(\{m_{ij} \mid 1\le i < j \le n\})$, while $\Alt(M^{\ot 2}, \tau)$ is complemented by
\[
\Span_R(\{ m_{ii} \mid 1\le i \le n\} \cup \{m_{ij} \mid 1 \le i< j\le n\}),
\]
and so both are direct summands of $A$.

\noindent \ref{locdirsum_ii}: This follows from \ref{locdirsum_i} by considering an affine cover of $S$.

\noindent \ref{locdirsum_iii}: If $\cA$ is of constant rank $n^2$, then locally we are in the case of the proof of \ref{locdirsum_i} above where the basis $\{m_1,\ldots,m_n\}$ of $M$ has $n$ elements. Therefore, the basis given for $\Symm(A,\si)$ has $n(n+1)/2$ elements, and the basis given for $\Alt(A,\si)$ has $n(n-1)/2$ elements. Thus the ranks of $\cSym_{\cA,\si}$ and $\cAlt_{\cA,\si}$ are as claimed.
\end{proof}
Some of the results of Lemma \ref{locdirsum} above are proved in \cite[2.7.0.29(1), (2)]{CF} under the assumption that the bilinear form $b$ appearing in the proof is the polar of a regular quadratic form, an assumption that is not needed for the result above.

In particular, since Lemma \ref{locdirsum} above tells us they are finite locally free, $\cSym_{\calA,\si}$ and $\cAlt_{\calA,\si}$ are represented by vector group schemes. The reader will have noticed that the proof of Lemma \ref{locdirsum} works for an involution of any type $\varepsilon\in \bmu_2(S)$. For example, when $\si$ is weakly symplectic and $\varepsilon=-1$, the regular bilinear form $b\co M\times M \to R$ will be skew symmetric and induce an isomorphism $(A,\si)\cong(M^{\otimes 2},-\tau)$ of $R$--modules with involution. In this case, the same roles are played by $\cSkew_{\cA,\si}$ and $\cSymd_{\cA,\si}$ instead.
\begin{cor}\label{locdirsum_symp}
Let $(\cA,\si)$ be an Azumaya $\cO$--algebra with weakly symplectic involution $\si$.
\begin{enumerate}[label={\rm (\roman*)}]
\item \label{locdirsum_symp_i} If $S$ is affine, then $\cSkew_{\calA,\si}$ and $\cSymd_{\calA,\si}$ are direct summands of $\cA$. In particular, $\cSkew_{\cA,\si}(S)$ and $\cSymd_{\cA,\si}(S)$ are direct summands of $\cA(S)$ and hence are finite projective $\cO(S)$--modules.
\item \label{locdirsum_symp_ii} $\cSkew_{\calA,\si}$ and $\cSymd_{\calA,\si}$ are locally direct summands of $\cA$ and hence are finite locally free $\cO$--modules.
\item \label{locdirsum_symp_iii} If $\calA$ is locally free of constant rank $n^2\in \NN_+$, then $\cSkew_{\calA, \si}$ and $\cSymd_{\calA, \si}$ have rank $n(n+1)/2$ and $n(n-1)/2$ respectively.
\end{enumerate}
\end{cor}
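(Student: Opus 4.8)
The plan is to run the argument of Lemma~\ref{locdirsum} essentially verbatim, but with the pair $(\cSkew_{\cA,\si},\cSymd_{\cA,\si})$ in the role that $(\cSym_{\cA,\si},\cAlt_{\cA,\si})$ played there, exactly as signalled in the remark preceding the corollary. So nothing conceptually new is required; what matters is to identify the right local model and then transcribe.

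For \ref{locdirsum_symp_i}, I would first reduce to an Azumaya algebra with weakly symplectic involution $(A,\si)$ over a ring $R$. Being a direct summand (hence being finite projective) can be tested after a faithfully flat base change by \cite[0, 6.7.5]{EGA-neu}, so by \cite[2.7.0.25]{CF} I may assume $(A,\si)\cong(\End_R(M),\eta_b)$ with $M$ free of finite rank $n$ and $b\co M\times M\to R$ a regular skew-symmetric bilinear form, i.e.\ of type $\veps=-1$. Via the isomorphism \eqref{quamo1} together with the $\veps$-switch identity \eqref{quamo2}, this gives $(A,\si)\cong(M^{\ot 2},-\tau)$ as $R$--modules with involution, $\tau$ the switch. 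Writing $m_{ij}=m_i\ot m_j$ for a basis $\{m_1,\dots,m_n\}$ of $M$, one has $\cSkew_{M^{\ot 2},-\tau}=\Ker(\Id-\tau)$ with basis $\{m_{ii}\mid 1\le i\le n\}\cup\{m_{ij}+m_{ji}\mid 1\le i<j\le n\}$ and $\cSymd_{M^{\ot 2},-\tau}=\Ima(\Id-\tau)$ with basis $\{m_{ij}-m_{ji}\mid 1\le i<j\le n\}$; these are precisely the modules $\Symm(M^{\ot 2},\tau)$ and $\Alt(M^{\ot 2},\tau)$ occurring in the proof of Lemma~\ref{locdirsum}. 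Hence the complements exhibited there, namely $\Span_R(\{m_{ij}\mid 1\le i<j\le n\})$ for $\cSkew$ and $\Span_R(\{m_{ii}\mid 1\le i\le n\}\cup\{m_{ij}\mid 1\le i<j\le n\})$ for $\cSymd$, work without change, proving that both are direct summands of $A$; the "in particular'' statement about $\cSkew_{\cA,\si}(S)$ and $\cSymd_{\cA,\si}(S)$ then follows by taking global sections. Part \ref{locdirsum_symp_ii} follows from \ref{locdirsum_symp_i} by passing to an affine cover of $S$, just as in Lemma~\ref{locdirsum}\,\ref{locdirsum_ii}, and \ref{locdirsum_symp_iii} is the same count: $n+\binom{n}{2}=n(n+1)/2$ basis vectors for $\cSkew_{\cA,\si}$ and $\binom{n}{2}=n(n-1)/2$ for $\cSymd_{\cA,\si}$.

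The only step deserving a moment's attention is the reduction: one must record that a weakly symplectic (a fortiori, symplectic) involution is fppf-locally adjoint to a skew-symmetric regular bilinear form, which is what makes the $(-1)$-switch identification \eqref{quamo2} available and hence makes the whole transcription legitimate. I expect this to be the ``main obstacle'' only in the weak sense that it is the one place the orthogonal proof genuinely has to be re-read rather than copied; the corollary is best thought of as the type $-1$ shadow of Lemma~\ref{locdirsum}.
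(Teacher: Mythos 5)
Your proof is correct and is essentially the paper's own argument: the paper gives no separate proof of this corollary, instead noting in the remark immediately preceding it that the proof of Lemma~\ref{locdirsum} works for an involution of any type $\veps$, with $(A,\si)\cong(M^{\ot 2},-\tau)$ and $\cSkew_{\cA,\si}$, $\cSymd_{\cA,\si}$ playing the roles of $\cSym_{\cA,\si}$, $\cAlt_{\cA,\si}$. Your identification of $\cSkew_{M^{\ot 2},-\tau}$ with $\Symm(M^{\ot 2},\tau)$ and $\cSymd_{M^{\ot 2},-\tau}$ with $\Alt(M^{\ot 2},\tau)$, together with the resulting bases and rank count, matches exactly what the paper intends.
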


These submodule pairs, $\cSym_{\cA,\si}$ and $\cAlt_{\cA,\si}$ when $\si$ is orthogonal, and $\cSkew_{\cA,\si}$ and $\cSymd_{\cA,\si}$ when $\si$ is weakly symplectic, enjoy convenient orthogonality properties with respect to the trace form.

\begin{lem}\label{lem_perp}
Let $\cA$ be an Azumaya $\cO$--algebra. Then the following hold.
\begin{enumerate}[label={\rm (\roman*)}]
\item \label{lem_perp_ii} Let $\si$ be an involution of the first kind on $\cA$. Then,
\begin{align*}
\Trd_{\cA}(\cSym_{\cA,\si}\cdot \cAlt_{\cA,\si})&=0, \text{ and}\\
\Trd_{\cA}(\cSkew_{\cA,\si}\cdot \cSymd_{\cA,\si})&=0,
\end{align*}
by which we mean, for $T\in \Sch_S$ and sections $x\in \cSym_{\cA,\si}(T)$, $y\in \cAlt_{\cA,\si}(T)$ we have $\Trd_{\cA}(xy)=0$, and similarly for the second statement.
\item \label{lem_perp_iii} If $\si$ is an orthogonal involution on $\cA$, then
\[
\cSym_{\cA,\si}^\perp = \cAlt_{\cA,\si} \text{ and } \cAlt_{\cA,\si}^\perp = \cSym_{\cA,\si}.
\]
\item \label{lem_perp_iv} If $\si$ is a weakly symplectic involution on $\cA$, then
\[
\cSymd_{\cA,\si}^\perp = \cSkew_{\cA,\si} \text{ and } \cSkew_{\cA,\si}^\perp = \cSymd_{\cA,\si}.
\]
\end{enumerate}
\end{lem}
\begin{proof}
\noindent \ref{lem_perp_ii}: Let $T\in \Sch_S$ and consider $x\in \cSym_{\cA,\si}(T)$ and $y\in \cAlt_{\cA,\si}(T)$. After passing to a cover of $T$ if needed, we may assume $y=a-\si(a)$ for some $a\in \cA(T)$. Thus, we have
\begin{multline*}
\Trd_{\cA}(x(a-\si(a))=\Trd_{\cA}(xa)-\Trd_{\cA}(x\si(a))=\Trd_{\cA}(xa)-\Trd_{\cA}(a\si(x))\\
=\Trd_{\cA}(xa)-\Trd_{\cA}(ax)=0.
\end{multline*}
The second statement is seen similarly.

\noindent \ref{lem_perp_iii}: The first statement is \cite[2.7.0.29(3)]{CF}, and because of \ref{lem_perp_ii}, a symmetric argument produces the second statement.

\noindent \ref{lem_perp_iv}: In the case when $\si$ is weakly symplectic, $\cSkew_{\cA,\si}$ and $\cSymd_{\cA,\si}$ are locally direct summands of $\cA$ by Corollary \ref{locdirsum_symp}\ref{locdirsum_symp_ii}, which also means that $\cSkew_{\cA,\si}^\perp$ and $\cSymd_{\cA,\si}^\perp$ are finite locally free by Lemma \ref{lem_orth_comp}. Therefore, we may argue as done in \cite{CF} for the case above. From \ref{lem_perp_ii} we have that
\[
\cSymd_{\cA,\si} \subseteq \cSkew_{\cA,\si}^\perp.
\]
Over residue fields this will be an equality for dimension reasons because the dimensions of $\cSkew_{\cA,\si}$ and $\cSymd_{\cA,\si}$ will be the same as the ranks given in Corollary \ref{locdirsum_symp}\ref{locdirsum_symp_iii}. Thus by Nakayama's lemma, we will have equality over local rings, and thus
\[
\cSymd_{\cA,\si} = \cSkew_{\cA,\si}^\perp
\]
globally. A symmetric argument shows the second claim.
\end{proof}

The trace can also be used to define pairings on $\cSymd_{\calA,\sigma}$ and $\cAlt_{\calA,\sigma}$ by extending a construction provided by the exercise \cite[II, exercise 15]{KMRT}.

\begin{lem}\label{lem_pairing}
Let $(\cA,\si)$ be an Azumaya $\cO$--algebra with orthogonal involution.
\begin{enumerate} [label={\rm (\roman*)}]
\item \label{lem_pairing1}
There is a unique symmetric bilinear form
$b_+ \colon \cSymd_{\calA,\sigma} \times \cSymd_{\calA,\sigma} \to \calO$
such that for each affine scheme $U \in \Sch_S$ and
$x,y \in \calA(U)$,
we have
$$
b_+\bigl(  (x+\sigma(x)),  (y+\sigma(y))\bigr)=
\Trd_{\calA}\bigl((x+\sigma(x))y \bigr).
$$

\item \label{lem_pairing2}  There is a unique symmetric bilinear form
$b_- \colon \cAlt_{\calA,\sigma} \times \cAlt_{\calA,\sigma} \to \calO$
such that for each affine scheme $U\in \Sch_S$ and $x,y \in \calA(U)$,
we have
$$
b_-\bigl(  (x-\sigma(x)),  (y-\sigma(y))\bigr)=
\Trd_{\calA}\bigl((x-\sigma(x))y \bigr).
$$
In addition, this bilinear form is regular.
\end{enumerate}
\end{lem}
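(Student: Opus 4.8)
The plan is to construct $b_+$ and $b_-$ from the stated formulas — which first requires checking that those formulas are even available — and then to verify well-definedness, symmetry, and, for $b_-$, regularity.

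First I would record that for an affine $U\in\Sch_S$ the maps $\Id+\sigma\colon\cA(U)\to\cSymd_{\cA,\sigma}(U)$ and $\Id-\sigma\colon\cA(U)\to\cAlt_{\cA,\sigma}(U)$ are surjective: applying the long exact cohomology sequences to the exact sequences of fppf sheaves
\[
0\to\cSkew_{\cA,\sigma}\to\cA\xrightarrow{\;\Id+\sigma\;}\cSymd_{\cA,\sigma}\to 0,\qquad 0\to\cSym_{\cA,\sigma}\to\cA\xrightarrow{\;\Id-\sigma\;}\cAlt_{\cA,\sigma}\to 0,
\]
whose kernels are quasi-coherent (Section \ref{quasi_coh}), we get $H^1\fppf(U,\cSkew_{\cA,\sigma})=0=H^1\fppf(U,\cSym_{\cA,\sigma})$ by Lemma \ref{lem_qc_affine_cohom}, whence the surjectivity. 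Given $s,t\in\cSymd_{\cA,\sigma}(U)$ I then pick $x,y\in\cA(U)$ with $s=x+\sigma(x)$, $t=y+\sigma(y)$ and set $b_+(s,t):=\Trd_{\cA}(sy)$. The right-hand side does not involve $x$, and it is independent of $y$: if $y+\sigma(y)=y'+\sigma(y')$ then $y-y'\in\cSkew_{\cA,\sigma}(U)$ and $\Trd_{\cA}\bigl(s\,(y-y')\bigr)=0$ by Lemma \ref{lem_perp}\ref{lem_perp_ii} since $s\in\cSymd_{\cA,\sigma}$. Bilinearity over $\cO(U)$ is immediate from linearity of $\Trd_{\cA}$. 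The form $b_-$ is constructed identically after replacing $(\cSymd_{\cA,\sigma},\cSkew_{\cA,\sigma},\Id+\sigma)$ by $(\cAlt_{\cA,\sigma},\cSym_{\cA,\sigma},\Id-\sigma)$ and using the other equality in Lemma \ref{lem_perp}\ref{lem_perp_ii}.

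Symmetry of $b_+$ follows from $\Trd_{\cA}\bigl((x+\sigma(x))y\bigr)=\Trd_{\cA}(xy)+\Trd_{\cA}(\sigma(x)y)$ together with $\Trd_{\cA}(\sigma(x)y)=\Trd_{\cA}\bigl(\sigma(\sigma(x)y)\bigr)=\Trd_{\cA}(\sigma(y)x)$, giving $b_+(s,t)=\Trd_{\cA}(xy)+\Trd_{\cA}(\sigma(y)x)=b_+(t,s)$; here I use that $\Trd_{\cA}$ is $\sigma$-invariant and invariant under cyclic permutation. The same manipulation gives symmetry of $b_-$. To assemble the maps built over all affine $U$ into a bilinear morphism of $\cO$-modules $\cSymd_{\cA,\sigma}\times\cSymd_{\cA,\sigma}\to\cO$ (and likewise on $\cAlt_{\cA,\sigma}$), I would note that they are compatible with restriction along morphisms of affine $S$-schemes (again because $\Trd_{\cA}$ is) and that $\cSymd_{\cA,\sigma}$, $\cAlt_{\cA,\sigma}$ and $\cO$ are quasi-coherent; a bilinear form is a morphism of quasi-coherent $\cO$-modules out of a tensor product, and such a morphism is equivalent to a compatible family over affine schemes, so this produces $b_\pm$ and yields their uniqueness subject to the stated formula.

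It remains to prove that $b_-$ is regular, i.e. that $\phi_{b_-}\colon\cAlt_{\cA,\sigma}\to\cAlt_{\cA,\sigma}\ch$ is an isomorphism; recall $\cAlt_{\cA,\sigma}$, hence its dual, is finite locally free by Lemma \ref{locdirsum}\ref{locdirsum_ii}. Since being an isomorphism of $\cO$-modules is fppf-local, I may pass to an étale cover over which $(\cA,\sigma)\cong(\cEnd_{\cO}(\cM),\eta_b)$ for a regular symmetric bilinear form $(\cM,b)$ by \cite[2.7.0.25]{CF}, and Zariski-refine so that $\cM$ is free with basis $m_1,\dots,m_n$. Transporting along the isomorphism $\vphi_b$ of \eqref{quamo1} — which is an algebra isomorphism once $\cM\ot_{\cO}\cM$ carries the product $(m_1\ot m_2)(m_3\ot m_4)=b(m_2,m_3)\,m_1\ot m_4$, sends $\eta_b$ to the switch $\tau$ by \eqref{quamo2} (the type being $1$), and satisfies $\Trd_{\cA}(m_1\ot m_2)=b(m_1,m_2)$ — the proof of Lemma \ref{locdirsum} gives that $w_{ij}:=m_i\ot m_j-m_j\ot m_i$, $i<j$, form a basis of $\cAlt_{\cA,\sigma}$, and evaluating the defining formula for $b_-$ yields, with $g_{ij}:=b(m_i,m_j)$,
\[
b_-(w_{ij},w_{kl})=g_{il}g_{jk}-g_{ik}g_{jl}=-\det\begin{pmatrix} g_{ik} & g_{il}\\ g_{jk} & g_{jl}\end{pmatrix}.
\]
Hence the Gram matrix of $b_-$ in the basis $(w_{ij})_{i<j}$ equals, up to sign, the matrix $\La^{2}G$ of the second exterior power of the Gram matrix $G=(g_{ij})$ of $b$; since $\det(\La^{2}G)=\det(G)^{\,n-1}$ (Sylvester–Franke) and $G$ is invertible because $b$ is regular, $\La^{2}G$ is invertible and $b_-$ is regular. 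The step I expect to be the crux is this last one, together with the preliminary bookkeeping that makes the defining formula available: the restriction to affine $U$ and the vanishing of $H^1$ of quasi-coherent sheaves are exactly what is needed to state the formula, and once one recognizes — in the split model — the Gram matrix of $b_-$ as (up to sign) a compound matrix of the Gram matrix of $b$, regularity follows with no further work. (This recovers \cite[II, exercise 15]{KMRT} when $\cA$ has degree $2$, where $b_-$ lives on a rank-one module and equals $-\det G$.)
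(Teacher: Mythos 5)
Your proof is correct. Parts \ref{lem_pairing1} and the well-definedness/symmetry of \ref{lem_pairing2} follow essentially the paper's route (the paper compresses well-definedness and symmetry into the single identity $\Trd_{\cA}((x+\sigma(x))y)=\Trd_{\cA}(x(y+\sigma(y)))$, whereas you separate them and invoke Lemma \ref{lem_perp}\ref{lem_perp_ii}; you also make explicit the surjectivity of $\Id\pm\sigma$ on sections over affines via Lemma \ref{lem_qc_affine_cohom}, a point the paper leaves implicit). Where you genuinely diverge is the regularity of $b_-$: the paper uses that $\cAlt_{\cA,\si}$ is finite locally free to reduce, via Nakayama, to the residue-field case, and there observes $b_-(x,y-\sigma(y))=\Trd_A(xy)$, so regularity of the trace form forces $\phi_{b_-}$ to be injective, hence bijective by a dimension count. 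You instead pass to a split local model, compute the Gram matrix of $b_-$ in the basis $w_{ij}=m_i\ot m_j-m_j\ot m_i$, and identify it with $-\La^2 G$, concluding by Sylvester--Franke that its determinant is $\pm\det(G)^{n-1}$, a unit. Both arguments are valid (Sylvester--Franke is a polynomial identity, so it holds over any commutative ring); the paper's is shorter and avoids choosing a splitting, while yours yields an explicit formula for $b_-$ in the split case and bypasses the residue-field reduction entirely.
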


\begin{proof}
 \ref{lem_pairing1}: We observe first that
 for each affine scheme $U$ over $S$ and
 $x,y \in \calA(U)$,
 we have
 $\Trd_{\calA}((x+\sigma(x))y)= \Trd_{\calA}(x(y+\sigma(y)))$ since $\Trd_{\cA}$ is invariant under $\si$. This implies that $b_{+}$ is well-defined
 and also that it is symmetric.

 \sm

\noindent  \ref{lem_pairing2}: The fact that $b_{-}$ is well-defined and symmetric is similar to \ref{lem_pairing1}. To see that it is regular, we have to show that the induced map
$\cAlt_{\calA,\sigma} \to \cAlt_{\calA,\sigma}^\vee$ is an isomorphism.
Since $\cAlt_{\calA,\sigma}$ is locally free of finite rank,
the Nakayama lemma reduces us to the field case.
For all $x \in \Alt(A, \sigma)$ and $y \in A$, we
we have $b_{-}\bigl(x, (y-\sigma(y))\bigr)= \Trd_A(xy)$.
Since $\Trd_A$ is regular, the kernel of $b_{-}$ is zero.
Thus $b_{-}$ is regular.
\end{proof}

\begin{remark}
The proof of \ref{lem_pairing2} does not apply in case \ref{lem_pairing1} because $\cSymd_{\cA,\si}$ is not locally free of finite rank. However, $b_+$ is regular over fields.
\end{remark}

\section{Quadratic Pairs}\label{quad_pairs}
\begin{defn}[Quadratic pairs]\label{def-qp} Let $(\cA, \si)$ be an Azumaya $\cO$--algebra with orthogonal involution. Following \cite[\S5B]{KMRT} and \cite[2.7.0.30]{CF}, we call $(\si,f)$ a {\em quadratic pair\/} if
$f\colon \cSym_{\calA, \sigma} \to \calO$ is a linear form satisfying
\begin{equation}\label{eq_trace_form}
f(a + \sigma(a))=\Trd_{\calA}(a)
\end{equation}
for all $T\in \Sch_S$ and every $a \in \calA(T)$. The linear form $f$ is called a \emph{semi-trace} and we call $(\cA, \si,f)$ a {\em quadratic triple\/}.

Two quadratic triples $(\calA,\sigma,f)$ and $(\calA',\sigma',f')$ are called {\em isomorphic\/} if there exists an isomorphism $\phi \co \cA \to \cA'$ of $\cO$--algebras such that
$\sigma'= \phi \circ \sigma \circ \phi^{-1}$ and $f'= f \circ \phi^{-1}$ (this is well-defined since $\cSym_{\calA', \sigma'}= \phi(\cSym_{\calA, \sigma})$). The notion of a quadratic triple is stable under arbitrary base change.

\begin{remark}\label{rem_f_non_zero}
If $(\cA,\si,f)$ is a quadratic triple and $U\in \Sch_S$ is an affine scheme, then $f(U)\not\equiv 0$. This follows from \cite[11.1.6]{Ford}, which states that since $\cA(U)$ is an Azumaya $\cO(U)$--algebra, there exists a section $a\in \cA(U)$ with $\Trd_{\cA}(a)=1\in \cO(U)$. Then,
\[
f(a+\si(a))=\Trd_{\cA}(a) \neq 0.
\]
\end{remark}

\begin{examples}\label{quafoex} Let $(\cA, \si)$ be an Azumaya $\cO$--algebra with orthogonal involution.
\begin{enumerate}[wide, labelindent=0pt, label={\rm (\alph*)}]
\item \label{quafoex_a} If $(\si,f)$ is a quadratic pair, then for all $T\in \Sch_S$ and $s\in \cSym_{\cA,\si}(T)$ we have that
\[
2f(s)=f(s+\si(s))=\Trd_{\cA}(s).
\]
Therefore, when $2\in \cO\ti $ there exists a unique $f$ such that $(\si, f)$ is a quadratic pair, namely $f(s) = \frac{1}{2} \Trd_{\cA}(s)$.

\item \label{quafoex_d} Suppose $S=\Spec(R)$ where $R$ is an integral domain whose field of fractions $K$ has characteristic different from $2$. If $(\cA,\sigma)$ is an Azumaya $\cO$--algebra with orthogonal involution, then there exists at most one linear form $f$ making $(\sigma,f)$ a quadratic pair on $\cA$. Indeed, this follows from uniqueness in \ref{quafoex_a} after base change from $S$ to $\Spec(K)$.

\item \label{quafoex_b} Given an element $\ell \in \cA(S)$ with $\ell+\si(\ell)=1$, we define $f\co \cSym_{\cA,\si} \to \cO$ over $T\in \Sch_S$ as
\begin{align*}
f(T) \co \cSym_{\cA,\si}(T) &\to \cO(T) \\
s &\mapsto \Trd_{\cA}(\ell|_T \cdot s).
\end{align*}
This map is clearly linear, and for an element $a+\si(a) \in \cA(T)$ we obtain
\[
f(a+\si(a))=\Trd_{\cA}(\ell|_T \cdot (a+\si(a))) = \Trd_{\cA}((\ell|_T + \si(\ell|_T))\cdot a) = \Trd_{\cA}(a),
\]
and so $(\cA,\si,f)$ is a quadratic triple. By Lemma \ref{lem_perp}\ref{lem_perp_iii}, the function $f$ is determined by $\ell$ up to the addition of an element from $\cAlt_{\cA,\si}(S)$, i.e., $\ell_1 -\ell_2 \in \cAlt_{\cA,\si}(S)$ if and only if $\Trd_{\cA}(\ell_1 \und)$ agrees with $\Trd_{\cA}(\ell_2 \und)$ on $\cSym_{\cA,\si}$.
\end{enumerate}
More examples will be discussed below in Examples \ref{split_example} and \ref{ex_cover_construction}.
\end{examples}

The existence of quadratic pairs is related to quadratic forms. We describe this in the following Proposition~\ref{qpex} for the neutral Azumaya algebra $\cEnd_{\cO}(\calM)$,  generalizing \cite[5.11]{KMRT} for $S=\Spec(R)$ where $R$ is a field. Part \ref{qpex_a} of \ref{qpex} is stated in \cite[2.7.0.31]{CF}. \end{defn}

\begin{prop} \label{qpex} Let $\calM$ be a locally faithfully projective $\cO$--module with a regular symmetric bilinear form $b\co \calM \times \calM \to \cO$. Thus, $\cEnd_{\cO}(\calM)$ is an Azumaya $\cO$--algebra with orthogonal adjoint involution $\eta_b$ by Section {\rm \ref{quamo}}. One knows that%
\[ \vphi_b \co \calM \ot_{\cO}\calM \simlgr \cEnd_{\cO}(\calM),\]
defined on sections by $m_1\ot m_2\mapsto  \big(m \mapsto m_1 \, b(m_2, m)\big)$, is an isomorphism of $\cO$--modules.
\sm
\begin{enumerate}[wide,labelindent=0pt,label={\rm (\roman*)}]
\item \label{qpex_a} Assume $b=b_q$ for a regular quadratic form $q$. Then, there exists a unique linear form $f_q \co \cSym_{\cEnd_{\cO}(\calM), \eta_b} \to \cO$ such that
\begin{equation} \label{qpex_eq_i}
     (f_q \circ \varphi_b )(m \ot m) = q(m)
\end{equation}
holds for all $T\in \Sch_S$ and sections $m \in \calM(T)$. The pair $(\eta_b, f_q)$ is a quadratic pair.

\item \label{qpex_b} Assume $(\eta_b, f)$ is a quadratic pair for some $f$. Then, there exists a unique quadratic form $q_f \co \calM \to \cO$, defined over $T\in \Sch_S$ on sections $m\in \cM(T)$ by
\begin{equation} \label{qpex_eq_ii}
 q_f(m) =  (f \circ \varphi_b )(m \ot m),\end{equation}
whose polar form is $b$.

\item \label{qpex_c} The involution $\eta_b$ is part of a quadratic pair on $\cEnd_{\cO}(\calM)$ if and only if $b$ is the polar form of a regular quadratic form.
\end{enumerate}
\end{prop}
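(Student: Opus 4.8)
The plan is to reduce to the case of a free module over an affine base and to exploit the identification of $\cSym_{\cEnd_{\cO}(\calM),\eta_b}$ with the module of symmetric two-tensors. First one observes that all the assertions concern the existence, non-existence, or equality of morphisms of quasi-coherent $\cO$--modules (and, in \ref{qpex_b}, of a morphism of sheaves of sets $\calM\to\cO$), while being a regular quadratic form and being a quadratic pair are both stable under base change; hence it suffices to argue fppf-locally and then glue. So I may take $S=\Spec(R)$ and $\calM=M$ free with basis $e_1,\dots,e_n$. Since $b$ is symmetric, $\eta_b$ has type $\veps=1$, so by \eqref{quamo2} the isomorphism $\vphi_b$ intertwines the switch $\tau$ on $M\ot M$ with $\eta_b$ and hence restricts to an isomorphism $\cSym_{M\ot M,\tau}\simlgr\cSym_{\cEnd_{\cO}(M),\eta_b}$; by (the proof of) Lemma~\ref{locdirsum}, $\cSym_{M\ot M,\tau}$ is free on $\{e_i\ot e_i\}\cup\{e_i\ot e_j+e_j\ot e_i:i<j\}$, and this module is generated over $R$ by the squares $m\ot m$, $m\in M$ (indeed $e_i\ot e_j+e_j\ot e_i=(e_i+e_j)\ot(e_i+e_j)-e_i\ot e_i-e_j\ot e_j$).

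For \ref{qpex_a}, uniqueness of $f_q$ is then immediate, since \eqref{qpex_eq_i} prescribes $f_q$ on a generating set. For existence one sets $f_q(e_i\ot e_i)=q(e_i)$ and $f_q(e_i\ot e_j+e_j\ot e_i)=b(e_i,e_j)$ — consistent because $b$ is symmetric — and transports this form along $\vphi_b$; then \eqref{qpex_eq_i} for a general $m=\sum_i a_i e_i$ drops out from comparing $m\ot m=\sum_i a_i^2(e_i\ot e_i)+\sum_{i<j}a_i a_j(e_i\ot e_j+e_j\ot e_i)$ with the expansion $q(m)=\sum_i a_i^2 q(e_i)+\sum_{i<j}a_i a_j b(e_i,e_j)$ valid for any quadratic form. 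For the quadratic-pair axiom \eqref{eq_trace_form} one writes $a=\vphi_b\bigl(\sum_{i,j}c_{ij}\,e_i\ot e_j\bigr)$, uses \eqref{quamo2} to get $a+\eta_b(a)=\vphi_b\bigl(\sum_{i,j}(c_{ij}+c_{ji})\,e_i\ot e_j\bigr)$, applies $f_q$, and compares with $\Trd_{\cA}(a)$, which is computed from $\Trd_{\cA}(\vphi_b(e_i\ot e_j))=b(e_j,e_i)$; the two sides agree because $b(e_i,e_i)=b_q(e_i,e_i)=2q(e_i)$. These local constructions are forced, hence basis- and cover-independent, so they glue to a global $f_q$. (The existence part is also \cite[2.7.0.31]{CF}.)

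For \ref{qpex_b}, note $\vphi_b(m\ot m)$ is $\eta_b$--symmetric by \eqref{quamo2}, so $q_f(m)=(f\circ\vphi_b)(m\ot m)$ is defined over every $T\in\Sch_S$; it is quadratic in $m$ since $\vphi_b(am\ot am)=a^2\vphi_b(m\ot m)$. Its polar form is identified by $q_f(m+m')-q_f(m)-q_f(m')=f\bigl(\vphi_b(m\ot m')+\eta_b(\vphi_b(m\ot m'))\bigr)=\Trd_{\cA}(\vphi_b(m\ot m'))=b(m,m')$, where the middle equality is \eqref{eq_trace_form} and the last one follows from $\Trd_{\cA}(\vphi_b(m\ot m'))=b(m',m)=b(m,m')$, $b$ being symmetric — best seen by identifying $M\ot M\cong M\ot M\ch$ via $\phi_b$ and the reduced trace with the evaluation pairing. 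Hence $b_{q_f}=b$, so $q_f$ is a regular quadratic form, and it is unique since it is given by an explicit formula. Part \ref{qpex_c} is then formal: a regular $q$ with $b=b_q$ yields the quadratic pair $(\eta_b,f_q)$ of \ref{qpex_a}, and conversely a quadratic pair $(\eta_b,f)$ yields, via \ref{qpex_b}, a regular quadratic form $q_f$ with polar form $b$.

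The only delicate point is \ref{qpex_a}: one must make sure the locally defined $f_q$ glues — which, as noted, follows from its uniqueness — and that the identities \eqref{qpex_eq_i} and \eqref{eq_trace_form} hold functorially over every $V\in\Sch_S$ rather than only on global sections; this is immediate by base change, since the restriction of $(\cEnd_{\cO}(\calM),\eta_b,f_q)$ to $V$ has the same description. The single genuine input, reused in \ref{qpex_b}, is the identity $\Trd_{\cA}(\vphi_b(m\ot m'))=b(m,m')$; everything else is formal manipulation with the switch $\tau$ and the defining formulas of $\vphi_b$ and $f_q$.
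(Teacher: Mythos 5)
Your proof is correct, and while it follows the same skeleton as the paper's (reduce to the affine case, use $\vphi_b$ to identify $\cSym_{\cEnd_{\cO}(\calM),\eta_b}$ with the $\tau$--symmetric tensors, and use that these are spanned by the squares $m\ot m$), the existence argument in \ref{qpex_a} is organized differently in a way that is worth noting. The paper stays with a faithfully projective $M$ over the affine base, defines the would-be form directly on the spanning set $\{m\ot m\}$ by $m\ot m\mapsto q(m)$, and then must prove the well-definedness implication $\sum_i m_i\ot m_i=0\Rightarrow\sum_i q(m_i)=0$, which it does by localizing and invoking the argument of \cite[5.11]{KMRT}; the verification of the quadratic-pair axiom is likewise outsourced to that reference. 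You instead pass all the way to a free basis, define $f_q$ on the \emph{free} basis $\{e_i\ot e_i\}\cup\{e_i\ot e_j+e_j\ot e_i\}_{i<j}$ of the symmetric tensors (so well-definedness is automatic), and then verify \eqref{qpex_eq_i} and \eqref{eq_trace_form} by direct expansion, using $\Trd_{\cA}(\vphi_b(e_i\ot e_j))=b(e_j,e_i)$ and $b(e_i,e_i)=2q(e_i)$. This sidesteps the one genuinely delicate step of the paper's argument and makes the proof self-contained, at the cost of an extra layer of reduction: since $f_q$ is only constructed locally and basis-dependently, you must glue, and the gluing is legitimate only because \eqref{qpex_eq_i} characterizes $f_q$ uniquely on each piece — a point you do address explicitly. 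Your treatment of \ref{qpex_b} is the computation that the paper delegates to \cite[5.11]{KMRT}, carried out in full, and \ref{qpex_c} is formal in both versions.
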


\begin{proof} Since in each statement a global function is given, it suffices to check the claims locally. It is therefore no harm to assume $S= \Spec(R)$ and to prove the lemma in the setting of a faithfully projective $R$--module $M$ with a regular symmetric bilinear form $b\co M \times M \to R$. \sm

\noindent \ref{qpex_a}: We assume that $b=b_q$ for a regular quadratic form $q\co M \to R$ and need to prove that  there exists a unique linear form $f_q \co \Symm(\End_R(M), \eta_b) \to R$ satisfying \eqref{qpex_eq_i}.

The formula  $\eta_b \big( \vphi_b(m_1 \ot m_2)\big) = \vphi_b(m_2 \ot m_1)$ is immediate from the definitions. Hence $\vphi_b$ induces an isomorphism between the symmetric elements of $(\End_R(M), \eta_b)$ and those of $(M^{\ot 2}, \tau)$ where $\tau$ is the ``switch''. We claim
\begin{equation}\label{qpex-3a}  \{ x\in M\ot_R M \mid \tau(x) = x \} = \Span_R \{ m \ot m \mid m \in M\}. \end{equation}
Indeed, it is obvious that the right-hand side is contained in the left-hand side. Thus, to prove equality we may localize in a maximal ideal, equivalently, we may assume that $R$ is a local ring and so $M$ is free. This allows us to consider a free basis $\{m_1,\ldots,m_n\}$ and express $x\in M^{\otimes 2}$ as $x=\sum_{i,j=1}^n c_{ij}m_i\otimes m_j$ for some coefficients $c_{ij}\in R$. If $\tau(x)=x$ then we must have that $c_{ij}=c_{ji}$, and so we may write
\[
x= \sum_{i=1}^n c_{ii}m_i\otimes m_i + \sum_{i,j=1 \atop i<j}^n c_{ij}(m_i\otimes m_j + m_j\otimes m_i).
\]
We see this belongs to the right side of the equality above by noting that
\[
m_i\otimes m_j + m_j\otimes m_i = (m_i+m_j)\otimes(m_i+m_j) - (m_i\otimes m_i) - (m_j\otimes m_j).
\]

We are now left with proving the existence of a linear form
\[ \wtl f \co \Span_R\{ m \ot m \mid m\in M\} \to R, \quad m \ot m \mapsto q(m). \]
If $\wtl f$ exists as a set map, it is linear and unique by \eqref{qpex-3a}. Thus, the point is to prove that $\wtl f$ is well-defined, i.e.,
\begin{equation} \label{qpex-4}
\textstyle \sum_i m_i \ot m_i = 0 \quad \implies \quad
   \sum_i q(m_i) = 0 .
\end{equation}
To show \eqref{qpex-4} we can again argue by localization and therefore assume that $M$ is free. But if $M$ is free, we can use the argument in the proof of \cite[5.11]{KMRT} establishing the existence of $f$ over fields.
\sm

\noindent \ref{qpex_b}: We assume that $(\eta_b,f)$ is a quadratic pair for the Azumaya $R$--algebra $\End_R(M)$ and must show that then $q_f \co M \to R$, defined by \eqref{qpex_eq_ii} 
is a quadratic form whose polar form is $b$.
To do so, we can use the proof of \cite[5.11]{KMRT}, which  carries over without any change.

\noindent \ref{qpex_c}: This summarizes \ref{qpex_a} and \ref{qpex_b}.
\end{proof}

\begin{examples}\label{split_example} We give some further examples. In particular we outline the \emph{split examples} in all degrees.
\begin{enumerate}[wide, labelindent=0pt, label={\rm (\alph*)}]
\item \label{split_example_a} In the setting of Proposition \ref{qpex}, a quadratic pair $(\eta_b, f)$ forces $b=b_q$ for a regular quadratic form $q$. Hence, if $\calM$ has odd rank, then necessarily $2\in \cO\ti$.
\sm

\item \label{split_example_b} Let $(\cM_0,q_0)=(\cO^{2n},q_0)$ where $q_0$ is the split hyperbolic form on $\cO^{2n}$ defined on sections by
\[
q_0(x_1,\ldots,x_{2n})=\sum_{i=1}^n x_{2i-1}x_{2i}.
\]
It is regular. We let $\eta_0$ be the associated orthogonal involution of $\cA_0=\cEnd_{\cO}(\cM_0)=\Mat_{2n}(\cO)$, and denote by $f_0$ the unique linear form on $\cSym_{\cA_0,\eta_0}$ satisfying \eqref{qpex_eq_i}. It is computed explicitly in the example after \cite[2.7.0.31]{CF}. We will refer to $(\cA_0,\eta_0,f_0)$ as the \emph{split example} (in degree $2n$). We point out that there may be many different linear forms $f$ making $(\cA_0,\eta_0,f)$ a quadratic triple.

\item \label{split_example_c} Assume $2\in \cO\ti$ and let $(\cM_0,q_0)=(\cO^{2n+1},q_0)$ where $q_0$ is the split form on $\cO^{2n+1}$ defined on sections by
\[
q_0(x_1,\ldots,x_{2n+1})=\left(\sum_{i=1}^n x_{2i-1}x_{2i}\right) + x_{2n+1}^2
\]
It is regular since $2$ is invertible. We let $\eta_0$ be the associated orthogonal involution of $\cA_0=\cEnd_{\cO}(\cM_0)=\Mat_{2n+1}(\cO)$. It has a unique linear form on $\cSym_{\cA_0,\eta_0}$, namely $f_0=\frac{1}{2}\Trd_{\cA_0}$ by Example \ref{quafoex}\ref{quafoex_a}, such that $(\cA_0,\si_0,f_0)$ is a quadratic triple. We also refer to this as the \emph{split example} (in degree $2n+1$).

\item \label{split_example_d} Other examples are given in \cite[5.12 and 5.13]{KMRT}.
\end{enumerate}
\end{examples}

\begin{cor}{\cite[2.7.0.32]{CF}}\label{cor_split_locally}
Every quadratic triple $(\cA,\si,f)$ is split \'etale-locally.
\end{cor}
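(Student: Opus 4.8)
The plan is to reduce the statement, via \cite[2.7.0.25]{CF} and Proposition~\ref{qpex}, to the assertion that a regular quadratic form is split after an \'etale base change.

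First I would use that $\si$, being orthogonal, is \'etale-locally adjoint to a regular symmetric bilinear form: by \cite[2.7.0.25]{CF} there is an \'etale cover of $S$ over which $(\cA,\si)\cong(\cEnd_{\cO}(\cM),\eta_b)$ with $(\cM,b)$ regular and symmetric, and after a further Zariski refinement $\cM=\cO^{r}$ is free, necessarily with $r=\deg\cA$. Transporting $f$ through this isomorphism gives a linear form $f'$ on $\cSym_{\cEnd_{\cO}(\cM),\eta_b}$ making $(\eta_b,f')$ a quadratic pair; by Proposition~\ref{qpex} the assignments $q\mapsto f_q$ and $f\mapsto q_f$ are mutually inverse bijections between quadratic pairs $(\eta_b,f)$ on $\cEnd_{\cO}(\cM)$ and regular quadratic forms on $\cM$ with polar form $b$, so $f'=f_q$ for the regular quadratic form $q:=q_{f'}$. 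Moreover the assignment $q\mapsto(\cEnd_{\cO}(\cO^{r}),\eta_{b_q},f_q)$ is functorial for isometries: an isometry $\psi\colon(\cO^{r},q)\iso(\cO^{r},q')$ induces $\Psi\colon a\mapsto\psi a\psi\me$ carrying $\eta_{b_q}$ to $\eta_{b_{q'}}$, and since $\Psi\circ\varphi_{b_q}=\varphi_{b_{q'}}\circ(\psi\ot\psi)$ and $(f_q\circ\varphi_{b_q})(m\ot m)=q(m)$, it carries $f_q$ to $f_{q'}$ by the uniqueness in Proposition~\ref{qpex}\ref{qpex_a}. Hence it suffices to show that $(\cO^{r},q)$ is \'etale-locally isometric to the split form $(\cM_0,q_0)$ of Example~\ref{split_example} of the parity of $r$ (in odd degree $2\in\cO\ti$ by Example~\ref{split_example}\ref{split_example_a}, so $q_0$ is defined).

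For the \'etale-local splitting of $(\cO^{r},q)$ I would argue with the isometry scheme $\uIsom\bigl((\cO^{r},q),(\cM_0,q_0)\bigr)$, which is representable by an affine $S$--scheme of finite presentation and is a pseudo-torsor under the orthogonal group scheme $\Orth(\cM_0,q_0)$. Since $q_0$ is regular, $\Orth(\cM_0,q_0)$ is smooth over $S$ (this holds in all characteristics, and is part of the Calm\`es--Fasel framework), so this isometry scheme is smooth over $S$. It is also surjective onto $S$: over every geometric point $\Spec(\Omega)\to S$ the base changes of $q$ and of $q_0$ are regular quadratic forms of the same rank over the separably closed field $\Omega$, hence isometric, because over a field such a form is classified by its rank together with its discriminant (when $2$ is invertible) or its Arf invariant (when $2=0$), and both invariants are trivial over $\Omega$. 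A smooth surjective $S$--scheme admits a section \'etale-locally on $S$, which yields an isometry $(\cO^{r},q)\iso(\cM_0,q_0)$ over an \'etale cover. Chaining the reductions, the composite isomorphism $\Psi$ then carries $(\eta_{b_q},f_q)$ to $(\eta_0,f_0)$, so $(\cA,\si,f)$ is \'etale-locally isomorphic to the split example $(\cA_0,\eta_0,f_0)$, which is the claim.

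I expect the only real difficulty to be the middle step: one must quote correctly that the orthogonal group of a regular quadratic form is smooth over an arbitrary base --- including characteristic $2$ --- and that regular quadratic forms of a fixed rank over a separably closed field of characteristic $2$ are isometric (equivalently, the Arf invariant is the complete obstruction and is trivial there). The remaining steps are formal consequences of \cite[2.7.0.25]{CF} and Proposition~\ref{qpex}.
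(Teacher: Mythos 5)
Your argument is correct in substance and follows the same skeleton as the paper's proof: reduce via \cite[2.7.0.25]{CF} and Proposition~\ref{qpex} to a neutral triple $(\cEnd_{\cO}(\cM),\eta_{b_q},f_q)$ coming from a regular quadratic form, then split $(\cM,q)$ \'etale-locally and transport $f_q$ to $f_0$ by the uniqueness in Proposition~\ref{qpex}\ref{qpex_a}. The differences are: (1) the paper treats odd degree by a separate shortcut (there $2\in\cO\ti$, so $f=\tfrac12\Trd$ is forced and one only needs to split $\si$ itself), whereas you handle both parities uniformly through the isometry scheme; and (2) for the \'etale-local splitting of $(\cM,q)$ the paper simply cites \cite[2.6.1.13]{CF}, whereas you re-derive it via smoothness of the orthogonal group. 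Your explicit verification that $q\mapsto(\eta_{b_q},f_q)$ is functorial for isometries is a worthwhile addition that the paper leaves implicit.

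The one step you should tighten is the inference ``$\uIsom\bigl((\cO^r,q),(\cM_0,q_0)\bigr)$ is a pseudo-torsor under the smooth group $\Orth(\cM_0,q_0)$, hence smooth.'' Smoothness of a pseudo-torsor does not follow formally from smoothness of the group over a general base: a pseudo-torsor can fail to be flat over $S$ (its geometric fibres are each either empty or a copy of the group, which gives fibrewise smoothness, but not flatness of the total space). The standard repair is to first show the two forms are isomorphic fppf-locally (e.g.\ over each local ring, by a Witt-decomposition/lifting argument, or by quoting \cite[2.6.1.13]{CF} directly), so that the isometry scheme is a genuine fppf torsor; smoothness then descends from the group along the trivializing cover, and \'etale-local sections follow. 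Alternatively one checks smoothness of the isometry scheme by the infinitesimal criterion directly. Since this is exactly the content of the result the paper cites, your proof is not wrong so much as it silently re-uses the cited statement at the point where you flag the ``only real difficulty''; everything else is a formal consequence, as you say.
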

\begin{proof}
By first considering the cover by connected components, we may assume that $\cA$ is of constant degree $n$. If $n$ is odd, then by Example \ref{split_example}\ref{split_example_a} above and Example \ref{quafoex}\ref{quafoex_a}, we must have $2\in \cO\ti$ and $f=\frac{1}{2}\Trd_{\cA,\si}$. Then there exists an \'etale cover over which $\si$ and $\eta_0$ are isomorphic, which is sufficient since those isomorphisms will identify $f$ and $f_0$ due to uniqueness.

Now assume $n$ is even. We have already started with \cite[2.7.0.25]{CF}: every quadratic triple is \'etale-locally of the form $(\cEnd_{\cO}(\cM),\eta_b,f)$ considered in Proposition \ref{qpex}. By Proposition \ref{qpex}\ref{qpex_b}, the regular bilinear form of that lemma is the polar form of a regular quadratic form $q\co \cM \to \cO$, given by \eqref{qpex_eq_ii}. By \cite[2.6.1.13]{CF}, after a second \'etale extension, we can then assume that $(\cM,q)=(\cM_0,q_0)$ as in Example \ref{split_example}\ref{split_example_b}, and that the linear form is the one of Proposition \ref{qpex}\ref{qpex_a}, i.e., that the quadratic triple is split.
\end{proof}

\begin{remark}
Corollary \ref{cor_split_locally} shows that, if one wishes, it is sufficient to work with the \'etale topology on the category $\Sch_S$. In this paper, our arguments do not use particular properties of the fppf topology which are not shared with the \'etale topology, and so our results still hold in that setting.
\end{remark}

\begin{cor}\label{cor_f_of_one}
Let $(\cA,\si,f)$ be a quadratic triple. Assume $\cA$ is of constant rank $n^2 \in \NN$. Then, we have
\[
f(1_{\cA}) = \frac{n}{2} \in \cO(S).
\]
\end{cor}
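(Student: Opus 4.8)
The plan is to reduce to the \emph{split} quadratic triple and compute there, using that $\cO$ is a sheaf. First I would record the only identity that is immediately available: applying the defining relation \eqref{eq_trace_form} to $a=1_{\cA}$ gives
\[
2\,f(1_{\cA})=f\bigl(1_{\cA}+\sigma(1_{\cA})\bigr)=\Trd_{\cA}(1_{\cA})=n
\]
in $\cO(S)$. When $n$ is odd this already settles the matter: by Example \ref{split_example}\ref{split_example_a}, applied \'etale-locally, the existence of a quadratic pair of odd degree forces $2\in\cO\ti$ \'etale-locally, hence $2\in\cO\ti(S)$ (multiplication by $2$ is then an isomorphism of sheaves on $\Sch_S$), so $n/2=\tfrac12 n\in\cO(S)$ is a well-defined section and one may divide the displayed identity by $2$.

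When $n$ is even the relation $2f(1_{\cA})=n$ is not sufficient on its own, since $\cO(S)$ may have $2$-torsion and the value is not determined by this equation alone. Here I would argue by descent. Both $f(1_{\cA})$ and the integer $n/2$ are sections of $\cO$ over $S$, so it suffices to verify their equality after an \'etale base change. By Corollary \ref{cor_split_locally} I may pass to an \'etale cover over which $(\cA,\sigma,f)$ becomes the split quadratic triple $(\cA_0,\eta_0,f_0)$ of degree $n$; since any isomorphism of $\cO$--algebras sends $1_{\cA}$ to $1_{\cA_0}$, the problem reduces to computing $f_0(1_{\cA_0})$.

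For the split computation in the even case I would use the model $\cM_0=\cO^{n}$, $q_0(x_1,\dots,x_n)=\sum_{i=1}^{n/2}x_{2i-1}x_{2i}$, $b=b_{q_0}$ of Example \ref{split_example}\ref{split_example_b}, where by Proposition \ref{qpex}\ref{qpex_a} the form $f_0$ is characterized by $(f_0\circ\varphi_b)(m\ot m)=q_0(m)$. A short calculation shows that $b_{q_0}$ has block-hyperbolic Gram matrix in the standard basis $e_1,\dots,e_n$ (so $b_{q_0}(e_a,e_b)=1$ exactly when $\{a,b\}=\{2i-1,2i\}$), whence
$\varphi_b\bigl(\sum_{i}(e_{2i-1}\ot e_{2i}+e_{2i}\ot e_{2i-1})\bigr)=\Id_{\cM_0}=1_{\cA_0}$. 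Calling this tensor $t$, it is fixed by the switch, so $1_{\cA_0}\in\cSym_{\cA_0,\eta_0}$, and polarizing via $e_a\ot e_b+e_b\ot e_a=(e_a+e_b)^{\ot 2}-e_a^{\ot 2}-e_b^{\ot 2}$ gives
$f_0(1_{\cA_0})=(f_0\circ\varphi_b)(t)=\sum_{i=1}^{n/2}b_{q_0}(e_{2i-1},e_{2i})=n/2$. (In the odd case the split model instead has $f_0=\tfrac12\Trd_{\cA_0}$ by Example \ref{split_example}\ref{split_example_c}, giving $f_0(1_{\cA_0})=\tfrac12 n$ at once.) The sheaf property of $\cO$ then transports the equality back down to $S$.

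The main obstacle is conceptual rather than computational: one must resist simply "dividing $2f(1_{\cA})=n$ by $2$", since $2$ need not be a non-zero-divisor on $\cO(S)$, and the content of the statement is that the value is nonetheless the expected one. The only route I see to pin it down is the local reduction to the split case via Corollary \ref{cor_split_locally} followed by the explicit identification of $1_{\cA_0}$ inside $\cSym_{\cA_0,\eta_0}$ and evaluation of $f_0$ through $\varphi_b$; a minor side point to dispatch is the well-posedness of "$n/2\in\cO(S)$" when $n$ is odd, which the invertibility-of-$2$ remark handles.
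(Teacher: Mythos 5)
Your proof is correct and follows essentially the same route as the paper: establish $2f(1_{\cA})=n$, handle odd $n$ via invertibility of $2$, and for even $n$ reduce to the split triple via Corollary \ref{cor_split_locally} and evaluate $f_0(1_{\cA_0})=n/2$ there. The only cosmetic difference is that you compute $f_0(1_{\cA_0})$ directly from the characterization in Proposition \ref{qpex}\ref{qpex_a} by polarization, whereas the paper quotes the explicit formula for $f_0$ from \cite{CF}; both computations agree.
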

\begin{proof}
Since $\cA$ is of constant rank, there will exist a cover $\{T_i \to S\}_{i\in I}$ such that $\cA|_{T_i}\cong \Mat_n(\cO|_{T_i})$ for all $i\in I$. Evaluating locally, where $1_{\cA}|_{T_i}$ is the identity matrix, we have that $2f|_{T_i}(1_{\cA}|_{T_i})=\Trd_{\Mat_n(\cO|_{T_i})}(1_{\cA}|_{T_i})=n$, and hence
\[
2f(1_{\cA}) = n \in \cO(S)
\]
globally as well. If $n$ is odd then $2\in \cO\ti$ and so $f(1_{\cA})=\frac{n}{2}$ makes sense. If $n=2m$ is even, we need to argue that $f(1_{\cA})=m$. We may assume by Corollary \ref{cor_split_locally} that in the above argument, the cover was chosen such that it splits the quadratic triple, and so our local evaluation may be done on the split example. In the calculations of $f_0$ following \cite[2.7.0.31]{CF} the authors show that $f_0(E_{ii}+E_{i+1,i+1})=1$, and therefore it follows immediately that $f_0(1)=m$.
\lv{
By refining the cover if needed so that it consists of affine schemes, we may further assume by Corollary \ref{lem_pair_affine} below that there are elements $\ell_i \in \cA(T_i)$ such that $f|_{T_i}=\Trd_{\cA|_{T_i}}(\ell_i \und)$ and $\ell_i + \eta_0(\ell_i)=1$. The involution $\eta_0$ acts as follows
{\scriptsize \[
\left[ \begin{array}{c;{2pt/2pt}c;{2pt/2pt}c;{2pt/2pt}c}
\begin{array}{cc} a_1 & b_1 \\ c_1 & d_1 \end{array} & * & \cdots & * \\ \hdashline[2pt/2pt]
* & \begin{array}{cc} a_2 & b_2 \\ c_2 & d_2 \end{array} & \cdots & * \\ \hdashline[2pt/2pt]
\vdots & \vdots & & \vdots \\ \hdashline[2pt/2pt]
* & * & \cdots & \begin{array}{cc} a_m & b_m \\ c_m & d_m \end{array}
\end{array}\right] \overset{\eta_0}{\mapsto}
\left[ \begin{array}{c;{2pt/2pt}c;{2pt/2pt}c;{2pt/2pt}c}
\begin{array}{cc} d_1 & b_1 \\ c_1 & a_1 \end{array} & * & \cdots & * \\ \hdashline[2pt/2pt]
* & \begin{array}{cc} d_2 & b_2 \\ c_2 & a_2 \end{array} & \cdots & * \\ \hdashline[2pt/2pt]
\vdots & \vdots & & \vdots \\ \hdashline[2pt/2pt]
* & * & \cdots & \begin{array}{cc} d_m & b_m \\ c_m & a_m \end{array}
\end{array}\right]
\]}%
Therefore, after writing any element $\ell_i$ in this form, it must satisfy $a_j+d_j = 1$ for all $j=0,\ldots,m$. Hence,
\[
f|_{T_i}(1_{\cA}|_{T_i})=\Trd_{\Mat_n(\cO|_{T_i})}(\ell_i)=\sum_{j=1}^m a_j+d_j = m
\]
as desired.%
}
\end{proof}

\begin{example}\label{no_f_example}
In contrast to Example \ref{quafoex}\ref{quafoex_a}, if $2\notin \cO\ti$, then not every orthogonal $\si$ is part of a quadratic pair. For example, this is so for $S=\Spec(R)$ with $2R=0 \ne R$, and the Azumaya $\cO$--algebra $\cA=\Mat_n(\cO)$ equipped with $\tau = $ the transpose involution. Indeed, we would need that
\[
f(E_{11} + \tau(E_{11})) = \Trd_{\Mat_n(\cO)}(E_{11}) = 1,
\]
however $E_{11} + \tau(E_{11}) = 2E_{11} = 0$ and so this cannot happen. Another example where no $f$ exists is given in Example \ref{ex_weakly_tens_not_quad}.
\end{example}

\subsection{Classification of Quadratic Triples}
In light of Example \ref{no_f_example} above, we seek to characterize which orthogonal involutions can participate in a quadratic pair. Furthermore, for a given Azumaya $\cO$--algebra with orthogonal involution $(\cA,\sigma)$, we wish to classify all possible such $f$ which make $(\cA,\sigma,f)$ a quadratic triple. We first provide this classification over affine schemes, beginning with the following.

Given a quadratic triple $(\cA,\si,f)$, it is of the form of Example \ref{quafoex}\ref{quafoex_b} above exactly when $f$ can be extended to a linear form on all of $\cA$.
\begin{lem}\label{lem_extend_l}
Let $(\cA,\si,f)$ be a quadratic triple. Then, the following are equivalent.
\begin{enumerate}[label={\rm (\roman*)}]
\item \label{lem_extend_l_i} There exists a linear form $f'\co \cA \to \cO$ such that $f'|_{\cSym_{\cA,\si}} = f$,
\item \label{lem_extend_l_ii} There exists $\ell \in \cA(S)$ with $\ell+\si(\ell)=1$ such that for all $T\in \Sch_S$ and $s\in \cSym_{\cA,\si}(T)$, we have
\[
f(s)=\Trd_{\cA}(\ell|_T \cdot s).
\]
\end{enumerate}
\end{lem}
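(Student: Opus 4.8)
The plan is to prove the two implications separately, with the regularity of the reduced-trace bilinear form from Lemma~\ref{lem_perp}\ref{lem_perp_i} as the essential input. The implication \ref{lem_extend_l_ii}~$\Rightarrow$~\ref{lem_extend_l_i} is immediate: given $\ell$ as in \ref{lem_extend_l_ii}, set $f' = \Trd_{\cA}(\ell\cdot\und)\co\cA\to\cO$; this is an $\cO$--linear form on all of $\cA$, and by the displayed formula in \ref{lem_extend_l_ii} its restriction to $\cSym_{\cA,\si}$ is $f$.

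For \ref{lem_extend_l_i}~$\Rightarrow$~\ref{lem_extend_l_ii} I would first produce $\ell$. By Lemma~\ref{lem_perp}\ref{lem_perp_i} the trace form $(x,y)\mapsto\Trd_{\cA}(xy)$ on $\cA$ is regular, so the associated morphism $\phi\co\cA\simlgr\cA\ch=\cHom_{\cO}(\cA,\cO)$ is an isomorphism of $\cO$--modules. An $\cO$--linear form $f'\co\cA\to\cO$ is the same as a global section of $\cA\ch$, so there is a unique $\ell\in\cA(S)$ with $\phi(\ell)=f'$, i.e. $f'=\Trd_{\cA}(\ell\cdot\und)$. It remains to check $\ell+\si(\ell)=1$. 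Fix $T\in\Sch_S$ and $a\in\cA(T)$. Since $a+\si(a)\in\cSym_{\cA,\si}(T)$ and $f'$ extends $f$, the quadratic pair relation \eqref{eq_trace_form} gives $\Trd_{\cA}\big(\ell|_T(a+\si(a))\big)=f'(a+\si(a))=f(a+\si(a))=\Trd_{\cA}(a)$. On the other hand, exactly as in Example~\ref{quafoex}\ref{quafoex_b} (using that $\Trd_{\cA}$ is $\si$--invariant and $\si$ an anti-automorphism of order $2$), $\Trd_{\cA}\big(\ell|_T(a+\si(a))\big)=\Trd_{\cA}\big((\ell|_T+\si(\ell|_T))\,a\big)$. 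Comparing, $\Trd_{\cA}\big((\ell|_T+\si(\ell|_T)-1)\,a\big)=0$ for every $T$ and every $a\in\cA(T)$, so $\phi(\ell+\si(\ell)-1)=0$ in $\cA\ch(S)$, and regularity of the trace form forces $\ell+\si(\ell)-1=0$.

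This is essentially a formal argument. The only delicate points are the sheaf-theoretic translations — identifying $\cO$--linear forms $\cA\to\cO$ with global sections of $\cA\ch$, and noting that ``$f'|_{\cSym_{\cA,\si}}=f$'' may be tested on sections because $\cSym_{\cA,\si}$ is locally a direct summand of $\cA$ (Lemma~\ref{locdirsum}) — together with the two appeals to regularity of the trace form, once to produce $\ell$ and once to pin down $\ell+\si(\ell)=1$. I do not anticipate any genuine obstacle.
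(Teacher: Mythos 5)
Your proof is correct and follows essentially the same route as the paper: both directions rest on regularity of the reduced-trace form (Lemma~\ref{lem_perp}\ref{lem_perp_i}), first to write $f'=\Trd_{\cA}(\ell\,\und)$ and then to deduce $\ell+\si(\ell)=1$ from $\Trd_{\cA}\bigl((\ell+\si(\ell)-1)a\bigr)=0$. Your version is if anything slightly more careful than the paper's, which only tests against global sections $a\in\cA(S)$, whereas you correctly test against sections over all $T\in\Sch_S$ before invoking regularity.
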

\begin{proof}
\noindent \ref{lem_extend_l_ii}$\Rightarrow$\ref{lem_extend_l_i}: This implication is clear, see Example \ref{quafoex}\ref{quafoex_b}.

\noindent \ref{lem_extend_l_i}$\Rightarrow$\ref{lem_extend_l_ii}: We follow the second half of the proof of \cite[4.2.0.12]{CF}. The map $f'$ is of the form $f'=\Trd_{\cA}(\ell\und)$ for some $\ell \in \cA(S)$ due to regularity of the trace form by Lemma \ref{lem_perp}\ref{lem_perp_i}. Then, for any $a\in \cA(S)$ we have $\Trd_{\cA}(\ell \si(a))=\Trd_{\cA}(a\si(\ell))=\Trd_{\cA}(\si(\ell)a)$ and so
\[
\Trd_{\cA}(a)=f'(a+\si(a))=\Trd_{\cA}(\ell(a+\si(a)))=\Trd_{\cA}((\ell+\si(\ell))a).
\]
Hence, $\ell+\si(\ell)=1$, once again by regularity of the trace form.
\end{proof}

When $S$ is affine, the construction of Example \ref{quafoex}\ref{quafoex_b} produces all quadratic triples and we have the following classification.
\begin{prop}[Affine Classification]\label{lem_pair_affine}
Let $S$ be an affine scheme and let $(\cA,\sigma)$ be an Azumaya $\cO$--algebra with orthogonal involution. Consider $\xi \colon \cA/\cAlt_{\cA,\sigma} \to \cSymd_{\cA,\sigma}$ from \eqref{eq_big_diagram}.
\begin{enumerate}[label={\rm(\roman*)}]
\item \label{lem_pair_affine_i} Suppose $(\cA,\sigma,f)$ is a quadratic triple. Then there exists $\ell_f \in \cA(S)$ such that $\ell_f+\si(\ell_f)=1_{\cA}$, and for all $T\in \Sch_S$ and $s\in \cSym_{\cA,\si}(T)$, we have
\[
f(s)=\Trd_{\cA}(\ell_f|_T \cdot s).
\]
In particular, $\ell_f+\si(\ell_f)=1_{\cA}$ means that $1_{\cA} \in \cSymd_{\cA,\sigma}(S)$.
\item \label{lem_pair_affine_ii} Assume that $1_{\cA} \in \cSymd_{\cA,\sigma}(S)$. Then, $\xi(S)^{-1}(1_{\cA})$ is a non-empty subset of $(\cA/\cAlt_{\cA,\sigma})(S)$ and the map of sets
\begin{align*}
\left\{\begin{array}{c} f \text{ such that } (\cA,\sigma,f) \\ \text{is a quadratic triple}\end{array}\right\} &\to \xi(S)^{-1}(1_{\cA}) \\
f &\mapsto \overline{\ell_f}
\end{align*}
is a bijection. In particular, for a given $f$ the element $\ell_f$ is uniquely determined up to addition by an element of $\cAlt_{\cA,\si}(S)$.
\item \label{lem_pair_affine_iii} The condition $1_{\cA} \in \cSymd_{\cA,\sigma}(S)$ is necessary and sufficient for there to be a quadratic triple extending $(\cA,\sigma)$.
\end{enumerate}
\end{prop}
\begin{proof}
\noindent \ref{lem_pair_affine_i}: Since $S$ is affine, by Lemma \ref{locdirsum} we know $\cSym_{\cA,\si}$ is a direct summand of $\cA$. Therefore, for any quadratic triple $(\cA,\si,f)$, the function $f$ may be extended to $\cA$. The result then follows from Lemma \ref{lem_extend_l}.

\noindent \ref{lem_pair_affine_ii}: First we show $\xi(S)^{-1}(1_{\cA})$ is non-empty. We have a short exact sequence
\[
0 \to \cSkew_{\cA,\sigma} \to \cA \xrightarrow{\Id+\sigma} \cSymd_{\cA,\sigma} \to 0
\]
with associated long exact sequence
\[
0 \to \cSkew_{\cA,\sigma}(S) \to \cA(S) \to \cSymd_{\cA,\sigma}(S) \to H^1\fppf(S,\cSkew_{\cA,\sigma}) \to \ldots
\]
Since $S$ is affine and $\cSkew_{\cA,\sigma}$ is the kernel of a morphism of quasi-coherent $\cO$--modules, applying Lemma \ref{lem_qc_affine_cohom} gives that $H^1\fppf(S,\cSkew_{\cA,\sigma})=0$ and so $\cA(S) \to \cSymd_{\cA,\sigma}(S)$ is surjective. Hence $\xi(S)^{-1}(1_{\cA}) \neq \O$.

Next we show that the map is bijective as claimed. Lemma \ref{lem_qc_affine_cohom} similarly shows that $H^1\fppf(S,\cAlt_{\cA,\sigma})=0$ and so there is a surjection $\cA(S) \to (\cA/\cAlt_{\cA,\sigma})(S) \cong \cA(S)/\cAlt_{\cA,\sigma}(S)$.

Let $(\cA,\sigma,f_1)$ and $(\cA,\sigma,f_2)$ be two quadratic triples. By \ref{lem_pair_affine_i} we have elements $\ell_{f_1},\ell_{f_2} \in \cA(S)$ defining the linear forms. If $\overline{\ell_{f_1}}=\overline{\ell_{f_2}}$ then $\ell_{f_1}-\ell_{f_2} \in \cAlt_{\cA,\sigma}(S)$, and so by Lemma \ref{lem_perp}\ref{lem_perp_iii} the maps $\Trd_{\cA}(\ell_{f_1}\und)$ and $\Trd_{\cA}(\ell_{f_2}\und)$ agree on $\cSym_{\cA,\sigma}$, i.e., $f_1=f_2$ and so the map in question is injective.

Since $\cA(S) \to (\cA/\cAlt_{\cA,\sigma})(S)$ is surjective and by the commutativity of \eqref{eq_big_diagram}, every element of $\xi(S)^{-1}(1_{\cA})$ is of the form $\overline{\ell}$ for some $\ell\in \cA(S)$ with $\ell+\sigma(\ell)=1$. We may then use Example \ref{quafoex}\ref{quafoex_b} to see that $f=\Trd_{\cA}(\ell\und)|_{\cSym_{\cA,\sigma}}$ extends $(\cA,\sigma)$ to a quadratic triple. The process in \ref{lem_pair_affine_i} then produces a possibly different $\ell_f \in \cA(S)$ such that $f=\Trd_{\cA}(\ell_f \und)|_{\cSym_{\cA,\sigma}}$ also. Since this is the same function on $\cSym_{\cA,\sigma}$, we use Lemma \ref{lem_perp}\ref{lem_perp_iii} again to see that $\ell-\ell_f \in \cAlt_{\cA,\sigma}(S)$ and so $\overline{\ell}=\overline{\ell_f}$. Therefore the map in question is surjective and we are done.

\noindent \ref{lem_pair_affine_iii}: This follows from \ref{lem_pair_affine_i} and \ref{lem_pair_affine_ii}.
\end{proof}

\begin{remark}\label{rem_affine_classification}
When $S=\Spec(R)$ is an affine scheme we are dealing with an Azumaya $R$--algebra $A=\cA(S)$ and an orthogonal involution $\sigma(S)$ on $A$, which by abuse of notation we also denote $\sigma$. Then, $\Symm(A,\sigma)=\cSym_{\cA,\sigma}(S)$ and a quadratic triple is given by an $R$--linear map $f\colon \Symm(A,\sigma) \to R$ satisfying $f(a+\sigma(a))=\Trd_A(a)$ for all $a\in A$. By Lemma \ref{lem_qc_affine_cohom}\ref{lem_qc_affine_cohom_ii} implying that $H^1(S,\cSym_{\cA,\sigma})=0$ and $H^1(S,\cSkew_{\cA,\sigma})=0$, we know that $\cA(S)\to \cAlt_{\cA,\sigma}(S)$ and $\cA(S)\to \cSymd_{\cA,\sigma}(S)$ are surjective, thus
\begin{align*}
\Alt(A,\sigma) = \cAlt_{\cA,\sigma}(S) &= \{a-\sigma(a) \mid a\in A\}, \\
\Symd(A,\sigma) =\cSymd_{\cA,\sigma}(S) &= \{a+\sigma(a) \mid a\in A\}.
\end{align*}
Furthermore, since $\sigma$ is orthogonal, $\cAlt_{\cA,\sigma}$ is finite locally free by Lemma \ref{locdirsum}\ref{locdirsum_ii} and hence is quasi-coherent, meaning $H^1(S,\cAlt_{\cA,\sigma})=0$ as well. This means that $\cA(S) \to (\cA/\cAlt_{\cA,\sigma})(S)$ is surjective and so $(\cA/\cAlt_{\cA,\sigma})(S)= A/\Alt(A,\sigma)$. The map $\xi$ of \eqref{eq_big_diagram} behaves over $S$ as
\begin{align*}
\xi \colon A/\Alt(A,\sigma) &\to \Symd(A,\sigma) \\
\overline{a} &\mapsto a+\sigma(a)
\end{align*}
which is a well-defined linear map. Then Proposition \ref{lem_pair_affine} says that any quadratic triple $(A,\sigma,f)$ is of the form $f=\Trd_A(\ell\und)$ for some $\ell\in A$ with $\ell+\sigma(\ell)=1$. Furthermore, having $1\in \Symd(A,\sigma)$ is sufficient to extend $(A,\sigma)$ to a quadratic triple since the preimage $\xi^{-1}(1)$ is non-empty. In this case, all such linear forms extending $(A,\sigma)$ are classified by $\xi^{-1}(1) \subseteq A/\Alt(A,\sigma)$. Over fields this recovers the classification (for fixed $\sigma$) given in \cite[\S 5]{KMRT}.
\end{remark}
By considering an affine open cover, Proposition \ref{lem_pair_affine}\ref{lem_pair_affine_i} recovers \cite[4.2.0.12]{CF}, which states that Zariski locally a quadratic triple is of the form in Example \ref{quafoex}\ref{quafoex_b}.
\sm

Given an Azumaya $\cO$--algebra with orthogonal involution $(\cA,\sigma)$, Proposition \ref{lem_pair_affine}\ref{lem_pair_affine_iii} states that when $S$ is affine, the condition $1_{\cA} \in \cSymd_{\cA,\sigma}(S)$ is necessary and sufficient for there to be a quadratic triple extending $(\cA,\sigma)$. We show now that this condition is necessary in general, however Lemma \ref{lem_serre} shows that it is no longer sufficient.

\begin{lem}\label{lem_1_in_Symd}
Let $(\cA,\si,f)$ be a quadratic triple. Then, $1_{\cA} \in \cSymd_{\cA,\si}(S)$.
\end{lem}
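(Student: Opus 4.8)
The plan is to reduce to the affine case, where Proposition \ref{lem_pair_affine} applies, and then to globalize using the fact that $\cSymd_{\cA,\si}$ is a \emph{sheaf}-theoretic image.

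First I would choose a Zariski — hence fppf — cover $\{U_i \to S\}_{i\in I}$ of $S$ by affine schemes. For each $i$, the restriction $(\cA|_{U_i},\si|_{U_i},f|_{U_i})$ is again a quadratic triple: restriction to the site $\Sch_{U_i}$ commutes with kernels, so $\cSym_{\cA|_{U_i},\si|_{U_i}} = \cSym_{\cA,\si}|_{U_i}$, and the defining identity \eqref{eq_trace_form} is inherited by restriction. Since $U_i$ is affine, Proposition \ref{lem_pair_affine}\ref{lem_pair_affine_i} produces an element $\ell_i \in \cA(U_i)$ with $\ell_i + \si(\ell_i) = 1_{\cA}|_{U_i}$. (Alternatively, one may bypass Proposition \ref{lem_pair_affine} entirely: on the affine scheme $U_i$ the module $\cSym_{\cA,\si}|_{U_i}$ is a direct summand of $\cA|_{U_i}$ by Lemma \ref{locdirsum}\ref{locdirsum_i}, so $f|_{U_i}$ extends to a linear form on $\cA|_{U_i}$, and then Lemma \ref{lem_extend_l} supplies such an $\ell_i$.)

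It remains to conclude. The relation $\ell_i + \si(\ell_i) = 1_{\cA}|_{U_i}$ exhibits $1_{\cA}|_{U_i}$ as a section lying in the image of the map $(\Id+\si)(U_i)\colon \cA(U_i) \to \cA(U_i)$, i.e. in the presheaf image. Since $\cSymd_{\cA,\si} = \Ima(\Id+\si)$ is by definition the fppf-sheafification of that presheaf image, a section $s \in \cA(S)$ lies in $\cSymd_{\cA,\si}(S)$ precisely when there is a cover of $S$ over which $s$ restricts into the presheaf image; the cover $\{U_i \to S\}_{i\in I}$ does exactly this for $s = 1_{\cA}$. Hence $1_{\cA} \in \cSymd_{\cA,\si}(S)$.

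There is essentially no real obstacle here; the only point requiring care is to not confuse $\cSymd_{\cA,\si}$ with the (generally non-sheaf) presheaf image of $\Id+\si$: globally $1_{\cA}$ need \emph{not} be of the form $\ell + \si(\ell)$ with $\ell \in \cA(S)$ — precisely the phenomenon measured later by the obstruction classes $\Strong(\cA,\si)$ and $\Weak(\cA,\si)$ — whereas membership in the image sheaf only demands local liftability, which the affine case provides.
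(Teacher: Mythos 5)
Your proof is correct and follows essentially the same route as the paper: restrict to an affine cover, invoke Proposition \ref{lem_pair_affine}\ref{lem_pair_affine_i} to obtain $\ell_i$ with $\ell_i+\si(\ell_i)=1$, and then use the sheaf property of $\cSymd_{\cA,\si}$ to conclude globally. Your closing remark correctly identifies the one subtlety (sheaf image versus presheaf image), which the paper handles implicitly.
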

\begin{proof}
We know by Proposition \ref{lem_pair_affine}\ref{lem_pair_affine_i} that for an affine cover $\{U_i\to S\}_{i\in I}$ of $S$ we have that
\[
f|_{U_i} = \Trd_{\cA}(\ell_i \und)
\]
for some $\ell_i \in \cA(U_i)$ with $\ell_i + \si(\ell_i)=1_{\cA(U_i)}$. In particular, $1_{\cA(U_i)} \in \cSymd_{\cA,\si}(U_i)$. Since the elements $1_{\cA(U_i)}$ clearly agree on overlaps of the cover $\{U_i\to S\}_{i\in I}$ and $\cSymd_{\cA,\si}$ is a sheaf, we obtain that $1_{\cA} \in \cSymd_{\cA,\si}(S)$.
\end{proof}

\begin{lem}\label{lem_smart_equiv}
Let $(\cA,\si)$ be an Azumaya $\cO$--algebra with orthogonal involution. Then the following are equivalent.
\begin{enumerate}[label={\rm (\roman*)}]
\item \label{lem_smart_equiv_i} $1_{\cA} \in \cSymd_{\cA,\si}(S)$
\item \label{lem_smart_equiv_ii} For all affine $U\in \Sch_S$, there exists $f_U\co \cSym_{\cA|_U,\si|_U} \to \cO|_U$ such that $(\cA|_U,\si|_U,f_U)$ is a quadratic triple.
\item \label{lem_smart_equiv_iii} For all affine open subschemes $U\subseteq S$, there exists $f_U\co \cSym_{\cA|_U,\si|_U} \to \cO|_U$ such that $(\cA|_U,\si|_U,f_U)$ is a quadratic triple.
\item \label{lem_smart_equiv_iv} There exists an fppf cover $\{T_i\to S\}_{i\in I}$ such that for each $i\in I$, there exists $f_i \co \cSym_{\cA|_{T_i},\si|_{T_i}}\to \cO|_{T_i}$ such that $(\cA|_{T_i},\si|_{T_i}, f_i)$ is a quadratic triple.
\item \label{lem_smart_equiv_v} There exists an fppf cover $\{T_i \to S\}_{i\in I}$ and regular quadratic forms $(\cM_i,q_i)$ over $\cO|_{T_i}$ such that for each $i \in I$,
\[
(\cA|_{T_i},\si|_{T_i})\cong (\cEnd_{\cO|_{T_i}}(\cM_i),\eta_q)
\]
where $\eta_q$ is the adjoint involution of the polar bilinear form of $q$.
\end{enumerate}
\end{lem}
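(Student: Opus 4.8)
The plan is to establish the web of implications
$\ref{lem_smart_equiv_i}\Rightarrow\ref{lem_smart_equiv_ii}\Rightarrow\ref{lem_smart_equiv_iii}\Rightarrow\ref{lem_smart_equiv_i}$, together with
$\ref{lem_smart_equiv_ii}\Rightarrow\ref{lem_smart_equiv_iv}\Rightarrow\ref{lem_smart_equiv_i}$ and
$\ref{lem_smart_equiv_iv}\Leftrightarrow\ref{lem_smart_equiv_v}$, which jointly give all five equivalences. The one preliminary remark to record up front is that restriction of fppf sheaves to $\Sch_T$ is exact, so that for every $T\in\Sch_S$ one has $\cSymd_{\cA,\si}|_T=\cSymd_{\cA|_T,\si|_T}$ and $\cSym_{\cA,\si}|_T=\cSym_{\cA|_T,\si|_T}$ inside $\cA|_T$; hence the condition $1_{\cA}\in\cSymd_{\cA,\si}(S)$ is genuinely local, and since $\cSymd_{\cA,\si}$ is a sheaf for the fppf topology, the identity sections over the members of any cover glue back to $1_{\cA}$.

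For $\ref{lem_smart_equiv_i}\Rightarrow\ref{lem_smart_equiv_ii}$ I would restrict $1_{\cA}$ to an affine $U\in\Sch_S$ and apply the Affine Classification, Proposition~\ref{lem_pair_affine}\ref{lem_pair_affine_ii}: from $1_{\cA|_U}\in\cSymd_{\cA|_U,\si|_U}(U)$ it gives that $\xi(U)^{-1}(1_{\cA|_U})$ is non-empty and in bijection with the linear forms $f_U$ making $(\cA|_U,\si|_U,f_U)$ a quadratic triple, so such an $f_U$ exists. Then $\ref{lem_smart_equiv_ii}\Rightarrow\ref{lem_smart_equiv_iii}$ is immediate, an affine open subscheme of $S$ being an affine object of $\Sch_S$, and $\ref{lem_smart_equiv_ii}\Rightarrow\ref{lem_smart_equiv_iv}$ is immediate, an affine open cover of $S$ being an fppf cover. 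Both $\ref{lem_smart_equiv_iii}\Rightarrow\ref{lem_smart_equiv_i}$ and $\ref{lem_smart_equiv_iv}\Rightarrow\ref{lem_smart_equiv_i}$ are then the gluing argument already used for Lemma~\ref{lem_1_in_Symd}: over a member $T$ of an affine open cover (resp.\ of the given fppf cover) a quadratic triple exists, so $1_{\cA|_T}\in\cSymd_{\cA|_T,\si|_T}(T)=\cSymd_{\cA,\si}(T)$ by Lemma~\ref{lem_1_in_Symd}, and these sections glue to $1_{\cA}\in\cSymd_{\cA,\si}(S)$.

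The only non-formal part is $\ref{lem_smart_equiv_iv}\Leftrightarrow\ref{lem_smart_equiv_v}$. For $\ref{lem_smart_equiv_v}\Rightarrow\ref{lem_smart_equiv_iv}$ I would apply Proposition~\ref{qpex}\ref{qpex_a} to each $(\cM_i,q_i)$ to obtain $f_{q_i}$ with $(\eta_{q_i},f_{q_i})$ a quadratic pair, then transport it along the isomorphism $(\cA|_{T_i},\si|_{T_i})\cong(\cEnd_{\cO|_{T_i}}(\cM_i),\eta_{q_i})$, using that an isomorphism of Azumaya algebras preserves reduced traces so the semi-trace identity \eqref{eq_trace_form} is preserved. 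For $\ref{lem_smart_equiv_iv}\Rightarrow\ref{lem_smart_equiv_v}$, the cover given in $\ref{lem_smart_equiv_iv}$ need not neutralize $(\cA,\si)$, so I would take the \'etale cover $\{V_j\to S\}_j$ of \cite[2.7.0.25]{CF} over which $(\cA|_{V_j},\si|_{V_j})\cong(\cEnd_{\cO|_{V_j}}(\cM_j),\eta_{b_j})$ with $b_j$ regular symmetric, pass to the fppf cover $\{T_i\times_S V_j\to S\}_{i,j}$, restrict $f_i$ to a quadratic triple on $\cA|_{T_i\times_S V_j}$ (quadratic triples being stable under base change), transport this triple along the restricted neutralizing isomorphism, and apply Proposition~\ref{qpex}\ref{qpex_b} to conclude that $b_j$ restricted to $T_i\times_S V_j$ is the polar form of a regular quadratic form $q$, whence $\eta_{b_j}=\eta_q$ there; this is exactly $\ref{lem_smart_equiv_v}$ for the refined cover.

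The step I expect to demand the most care is this last implication $\ref{lem_smart_equiv_iv}\Rightarrow\ref{lem_smart_equiv_v}$: one must marry the ``abstract'' fppf cover produced by $\ref{lem_smart_equiv_iv}$ with the \'etale neutralizing cover of \cite[2.7.0.25]{CF} on a common refinement so that Proposition~\ref{qpex}\ref{qpex_b} becomes applicable, and verify that restricting a quadratic triple along a base change and transporting it across an Azumaya isomorphism behave as claimed. Everything else is sheaf bookkeeping, routine once the exactness of restriction and the sheaf property of $\cSymd_{\cA,\si}$ have been noted.
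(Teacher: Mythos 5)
Your proof is correct and follows essentially the same route as the paper's: the same implication web (up to a harmless rearrangement), the same gluing via Lemma~\ref{lem_1_in_Symd} to return to \ref{lem_smart_equiv_i}, and the same use of Proposition~\ref{qpex} together with a refinement by the neutralizing \'etale cover of \cite[2.7.0.25]{CF} for \ref{lem_smart_equiv_iv}$\Leftrightarrow$\ref{lem_smart_equiv_v}. The only cosmetic difference is that for \ref{lem_smart_equiv_i}$\Rightarrow$\ref{lem_smart_equiv_ii} you invoke Proposition~\ref{lem_pair_affine}\ref{lem_pair_affine_ii} rather than re-running the underlying vanishing $H^1\fppf(U,\cSkew_{\cA,\si})=0$ for quasi-coherent modules on affines, which is the same mathematics since that vanishing is exactly what proves the affine classification.
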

\begin{proof}
We argue that \ref{lem_smart_equiv_i}$\Rightarrow$\ref{lem_smart_equiv_ii}$\Rightarrow$\ref{lem_smart_equiv_iii}$\Rightarrow$\ref{lem_smart_equiv_iv}$\Rightarrow$\ref{lem_smart_equiv_i}, and then that \ref{lem_smart_equiv_iv}$\Leftrightarrow$\ref{lem_smart_equiv_v}.

\noindent \ref{lem_smart_equiv_i}$\Rightarrow$\ref{lem_smart_equiv_ii}: Assume $1_{\cA} \in \cSymd_{\cA,\si}(S)$. Then, for an affine $U \in \Sch_S$, by restriction we have that $1_{\cA(U)} \in \cSymd_{\cA,\si}(U)$. Hence Proposition \ref{lem_pair_affine}\ref{lem_pair_affine_ii} guarantees that there is at least one semi-trace $f_U\co \cSym_{\cA|_U,\si|_U} \to \cO|_U$ such that $(\cA|_U,\si|_U,f_U)$ is a quadratic triple.

\noindent \ref{lem_smart_equiv_ii}$\Rightarrow$\ref{lem_smart_equiv_iii} is obvious, and \ref{lem_smart_equiv_iii}$\Rightarrow$\ref{lem_smart_equiv_iv} by taking an affine open cover of $S$.

\noindent \ref{lem_smart_equiv_iv}$\Rightarrow$\ref{lem_smart_equiv_i}: Let such a cover be given. By Lemma \ref{lem_1_in_Symd} we have $1_{\cA(T_i)}\in \cSymd_{\cA,\si}(T_i)$ for all $i\in I$. These agree on overlaps and so $1_{\cA} \in \cSymd_{\cA,\si}(S)$ by the sheaf property.

\noindent \ref{lem_smart_equiv_iv}$\Rightarrow$\ref{lem_smart_equiv_v}: Since $(\cA,\si)$ is locally isomorphic to a neutral algebra, we may assume by refining the given cover that $(\cA|_{T_i},\si|_{T_i}, f_i) = (\cEnd_{\cO}(\cM_i),\si|_{T_i},f_i)$ for some $\cO|_{T_i}$--modules $\cM_i$. There are then quadratic forms $q_i\co \cM_i \to \cO|_{T_i}$ whose polar bilinear form has adjoint involution $\si|_{T_i}$ by Proposition \ref{qpex}\ref{qpex_b}.

\noindent \ref{lem_smart_equiv_v}$\Rightarrow$\ref{lem_smart_equiv_iv}: This is Proposition \ref{qpex}\ref{qpex_a}.
\end{proof}

\begin{defn}[Locally Quadratic Involution]\label{defn_locally_quadratic}
We call an orthogonal involution on an Azumaya $\cO$--algebra \emph{locally quadratic} if it satisfies the equivalent conditions of Lemma \ref{lem_smart_equiv}.
\end{defn}

We now give two more examples of constructions which produce quadratic triples.
\begin{examples}\label{ex_cover_construction}
Let $(\cA,\sigma)$ be an Azumaya $\cO$--algebra with orthogonal involution.
\begin{enumerate}[wide, labelindent=0pt, label={\rm (\alph*)}]
\item \label{quafoex_c} Given an fppf cover $\{T_i \to S\}_{i\in I}$ and elements $\ell_i \in \cA(T_i)$ satisfying $\ell_i + \si(\ell_i) = 1$ and $\ell_i|_{T_{ij}} - \ell_j|_{T_{ij}} \in \cAlt_{\cA, \si}(T_{ij})$ for all $i,j\in I$, we can construct a unique quadratic triple $(\cA,\si,f)$ such that $f|_{T_i} = \Trd_{\cA}(\ell_i\und)$ holds for all $i\in I$. To do so, locally define maps $f_i \co \cSym_{\cA, \si}|_{T_i} \to \cO|_{T_i}$ by restricting the linear forms $\Trd_{\cA}(\ell_i\und)$ to $\cSym_{\cA, \si}|_{T_i}$. By assumption and Lemma \ref{lem_perp}\ref{lem_perp_ii}, these local maps agree on overlaps of the cover $\{T_i \to S\}_{i\in I}$ and therefore glue together to define a global linear form $f\co \cSym_{\cA,\si}\to\cO$. The fact that $(\cA, \si, f)$ is a quadratic triple can be checked locally on the $T_i$, where it follows from Example~\ref{quafoex}\ref{quafoex_b}.
\item \label{ex_cover_construction_b} Let $\sigma$ be a locally quadratic involution. Let $\xi \colon \cA/\cAlt_{\cA,\sigma} \to \cSymd_{\cA,\sigma}$ be the map of \eqref{eq_big_diagram}. Given an element $\lambda \in \xi(S)^{-1}(1_{\cA}) \subseteq (\cA/\cAlt_{\cA,\sigma})(S)$, we define a quadratic triple $(\cA,\sigma,f_{\lambda})$ as follows. There will exist a cover $\{T_i \to S\}_{i\in I}$ and elements $\ell_i \in \cA(T_i)$ such that $\overline{\ell_i} = \lambda|_{T_i}$. By the commutativity of \eqref{eq_big_diagram} and the fact that $\xi(\lambda|_{T_i})=1$, each $\ell_i$ will satisfy $\ell_i + \sigma(\ell_i)=1$. Since $(\lambda|_{T_i})|_{T_{ij}} = (\lambda|_{T_j})|_{T_{ij}}$, we will have that $\ell_i|_{T_{ij}}-\ell_j|_{T_{ij}} \in \cAlt_{\cA,\sigma}(T_{ij})$ and hence \ref{quafoex_c} above applies to construct $f_{\lambda}$. Since we can always compare over a common refinement, it is clear that $f_\lambda$ is independent of which cover was chosen.
\end{enumerate}
\end{examples}

We are now able to show that all quadratic triples arise from the construction of Examples \ref{ex_cover_construction}\ref{ex_cover_construction_b}. This gives a classification of all quadratic triples which extend a given locally quadratic involution. In contrast to Proposition \ref{lem_pair_affine}\ref{lem_pair_affine_ii}, in the following Theorem the set $\xi(S)^{-1}(1_{\cA})$ may be empty, see Lemma \ref{lem_serre}.
\begin{thm}[Classification]\label{thm_classification}
Let $(\cA,\sigma)$ be an Azumaya $\cO$--algebra with locally quadratic involution. Let $\xi \colon \cA/\cAlt_{\cA,\sigma} \to \cSymd_{\cA,\sigma}$ be the map of \eqref{eq_big_diagram}.
\begin{enumerate}[label={\rm(\roman*)}]
\item \label{thm_classification_i} Suppose $(\cA,\sigma,f)$ is a quadratic triple. Then, there exists $\lambda_f \in \xi(S)^{-1}(1_{\cA})$ such that $f$ is the linear form $f_{\lambda_f}$ of Example \ref{ex_cover_construction}\ref{ex_cover_construction_b}.
\item \label{thm_classification_ii} The map of sets
\begin{align*}
\left\{\begin{array}{c} f \text{ such that } (\cA,\sigma,f) \\ \text{is a quadratic triple}\end{array}\right\} &\to \xi(S)^{-1}(1_{\cA}) \\
f &\mapsto \lambda_f
\end{align*}
is a bijection.
\end{enumerate}
\end{thm}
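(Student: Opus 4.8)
The plan is to prove part~(i) by an explicit gluing argument, and then to deduce part~(ii) formally from it.

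For (i), I would start with a quadratic triple $(\cA,\sigma,f)$ and fix an affine cover $\{U_i\to S\}_{i\in I}$ in $\Sch_S$ (the involution $\sigma$ is automatically locally quadratic here, by Lemma~\ref{lem_1_in_Symd}). Applying Proposition~\ref{lem_pair_affine}\ref{lem_pair_affine_i} over each $U_i$ produces $\ell_i\in\cA(U_i)$ with $\ell_i+\sigma(\ell_i)=1_{\cA}$ and $f|_{U_i}=\Trd_{\cA}(\ell_i\,\und)$ on $\cSym_{\cA,\sigma}|_{U_i}$. On an overlap $U_{ij}$ the two elements $\ell_i|_{U_{ij}}$ and $\ell_j|_{U_{ij}}$ both induce $f|_{U_{ij}}$ on symmetric elements, so their difference lies in $\cSym_{\cA,\sigma}^{\perp}(U_{ij})=\cAlt_{\cA,\sigma}(U_{ij})$ by Lemma~\ref{lem_perp}\ref{lem_perp_iii} (equivalently, by the criterion recorded in Example~\ref{quafoex}\ref{quafoex_b}). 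Hence the classes $\overline{\ell_i}\in(\cA/\cAlt_{\cA,\sigma})(U_i)$ agree on overlaps and, $\cA/\cAlt_{\cA,\sigma}$ being an fppf sheaf, glue to a section $\lambda_f\in(\cA/\cAlt_{\cA,\sigma})(S)$; by commutativity of \eqref{eq_big_diagram}, $\xi(S)(\lambda_f)$ restricts to $\ell_i+\sigma(\ell_i)=1_{\cA}$ on each $U_i$, so $\lambda_f\in\xi(S)^{-1}(1_{\cA})$. To finish (i) I would then check $f=f_{\lambda_f}$: since the linear form $f_\lambda$ of Example~\ref{ex_cover_construction}\ref{ex_cover_construction_b} does not depend on the cover or on the local lifts used to build it, I may compute $f_{\lambda_f}$ using the very cover $\{U_i\}$ and the same $\ell_i$ (which satisfy $\overline{\ell_i}=\lambda_f|_{U_i}$), giving $f_{\lambda_f}|_{U_i}=\Trd_{\cA}(\ell_i\,\und)|_{\cSym_{\cA,\sigma}}=f|_{U_i}$ for all $i$; and since a morphism of fppf sheaves is determined by its restrictions to a cover, $f=f_{\lambda_f}$.

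For (ii), I would note that Example~\ref{ex_cover_construction}\ref{ex_cover_construction_b} gives a well-defined map $\lambda\mapsto f_\lambda$ from $\xi(S)^{-1}(1_{\cA})$ to the set of quadratic triples extending $(\cA,\sigma)$, that part~(i) is precisely its surjectivity, and that it remains to prove injectivity. For that, suppose $f_{\lambda_1}=f_{\lambda_2}=:f$; running the construction of (i) on $f$ produces $\lambda_f\in\xi(S)^{-1}(1_{\cA})$, and I would show $\lambda_1=\lambda_f=\lambda_2$. Since all three are sections of $\cA/\cAlt_{\cA,\sigma}$, it is enough to compare them over a common affine refinement of the covers appearing in the three constructions: on each piece $W$, the restrictions $\lambda_f|_W$ and $\lambda_1|_W$ are classes of elements of $\cA(W)$ both inducing $f|_W$ on symmetric elements, hence they differ by an element of $\cAlt_{\cA,\sigma}(W)$ by Lemma~\ref{lem_perp}\ref{lem_perp_iii}, so $\lambda_f|_W=\lambda_1|_W$; the sheaf property gives $\lambda_1=\lambda_f$, and symmetrically $\lambda_2=\lambda_f$. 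Thus $\lambda\mapsto f_\lambda$ is a bijection, whose inverse $f\mapsto\lambda_f$ is the map of the theorem (in particular it is well defined).

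The hard part will only be the bookkeeping: one must be sure that $\lambda_f$ in (i) is genuinely independent of the affine cover and of the chosen $\ell_i$ on each piece. This is exactly where the orthogonality relation $\cSym_{\cA,\sigma}^{\perp}=\cAlt_{\cA,\sigma}$ of Lemma~\ref{lem_perp}\ref{lem_perp_iii} does the work, turning ``$\ell_i$ and $\ell_j$ represent the same semi-trace on symmetric elements'' into ``$\ell_i\equiv\ell_j$ modulo $\cAlt_{\cA,\sigma}$''; everything else is gluing in the sheaf $\cA/\cAlt_{\cA,\sigma}$ and passing to common refinements, and no new cohomological input is needed (the vanishing of $H^1$ over affines having already been used in Proposition~\ref{lem_pair_affine}).
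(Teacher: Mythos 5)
Your proposal is correct and follows essentially the same route as the paper: apply Proposition~\ref{lem_pair_affine}\ref{lem_pair_affine_i} over an affine cover, use Lemma~\ref{lem_perp}\ref{lem_perp_iii} to see that the classes $\overline{\ell_i}$ agree on overlaps, glue in the sheaf $\cA/\cAlt_{\cA,\sigma}$ to get $\lambda_f$, and deduce bijectivity from the affine classification together with the sheaf property. Your injectivity argument for (ii) merely spells out what the paper compresses into ``bijectivity is clear from the proof of (i) since the correspondence is bijective over each $U_i$.''
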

\begin{proof}
Both claims follow from Proposition \ref{lem_pair_affine} and the fact that the functor
\begin{align*}
\xi^{-1}(1_{\cA}) \colon \Sch_S &\to \Sets \\
T &\mapsto \xi(T)^{-1}(1_{\cA}|_T)
\end{align*}
is a subsheaf of $\cA/\cAlt_{\cA,\sigma}$. In detail:

\noindent\ref{thm_classification_i}: Let $(\cA,\sigma,f)$ be a quadratic triple. Let $\{U_i \to S\}_{i\in I}$ be an affine cover of $S$. Then $(\cA|_{U_i},\sigma|_{U_i},f|_{U_i})$ is a quadratic triple over an affine scheme, and so by Proposition \ref{lem_pair_affine} there exists an $\ell_i \in \cA(U_i)$ such that $\overline{\ell_i} \in \xi^{-1}(1_{\cA})(U_i)$ is the unique element corresponding to $f|_{U_i}$. Since $(f|_{U_i})|_{U_{ij}} = (f|_{U_j})|_{U_{ij}}$  we have that $\ell_i|_{U_{ij}}-\ell_j|_{U_{ij}} \in \cAlt_{\cA,\sigma}(U_{ij})$ and so $\overline{\ell_i}|_{U_{ij}} = \overline{\ell_j}|_{U_{ij}}$. Therefore, these sections glue together into an element $\lambda_f \in \xi^{-1}(1_{\cA})(S)$. It is then clear that $f=f_{\lambda_f}$ since $f|_{U_i} = \Trd_{\cA}(\lambda_f|_{U_i}\cdot\und)=f_{\lambda_f}|_{U_i}$ for all $i\in I$ by construction.

\noindent\ref{thm_classification_ii}: Bijectivity is clear from the proof of \ref{thm_classification_i} since the correspondence $f|_{U_i} \rightarrow \lambda_f|_{U_i}$ will be bijective over each $U_i$ in the chosen affine cover.
\end{proof}
\begin{remark}\label{rem_skew_alt_torsor}
We may equivalently view Theorem \ref{thm_classification} as saying that there is an isomorphism of sheaves between
\begin{align*}
\cF \colon \Sch_S &\to \Sets \\
T &\mapsto \left\{\begin{array}{c} f \text{ such that } (\cA|_T,\sigma|_T,f) \\ \text{is a quadratic triple over } T\end{array}\right\}
\end{align*}
and the sheaf $\xi^{-1}(1_{\cA})$. Taking part of \eqref{eq_big_diagram}, we have an exact sequence
\[
0 \to \cSkew_{\cA,\sigma}/\cAlt_{\cA,\sigma} \to \cA/\cAlt_{\cA,\sigma} \xrightarrow{\xi} \cSymd_{\cA,\sigma} \to 0
\]
with corresponding long exact sequence
\[
\ldots \to (\cA/\cAlt_{\cA,\sigma})(S) \xrightarrow{\xi(S)} \cSymd_{\cA,\sigma}(S) \xrightarrow{\delta'} H^1(S,\cSkew_{\cA,\sigma}/\cAlt_{\cA,\sigma}) \to \ldots
\]
By Lemma \ref{lem_torsor_connecting}, $\delta'(1_{\cA}) \in H^1(S,\cSkew_{\cA,\sigma}/\cAlt_{\cA,\sigma})$ is the isomorphism class of $\xi^{-1}(1_{\cA})$. In Definition \ref{defn_obs} below we call this element the weak obstruction.
\end{remark}

\section{Quadratic Pairs on Tensor Products}\label{tens}

Given two Azumaya $\cO$--algebras $\cA_1$ and $\cA_2$, their tensor product $\cA_1\otimes_{\cO} \cA_2$ is again an Azumaya $\cO$--algebra. If in addition these algebras are equipped with involutions $\si_1$ and $\si_2$ respectively, then $(\cA_1\otimes_{\cO} \cA_2,\si_1\otimes \si_2)$ is an algebra with involution. Throughout this section we will use the notation
\[
(\cA,\si):=(\cA_1\otimes_{\cO} \cA_2,\si_1\otimes \si_2).
\]
Let $\varepsilon_1,\varepsilon_2 \in \bmu_2(S)$ be the types of $\si_1$ and $\si_2$. Since the type of an involution can be determined locally, we may apply \cite[8.1.3(1)]{K} which applies over affine schemes, to see that the type of $\si_1\otimes \si_2$ will be the product $\varepsilon_1\varepsilon_2$. For our purposes we want $\si_1\otimes \si_2$ to be orthogonal, so we will focus on two cases: when $\si_i$ are both orthogonal ($\varepsilon_i=1$), and when they are both symplectic (in which case $\varepsilon_i=-1$). In preparation, we generalize \cite[5.17]{KMRT} to our setting. We use the notation $\cF^\sharp$ to denote the fppf sheafification of a presheaf $\cF$ as in \cite[Tag 03NS]{Stacks}.

\begin{lem}\label{lem_tensor_Symdecomp}
Let $(\cA_1,\si_1)$ and $(\cA_2,\si_2)$ be two Azumaya $\cO$--algebras with involution. If $\si_i$ are both orthogonal, then
\begin{enumerate}[label={\rm (\roman*)}]
\item \label{lem_tensor_Symdecomp_i} $\cSym_{\cA,\si} = \big(\cSymd_{\cA,\si} + (\cSym_{\cA_1,\si_1}\otimes_{\cO} \cSym_{\cA_2,\si_2})\big)^\sharp$,
\end{enumerate}
If instead $\si_i$ are both weakly symplectic, then
\begin{enumerate}[label={\rm (\roman*)}]
\setcounter{enumi}{1}
\item \label{lem_tensor_Symdecomp_iii} $\cSym_{\cA,\si} = \big(\cSymd_{\cA,\si} + (\cSkew_{\cA_1,\si_1}\otimes_{\cO} \cSkew_{\cA_2,\si_2})\big)^\sharp$.
\end{enumerate}
\end{lem}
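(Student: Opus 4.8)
The plan is to dispose of the inclusion ``$\supseteq$'' directly, reduce the inclusion ``$\subseteq$'' by fppf localization to an identity of modules over a commutative ring, and settle that identity by the same basis computation used in \cite[5.17]{KMRT} and in the proof of Lemma \ref{locdirsum}. For ``$\supseteq$'' in both \ref{lem_tensor_Symdecomp_i} and \ref{lem_tensor_Symdecomp_iii}: one has $\cSymd_{\cA,\si}\subseteq\cSym_{\cA,\si}$ because $x+\si(x)$ is always $\si$--fixed, and the natural map $\cSym_{\cA_1,\si_1}\ot_{\cO}\cSym_{\cA_2,\si_2}\to\cA$ (resp.\ $\cSkew_{\cA_1,\si_1}\ot_{\cO}\cSkew_{\cA_2,\si_2}\to\cA$) takes values in $\cSym_{\cA,\si}$, since on local sections $(\si_1\ot\si_2)(s_1\ot s_2)=\si_1(s_1)\ot\si_2(s_2)$ equals $s_1\ot s_2$ whenever both $s_i$ are symmetric, and also whenever both are skew-symmetric, the two sign changes cancelling because $\veps_1\veps_2=1$. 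As $\cSym_{\cA,\si}=\Ker(\Id-\si)$ is an fppf sheaf, it then contains the sheafification of the presheaf sum on the right.

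For ``$\subseteq$'', both sides are fppf sheaves, so the equality may be checked after restriction along an fppf cover of $S$. By \cite[2.7.0.25]{CF}, refining until the cover is affine and the underlying modules are free, I may assume $S=\Spec(R)$, $\cA_k=\cEnd_{\cO}(\cM_k)$ with $\cM_k$ free of finite positive rank, and $\si_k=\eta_{b_k}$ for a regular bilinear form $b_k$ that is symmetric in case \ref{lem_tensor_Symdecomp_i} and skew-symmetric in case \ref{lem_tensor_Symdecomp_iii}. All $\cO$--modules in play are quasi-coherent — $\cSymd_{\cA,\si}$ and $\cSym_{\cA_k,\si_k}$, $\cSkew_{\cA_k,\si_k}$ by \S\ref{quasi_coh}, and tensor products and sums of quasi-coherent submodules by reduction to the classical facts — so, by the $\mathrm{fppf}\sim\mathrm{small}$ correspondence of \S\ref{quasi_coh}, a quasi-coherent $\cO$--submodule of $\cA$ over $\Spec(R)$ is determined by its module of global sections inside $A=\cA(S)$. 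Hence it suffices to prove the identity of $R$--submodules of $A$
\[
\cSym_{\cA,\si}(S)=\cSymd_{\cA,\si}(S)+\Ima\bigl(\cSym_{\cA_1,\si_1}(S)\ot_R\cSym_{\cA_2,\si_2}(S)\to A\bigr)
\]
in case \ref{lem_tensor_Symdecomp_i}, and the analogous identity with $\cSkew$ in place of $\cSym$ on the right in case \ref{lem_tensor_Symdecomp_iii}.

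To finish, I would transport this through the isomorphisms $\vphi_{b_k}\co\cM_k\ot_R\cM_k\simlgr\cEnd_{\cO}(\cM_k)$ and $\vphi_{b_1\ot b_2}$, using \eqref{quamo2} and the canonical identification $(\cM_1\ot_R\cM_2)^{\ot 2}\cong\cM_1^{\ot 2}\ot_R\cM_2^{\ot 2}$, under which $\tau_1\ot\tau_2$ becomes the switch $\tau$ on $(\cM_1\ot\cM_2)^{\ot 2}$; in case \ref{lem_tensor_Symdecomp_iii}, $\si_k$ goes to the $(-1)$--switch $-\tau_k$, so $\cSkew_{\cA_k,\si_k}(S)=\Ker(\Id+\si_k)$ corresponds to $\Symm(\cM_k^{\ot 2},\tau_k)$ and the sign changes $(-1)(-1)$ cancel. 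In both cases the identity then becomes the single assertion
\[
\Symm\bigl(W^{\ot 2},\tau\bigr)=(\Id+\tau)\bigl(W^{\ot 2}\bigr)+\Ima\Bigl(\Symm\bigl(\cM_1^{\ot 2},\tau_1\bigr)\ot_R\Symm\bigl(\cM_2^{\ot 2},\tau_2\bigr)\to W^{\ot 2}\Bigr),\qquad W=\cM_1\ot_R\cM_2,
\]
which I would prove exactly as in Lemma \ref{locdirsum}: fixing bases $\{e_i\}$ of $\cM_1$ and $\{f_j\}$ of $\cM_2$, the module $\Symm(W^{\ot 2},\tau)$ is free on the elements $g\ot g$ and $g\ot g'+g'\ot g$ ($g\neq g'$), where $g,g'$ run over the basis $\{e_i\ot f_j\}$ of $W$; the elements of the second family lie in $(\Id+\tau)(W^{\ot 2})$, while $g\ot g=(e_i\ot f_j)\ot(e_i\ot f_j)$ corresponds under the identification to $(e_i\ot e_i)\ot(f_j\ot f_j)\in\Symm(\cM_1^{\ot 2},\tau_1)\ot_R\Symm(\cM_2^{\ot 2},\tau_2)$. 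This gives ``$\subseteq$'', completing the proof.

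The computational core is routine: once the reductions are in place it is the short argument already present in \cite{KMRT}. The real work is in the reductions — ensuring that the sum and tensor product of the quasi-coherent pieces stay quasi-coherent, so that the sheafification in the statement is harmless and the problem genuinely descends to an $R$--module identity; and checking that the weakly symplectic case, in which $\si_k$ corresponds to the $(-1)$--switch, yields the same switch $\tau$ on $W^{\ot 2}$ and hence the same module identity as the orthogonal case, which is precisely why one computation covers both \ref{lem_tensor_Symdecomp_i} and \ref{lem_tensor_Symdecomp_iii}.
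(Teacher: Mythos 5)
Your proposal is correct and follows essentially the same route as the paper: both inclusions are reduced by fppf localization and quasi-coherence to an identity of submodules of a tensor product of free modules over a ring, the involutions are transported through $\vphi_{b_k}$ to (signed) switches so that one basis computation covers both the orthogonal and weakly symplectic cases, and the identity is then verified on the standard basis of symmetric tensors exactly as in the paper. The only cosmetic difference is that you phrase the final computation on $W^{\ot 2}$ with $W=\cM_1\ot\cM_2$ rather than on $\cM_1^{\ot 2}\ot\cM_2^{\ot 2}$ directly, which is the same calculation under the canonical identification you name.
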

\begin{proof}
\noindent \ref{lem_tensor_Symdecomp_i}: If there exists an fppf cover over which the restrictions of each sheaf are equal, then the sheaves are equal globally. Since $\cA_1$ and $\cA_2$ are Azumaya $\cO$--algebras, there exists a sufficiently fine cover $\{T_i\to S\}_{i\in I}$ such that $(\cA_1,\si_1)|_{T_i} \cong (\cM_1\otimes_{\cO} \cM_1,\tau)$ and $(\cA_2,\si_2)|_{T_i} \cong (\cM_2\otimes_{\cO} \cM_2,\tau)$ for free $\cO|_{T_i}$--modules of finite rank $\cM_1$ and $\cM_2$ with the switch involution $\tau$ by \eqref{quamo1}. Now, focusing on one of the $T_i$, let $U\in \Sch_{T_i}$ be an affine scheme. Letting $A_i=\cA_i(U)$ with $\cA_0=\cA$, we know that
\[
\cSym_{\cA_i,\sigma_i}(U) = \{a\in A_i \mid \sigma_i(a)=a\} = \Symm(A_i,\sigma_i)
\]
and, arguing as in Remark \ref{rem_affine_classification}, we have that
\[
\cSymd_{\cA_i,\sigma_i}(U) = \{a+\sigma_i(a) \mid a\in A_i\} = \Symd(A_i,\sigma_i).
\]
Certainly
\begin{align*}
&\Symd(A,\sigma)+\big(\Symm(A_1,\sigma_1)\otimes_{\cO(U)}\Symm(A_2,\sigma_2)\big) \\
\subseteq &\big(\cSymd_{\cA,\si} + (\cSym_{\cA_1,\si_1}\otimes_{\cO} \cSym_{\cA_2,\si_2})\big)^\sharp(U) \\
\subseteq &\Symm(A,\sigma),
\end{align*}
so we are left to show that the reverse inclusion holds. Because we have localized sufficiently so that $(\cA_1,\si_1)|_{T_i} \cong (\cM_1\otimes_{\cO} \cM_1,\tau)$ and $(\cA_2,\si_2)|_{T_i} \cong (\cM_2\otimes_{\cO} \cM_2,\tau)$, it is sufficient to consider the case of $M_1$ and $M_2$ being free modules of finite rank over a ring $R$, and prove that
\begin{align*}
&\Symm\big((M_1\otimes_R M_1)\otimes_R (M_2\otimes_R M_2),\tau\otimes\tau\big)\\
=\; &\Symd\big((M_1\otimes_R M_1)\otimes_R (M_2\otimes_R M_2),\tau\otimes\tau\big)\\
&+ \Symm\big(M_1\otimes_R M_1,\tau)\otimes_R\Symm\big(M_2\otimes_R M_2,\tau)
\end{align*}
Clearly the second module is included in the first. Now, let $\{m_1,\ldots,m_a\}$ and $\{n_1,\ldots,n_b\}$ be free bases of $M_1$ and $M_2$ respectively. Then the module $\Symm\big((M_1\otimes_R M_1)\otimes_R (M_2\otimes_R M_2),\tau\otimes\tau\big)$ has a basis consisting of the elements
\begin{align*}
(m_i\otimes m_i)&\otimes (n_k\otimes n_k), \\
(m_i\otimes m_j)\otimes(n_k\otimes n_l) &+ (m_j\otimes m_i)\otimes(n_l\otimes n_k)
\end{align*}
for $i,j\in\{1,\ldots,a\}$, $k,l\in\{1,\ldots,b\}$ with either $i\neq j$ or $k\neq l$. The first type of basis element is contained in $\Symm\big(M_1\otimes_R M_1,\tau)\otimes_R\Symm\big(M_2\otimes_R M_2,\tau)$, and the second type is contained in $\Symd\big((M_1\otimes_R M_1)\otimes_R (M_2\otimes_R M_2),\tau\otimes\tau\big)$, so therefore the converse inclusion holds as well.

Hence, we have shown that
\[
\cSym_{\cA,\si}|_{T_i}(U) = \big(\cSymd_{\cA,\si} + (\cSym_{\cA_1,\si_1}\otimes_{\cO} \cSym_{\cA_2,\si_2})\big)^\sharp|_{T_i}(U)
\]
for all affine schemes $U\in \Sch_{T_i}$. By applying \cite[Tag 021V]{Stacks}, this means that these sheaves are equal over $T_i$ and thus we are done.

\noindent \ref{lem_tensor_Symdecomp_iii}: Since $\si_i$ are now weakly symplectic, i.e., of type $-1$, we will have that $(\cA_j,\si_j)|_{U_i} \cong (\cM_j\otimes_{\cO} \cM_j,-\tau)$ for $j=1,2$. Accounting for the facts that $(-\tau)\otimes(-\tau)=\tau\otimes\tau$,
\begin{align*}
\Skew(M_j\otimes_R M_j,-\tau) &= \Symm(M_j\otimes_R M_j,\tau) \text{, and}\\
\Alt(M_j\otimes_R M_j,-\tau) &= \Symd(M_j\otimes_R M_j,\tau),
\end{align*}
all other details are the same as in \ref{lem_tensor_Symdecomp_i}.
\end{proof}

\begin{remark}
With the same methods as above one can also generalize \cite[5.16]{KMRT}: If $\si_i$ are both orthogonal, then
\begin{enumerate}[label={\rm (\roman*)}]
\item \label{lem_tensor_Symdecomp_ii} $\begin{aligned}[t]
\cSymd_{\cA,\si}&\cap \big(\cSym_{\cA_1,\si_1}\otimes_{\cO}\cSym_{\cA_2,\si_2}\big)\\
=&\left(\big(\cSymd_{\cA_1,\si_1}\otimes_{\cO}\cSym_{\cA_2,\si_2}\big)+\big(\cSym_{\cA_1,\si_1}\otimes_{\cO}\cSymd_{\cA_2,\si_2}\big)\right)^\sharp.
\end{aligned}$
\end{enumerate}
and if $\si_i$ are both weakly symplectic, then
\begin{enumerate}[label={\rm (\roman*)}]
\setcounter{enumi}{1}
\item \label{lem_tensor_Symdecomp_iv} $\begin{aligned}[t]
\cSymd_{\cA,\si}&\cap \big(\cSkew_{\cA_1,\si_1}\otimes_{\cO}\cSkew_{\cA_2,\si_2}\big)\\
=&\left(\big(\cAlt_{\cA_1,\si_1}\otimes_{\cO}\cSkew_{\cA_2,\si_2}\big)+\big(\cSkew_{\cA_1,\si_1}\otimes_{\cO}\cAlt_{\cA_2,\si_2}\big)\right)^\sharp.
\end{aligned}$
\end{enumerate}
\lv{%
\begin{proof}
For the same reasons as above, we may also prove \ref{lem_tensor_Symdecomp_ii} in the context of free modules over a ring. We continue with the same notation as above. It is clear that
\begin{align*}
&\Symd(M_1\otimes_R M_1,\tau)\otimes\Symm(M_2\otimes_R M_2,\tau) \\
&+ \Symm(M_1\otimes_R M_1,\tau)\otimes\Symd(M_2\otimes_R M_2,\tau)\\
\subseteq & \Symd\big((M_1\otimes_R M_1)\otimes_R (M_2\otimes_R M_2), \tau\otimes\tau\big) \\
&\cap \Symm(M_1\otimes_R M_1,\tau)\otimes_R \Symm(M_2\otimes_R M_2,\tau).
\end{align*}
Next, $\Symm(M_1\otimes_R M_1,\tau)\otimes_R \Symm(M_2\otimes_R M_2,\tau)$ has a basis consisting of the elements
\begin{align*}
(m_i\otimes m_i)&\otimes (n_k\otimes n_k), \\
(m_i\otimes m_j + m_j\otimes m_i)&\otimes(n_k\otimes n_k),\\
(m_i\otimes m_i) &\otimes (n_k\otimes n_l + n_l\otimes n_k), \\
(m_i\otimes m_j + m_j\otimes m_i)&\otimes(n_k\otimes n_l + n_l\otimes n_k),
\end{align*}
for $i,j\in\{1,\ldots,a\}$, $k,l\in\{1,\ldots,b\}$ with $i\neq j$ and $k\neq l$. A linear combination of these elements is symmetrized exactly when the coefficients of the $(m_i\otimes m_i)\otimes (n_k\otimes n_k)$ are multiples of $2$, but then all summands belong to $\Symd(M_1\otimes_R M_1,\tau)\otimes\Symm(M_2\otimes_R M_2,\tau)$ or $\Symm(M_1\otimes_R M_1,\tau)\otimes\Symd(M_2\otimes_R M_2,\tau)$, which gives the reverse inclusion and proves \ref{lem_tensor_Symdecomp_ii}.

The proof of \ref{lem_tensor_Symdecomp_iv} is similar, just as the proofs of \ref{lem_tensor_Symdecomp_i} and \ref{lem_tensor_Symdecomp_iii} were similar above.
\end{proof}
}
We warn that the naive generalization of \cite[5.15]{KMRT}, which states that for central simple algebras $(A_1,\si_1)$ and $(A_2,\si_2)$ with involutions of the first kind over a field $\FF$ of characteristic $2$ we have
\begin{align*}
&\Symd(A_1,\si_1)\otimes_{\FF}\Symd(A_2,\si_2) \\
=& \big(\Symd(A_1,\si_1)\otimes_\FF \Symm(A_2,\si_2)\big)\cap \big(\Symm(A_1,\si_1)\otimes_{\FF}\Symd(A_2,\si_2)\big),
\end{align*}
does not hold in our generality. It fails when $2$ is neither invertible nor zero. For example, take $S=\Spec(\ZZ)$ and consider the Azumaya algebra $\cA=\Mat_2(\cO)$ with the split orthogonal involution $\eta_0$ in degree $2$ from Example \ref{split_example}\ref{split_example_b}. In particular,
\[
\eta_0\left(\begin{bmatrix} a & b \\ c & d \end{bmatrix}\right) = \begin{bmatrix} d & b \\ c & a \end{bmatrix}.
\]
Considering just the $\ZZ$--module with involution $(A,\eta_0)=(\Mat_2(\ZZ),\eta_0)$, we have
\[
\Symm(A,\eta_0) = \left\{ \begin{bmatrix} a & b \\ c & a \end{bmatrix} \right\} \text{ and } \Symd(A,\eta_0) = \left\{ \begin{bmatrix} a & 2b \\ 2c & a \end{bmatrix} \right\}.
\]
Therefore,
\[
\Symd(A,\eta_0)\otimes_{\ZZ} \Symd(A,\eta_0) = \left\{ \begin{bmatrix} a & 2b & 2c & 4d \\ 2e & a & 4f & 2c \\ 2g & 4h & a & 2b \\ 4i & 2g & 2e & a \end{bmatrix}\right\}
\]
while
\begin{align*}
&\big(\Symd(A,\eta_0)\otimes_\ZZ \Symm(A,\eta_0)\big)\cap(\Symm(A,\eta_0)\otimes_\ZZ \Symd(A,\eta_0)\big) \\
=& \left\{ \begin{bmatrix} a & 2b & 2c & 2d \\ 2e & a & 2f & 2c \\ 2g & 2h & a & 2b \\ 2i & 2g & 2e & a \end{bmatrix}\right\}
\end{align*}
and so these modules are not equal.
\end{remark}

Lemma \ref{lem_tensor_Symdecomp} will be used similarly to how \cite[5.16, 5.17]{KMRT} are used to prove uniqueness claims about quadratic pairs on tensor products. If the algebra with involution $(\cA,\si)$ extends to a quadratic triple with some linear form $f$, then the behaviour of $f$ is prescribed on $\cSymd_{\cA,\si}$. Therefore, by Lemma \ref{lem_tensor_Symdecomp} such $f$ will be uniquely determined by its behaviour on $\cSym_{\cA_1,\si_1}\otimes_{\cO}\cSym_{\cA_2,\si_2}$. We use this in the next two propositions, generalizing \cite[5.18, 5.20]{KMRT}.

\begin{prop}\label{tens_triple_invol}
Let $(\cA_1,\si_1,f_1)$ be a quadratic triple and let $(\cA_2,\si_2)$ be an Azumaya $\cO$--algebra with orthogonal involution. Then there exists a unique quadratic triple $(\cA,\si, f_{1*})$ such that, for sections $s_i\in \cSym_{\cA_i,\si_i}$ we have
\begin{equation} \label{ten_triple_invol_eq}
f_{1*}(s_1\otimes s_2) = f_1(s_1)\Trd_{\cA_2}(s_2).
\end{equation}
\end{prop}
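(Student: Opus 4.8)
The plan is to construct $f_{1*}$ by gluing the evident local data and to deduce uniqueness from the decomposition of $\cSym_{\cA,\si}$ provided by Lemma \ref{lem_tensor_Symdecomp}\ref{lem_tensor_Symdecomp_i}. First one records that $\si=\si_1\otimes\si_2$ is orthogonal: since $(\cA_1,\si_1,f_1)$ is a quadratic triple, $\si_1$ is orthogonal, so $\veps_1=1$, and together with $\veps_2=1$ and \cite[8.1.3(1)]{K} (a local statement, applied after an affine cover) one gets that $\si$ has type $\veps_1\veps_2=1$. Hence $(\cA,\si)$ is an Azumaya $\cO$--algebra with orthogonal involution and it is meaningful to speak of quadratic triples $(\cA,\si,f)$.

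For existence, I would choose an affine cover $\{U_i\to S\}_{i\in I}$ and invoke Proposition \ref{lem_pair_affine}\ref{lem_pair_affine_i} to obtain $\ell_i\in\cA_1(U_i)$ with $\ell_i+\si_1(\ell_i)=1$ and $f_1|_{U_i}=\Trd_{\cA_1}(\ell_i\und)$ on $\cSym_{\cA_1,\si_1}|_{U_i}$. Put $\widetilde\ell_i:=\ell_i\otimes 1\in\cA(U_i)$; then $\widetilde\ell_i+\si(\widetilde\ell_i)=(\ell_i+\si_1(\ell_i))\otimes 1=1_{\cA}$. On an overlap, for every $V\in\Sch_{U_{ij}}$ and every section $m\in\cSym_{\cA_1,\si_1}(V)$ one has $\Trd_{\cA_1}\bigl((\ell_i-\ell_j)|_V\cdot m\bigr)=f_1|_V(m)-f_1|_V(m)=0$, so $\ell_i|_{U_{ij}}-\ell_j|_{U_{ij}}\in\cSym_{\cA_1,\si_1}^{\perp}(U_{ij})=\cAlt_{\cA_1,\si_1}(U_{ij})$ by Lemma \ref{lem_perp}\ref{lem_perp_iii}. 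Since the $\cO$--algebra morphism $\cA_1\to\cA$, $b\mapsto b\otimes 1$, intertwines $\si_1$ and $\si$, it carries $\cAlt_{\cA_1,\si_1}$ into $\cAlt_{\cA,\si}$, whence $\widetilde\ell_i|_{U_{ij}}-\widetilde\ell_j|_{U_{ij}}\in\cAlt_{\cA,\si}(U_{ij})$. Example \ref{ex_cover_construction}\ref{quafoex_c} then yields a unique quadratic triple $(\cA,\si,f_{1*})$ with $f_{1*}|_{U_i}=\Trd_{\cA}(\widetilde\ell_i\und)$ for all $i$. For sections $s_j\in\cSym_{\cA_j,\si_j}$ one computes on $U_i$ that $f_{1*}(s_1\otimes s_2)=\Trd_{\cA}\bigl((\ell_i s_1)\otimes s_2\bigr)=\Trd_{\cA_1}(\ell_i s_1)\,\Trd_{\cA_2}(s_2)=f_1(s_1)\,\Trd_{\cA_2}(s_2)$, using the identity $\Trd_{\cA}(a_1\otimes a_2)=\Trd_{\cA_1}(a_1)\Trd_{\cA_2}(a_2)$, which one checks \'etale-locally where it becomes the trace of a Kronecker product. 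As this is an equality of global maps, \eqref{ten_triple_invol_eq} follows.

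For uniqueness, let $(\cA,\si,f')$ be any quadratic triple satisfying \eqref{ten_triple_invol_eq}. By Lemma \ref{lem_tensor_Symdecomp}\ref{lem_tensor_Symdecomp_i}, $\cSym_{\cA,\si}$ is the fppf sheafification of the sub-presheaf $\cSymd_{\cA,\si}+\cSym_{\cA_1,\si_1}\otimes_{\cO}\cSym_{\cA_2,\si_2}$ of $\cA$, so an $\cO$--linear form on $\cSym_{\cA,\si}$ is determined by its restriction to this sub-presheaf. On $\cSymd_{\cA,\si}$ both $f'$ and $f_{1*}$ agree, since locally any section is of the form $a+\si(a)$ and both semi-traces send it to $\Trd_{\cA}(a)$; on $\cSym_{\cA_1,\si_1}\otimes_{\cO}\cSym_{\cA_2,\si_2}$ they agree by $\cO$--linearity because they agree on elementary tensors $s_1\otimes s_2$. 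Hence $f'=f_{1*}$.

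I expect the main obstacle to be purely organisational: verifying the cocycle-type compatibility needed to apply Example \ref{ex_cover_construction}\ref{quafoex_c}, and, in the uniqueness argument, the care required to see that a linear form is pinned down by its values on a generating sub-presheaf of the sheafification $\cSym_{\cA,\si}$. The reduced-trace multiplicativity is routine once one reduces to split algebras, and everything else is local and already available in the excerpt.
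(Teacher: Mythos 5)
Your proof is correct and follows essentially the same route as the paper: an affine cover, local elements $\ell_i\otimes 1$, the gluing construction of Example \ref{ex_cover_construction}\ref{quafoex_c}, and a local trace computation for \eqref{ten_triple_invol_eq}. The only cosmetic difference is that you spell out the uniqueness step via Lemma \ref{lem_tensor_Symdecomp}\ref{lem_tensor_Symdecomp_i} inside the proof, whereas the paper handles that argument in the discussion immediately preceding the proposition; your rederivation of $\ell_i|_{U_{ij}}-\ell_j|_{U_{ij}}\in\cAlt_{\cA_1,\si_1}(U_{ij})$ from Lemma \ref{lem_perp}\ref{lem_perp_iii} is also fine.
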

\begin{proof}
Consider an affine cover $\{U_i\to S\}_{i\in I}$ of $S$. We know by Proposition \ref{lem_pair_affine}\ref{lem_pair_affine_i} that each $f|_{U_i}$ is described by an $\ell_i \in \cA_1(U_i)$ with $\ell_i + \si_1(\ell_i)=1$. Then
\[
(\ell_i\otimes 1)+(\si_1\otimes\si_2)(\ell_i\otimes 1) = (\ell_i+\si_1(\ell_i))\otimes 1 = 1\otimes 1 = 1
\]
and since $\ell_i|_{U_{ij}}-\ell_j|_{U_{ij}} \in \cAlt_{\cA_1,\si_1}(U_{ij})$, we also have that
\[
(\ell_i\otimes 1)|_{U_{ij}}-(\ell_j\otimes 1)|_{U_{ij}} = \big(\ell_i|_{U_{ij}}-\ell_j|_{U_{ij}}\big)\otimes 1 \in \cAlt_{\cA_1\otimes_{\cO}\cA_2,\si_1\otimes \si_2}(U_{ij}).
\]
Therefore, we may use the construction of Example \ref{ex_cover_construction}\ref{quafoex_c} with the elements $\ell_i\otimes 1 \in \cA(U_i)$ to define a global $f_{1*}$. It remains to show that this $f_{1*}$ behaves as claimed in \eqref{ten_triple_invol_eq} on sections $s_i\in \cSym_{\cA_i,\si_i}(T)$ for some $T\in \Sch_S$. Consider such sections $s_1$ and $s_2$. With respect to the cover $\{T_i:=U_i \times_S T \to T\}_{i\in I}$, our new linear form $f_{1*}$ will be locally described by the elements $(\ell_i\otimes 1)|_{T_i}$. So, locally we have
\begin{align*}
f_{1*,i}((s_1\otimes s_2)|_{T_i}) &= \Trd_{\cA_1\otimes_{\cO} \cA_2}\big((\ell_i\otimes 1)|_{T_i}(s_1\otimes s_2)|_{T_i} \big)\\
&= \Trd_{\cA_1\otimes_{\cO} \cA_2}\big((\ell_i|_{T_i} \cdot s_1|_{T_i})\otimes s_2|_{T_i} \big)\\
&= \Trd_{\cA_1}\big(\ell_i|_{T_i} \cdot s_1|_{T_i}\big)\Trd_{\cA_2}\big(s_2|_{T_i}\big)\\
&= f_1|_{T_i}(s_1|_{T_i})\Trd_{\cA_2}\big(s_2|_{T_i}\big)
\end{align*}
and hence gluing yields $f_{1*}(s_1\otimes s_2) = f_1(s_1)\Trd_{\cA_2}(s_2)$ as desired.
\end{proof}

\begin{example}
Here we generalize \cite[Example 5.19]{KMRT}. On one hand, let $(\calM_1,q_1)$ be a regular quadratic form and let $(\End_{\cO}(\calM_1), \si_1, f_{q_1})$ be the
quadratic pair defined in Proposition \ref{qpex}\ref{qpex_a}. On the other hand, let $(\calM_2,b_2)$ be a regular symmetric bilinear form on
a locally free $\calO$--module of finite positive rank $\calM_2$. We consider the tensor product quadratic form $q$ on $\calM=\calM_1 \otimes_{\calO} \calM_2$
as defined by Sah \cite[Thm. 1]{Sah} in the ring case. This $q$ is regular, and so we consider the attached quadratic pair $(\End_{\cO}(\calM, \si, f_q)$.
The proof of \cite[Example 5.19]{KMRT} shows that there is an isomorphism
\[
(\End_{\cO}(\calM_1), \si_1, f_{q_1}) \otimes  (\End_{\cO}(\calM_2), \si_{b_2}) \simlgr    (\End_{\cO}(\calM), \si, f_q)
\]
where the tensor product stands for the construction of Proposition \ref{tens_triple_invol}.
\end{example}

Turning our attention to the weakly symplectic case, it turns out that two weakly symplectic involutions may not have a quadratic triple structure on their tensor product, see Example \ref{ex_weakly_tens_not_quad} below. To obtain a result analogous to \cite[5.20]{KMRT} we will assume we have two symplectic involutions, i.e., where the types are $\varepsilon_i=-1$ and the local bilinear forms are alternating. This is due in part to the following lemma.

\begin{lem}\label{lem_symp_1_symd}
Let $(\cA,\si)$ be an Azumaya $\cO$--algebra with a weakly symplectic involution. Then, the following are equivalent.
\begin{enumerate}[label={\rm (\roman*)}]
\item \label{lem_symp_1_symd_i} $\si$ is symplectic.
\item \label{lem_symp_1_symd_ii} $\Trd_{\cA}(s) = 0$ for all sections $s\in \cSkew_{\cA,\si}$.
\item \label{lem_symp_1_symd_iii} $1_{\cA} \in \cSymd_{\cA,\si}(S)$.
\end{enumerate}
\end{lem}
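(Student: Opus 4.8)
The plan is to prove $\ref{lem_symp_1_symd_ii} \Leftrightarrow \ref{lem_symp_1_symd_iii}$ formally from the orthogonality relations of Lemma \ref{lem_perp}, and $\ref{lem_symp_1_symd_i} \Leftrightarrow \ref{lem_symp_1_symd_ii}$ by reduction to a neutral algebra. For the first equivalence, recall from the definition of the orthogonal complement $\cSkew_{\cA,\si}^\perp$ (preceding Lemma \ref{lem_orth_comp}) that $1_{\cA} \in \cSkew_{\cA,\si}^\perp(S)$ means $\Trd_{\cA}(1_{\cA}|_V \cdot m) = \Trd_{\cA}(m) = 0$ for all $V \in \Sch_S$ and all $m \in \cSkew_{\cA,\si}(V)$, which is exactly \ref{lem_symp_1_symd_ii}. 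Since $\si$ is weakly symplectic, Lemma \ref{lem_perp}\ref{lem_perp_iv} gives $\cSkew_{\cA,\si}^\perp = \cSymd_{\cA,\si}$, so $1_{\cA} \in \cSkew_{\cA,\si}^\perp(S)$ is the same as $1_{\cA} \in \cSymd_{\cA,\si}(S)$, i.e.\ \ref{lem_symp_1_symd_iii}. Hence \ref{lem_symp_1_symd_ii} $\Leftrightarrow$ \ref{lem_symp_1_symd_iii}.

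For \ref{lem_symp_1_symd_i} $\Leftrightarrow$ \ref{lem_symp_1_symd_ii} I would first set up the local model. Being symplectic is fppf-local on $S$ (Section \ref{orin}), and the vanishing of $\Trd_{\cA}$ on $\cSkew_{\cA,\si}$ both passes to restrictions and can be checked on an fppf cover since $\cO$ is a sheaf; so it suffices to work over a cover $\{T_i \to S\}$ with $(\cA,\si)|_{T_i} \cong (\cEnd_{\cO}(\cM_i), \eta_{b_i})$, $\cM_i$ free of finite rank and $b_i$ regular skew-symmetric (by \cite[2.7.0.25]{CF}, refining so that $\cM_i$ is free). Using $\vphi_{b_i}\co \cM_i \ot_{\cO} \cM_i \simlgr \cEnd_{\cO}(\cM_i)$ and the $\veps$-switch identity \eqref{quamo2} with $\veps = -1$, the isomorphism $\vphi_{b_i}$ identifies the $\tau$-symmetric tensors with $\cSkew_{\cA,\si}|_{T_i}$, and over $T_i$ these tensors are spanned by the $m \ot m$ by \eqref{qpex-3a}; moreover, since $\vphi_{b_i}(m_1 \ot m_2)$ is the rank-one endomorphism $m \mapsto m_1 b_i(m_2, m)$, one computes $\Trd_{\cA}(\vphi_{b_i}(m_1 \ot m_2)) = b_i(m_2, m_1)$, in particular $\Trd_{\cA}(\vphi_{b_i}(m \ot m)) = b_i(m, m)$. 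Now if $\si$ is symplectic, each $b_i$ is alternating, so $\Trd_{\cA}$ annihilates every generator $\vphi_{b_i}(m \ot m)$ of $\cSkew_{\cA,\si}$ over $T_i$, hence vanishes on $\cSkew_{\cA,\si}$ globally and over every $T \in \Sch_S$ by the sheaf property of $\cO$; this gives \ref{lem_symp_1_symd_i} $\Rightarrow$ \ref{lem_symp_1_symd_ii}.

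Conversely, assume \ref{lem_symp_1_symd_ii}. To show $\si$ is symplectic it suffices, by locality, to show each $b_i$ is alternating. For $V \in \Sch_{T_i}$ and $x \in \cM_i(V)$ the element $\vphi_{b_i}(x \ot x)$ is a section of $\cSkew_{\cA,\si}$ over $V$ (again by \eqref{quamo2}), so $b_i(x, x) = \Trd_{\cA}(\vphi_{b_i}(x \ot x)) = 0$ by \ref{lem_symp_1_symd_ii}; thus $b_i$ is alternating and $\si|_{T_i}$, hence $\si$, is symplectic. The only step requiring genuine care is the bookkeeping of restrictions — that \ref{lem_symp_1_symd_ii} is inherited by the members of a cover and that ``symplectic'' is a local notion — but both are already part of the framework of Section \ref{orin}; the single computational input $\Trd_{\cA}(\vphi_{b_i}(m_1 \ot m_2)) = b_i(m_2, m_1)$ is immediate from the rank-one description of $\vphi_{b_i}(m_1 \ot m_2)$.
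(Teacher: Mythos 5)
Your proposal is correct and follows essentially the same route as the paper: the equivalence \ref{lem_symp_1_symd_ii}$\Leftrightarrow$\ref{lem_symp_1_symd_iii} is read off from Lemma \ref{lem_perp}\ref{lem_perp_iv}, and \ref{lem_symp_1_symd_i}$\Leftrightarrow$\ref{lem_symp_1_symd_ii} is obtained by passing to a local model $(\cEnd_{\cO}(\cM_i),\eta_{b_i})$ and computing $\Trd$ on the generators $\vphi_{b_i}(m\ot m)$ of $\cSkew$. The only cosmetic difference is that you invoke \eqref{qpex-3a} to span the skew sections by the $\vphi_{b_i}(m\ot m)$ alone, while the paper lists the generators $\vphi_b(m\ot m)$ and $\vphi_b(m\ot n)-\eta_b(\vphi_b(m\ot n))$ separately; your sign in the trace formula $\Trd_{\cA}(\vphi_{b_i}(m_1\ot m_2))=b_i(m_2,m_1)$ is in fact the more careful one.
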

\begin{proof}
Since $(\cA,\si)$ is weakly symplectic, there exists an fppf cover $\{T_i \to S\}_{i\in I}$ over which
\[
(\cM_i\otimes_{\cO|_{T_i}}\cM_i,-\tau) \overset{\varphi_b}{\longrightarrow} (\cEnd_{\cO|_{T_i}}(\cM_i),\eta_b) \cong (\cA,\si)|_{T_i}
\]
where $\tau$ is the switch involution and $\varphi_b$ is the isomorphism \eqref{quamo1} for a regular skew-symmetric bilinear form $b$.

\noindent \ref{lem_symp_1_symd_i}$\Leftrightarrow$\ref{lem_symp_1_symd_ii}: For any $T\in \Sch_S$ and a section $s\in \cSkew_{\cA,\si}(T)$, we may compute locally with respect to the cover $\{T_i\times_S T \to T\}_{i\in I}$ where the section will be a linear combination of elements of the form
\begin{align*}
\varphi_b(m&\otimes m) \\
\varphi_b(m\otimes n) &- \eta_b(\varphi_b(m\otimes n))
\end{align*}
for $m,n\in \cM_i(T_i\times_S T)$. Since we have
\[
\Trd_{\cEnd_{\cO|_{T_i}}(\cM_i)}(\varphi_b(m\otimes n)) = b(m,n),
\]
the trace of the second type of element is clearly $0$, while the trace vanishes for all elements of the first type if and only if $b$ is alternating. Hence, $\Trd_{\cA}(s)=0$ for all sections $s\in \cSkew_{\cA,\si}$ if and only if the underlying bilinear forms are alternating, i.e., $\si$ is symplectic.

\noindent \ref{lem_symp_1_symd_ii}$\Leftrightarrow$\ref{lem_symp_1_symd_iii}: By Lemma \ref{lem_perp}\ref{lem_perp_iv} we know $\cSkew_{\cA,\si}^\perp = \cSymd_{\cA,\si}$ and $\cSymd_{\cA,\si}^\perp = \cSkew_{\cA,\si}$, from which the equivalence is immediate.
\end{proof}

Similar to the orthogonal case, having $1_{\cA}\in \cSymd_{\cA,\si}$ will grant us local $\ell_i$ with $\ell_i+\si(\ell_i)=1$ from which we can build a quadratic triple. The following result generalizes \cite[5.20]{KMRT}.

\begin{prop}\label{tens_symplectic}
Let $(\cA_1,\si_1)$ and $(\cA_2,\si_2)$ be two Azumaya $\cO$--algebras with symplectic involutions. Then there exists a unique quadratic triple $(\cA_1\otimes_{\cO} \cA_2,\si_1\otimes\si_2,f_\otimes)$ such that, for sections $s_i\in \cSkew_{\cA_i,\si_i}$ we have
\[
f_\otimes(s_1\otimes s_2)=0.
\]
\end{prop}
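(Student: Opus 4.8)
The plan is to imitate the proof of Proposition~\ref{tens_triple_invol}, using in place of a quadratic pair on $\cA_1$ the weaker input that $\si_1$ is symplectic. Two preliminary remarks: first, $\si_1$ and $\si_2$ both have type $-1$, so $\si = \si_1 \otimes \si_2$ has type $(-1)(-1) = 1$ and is orthogonal, which makes the assertion meaningful; second, by Lemma~\ref{lem_symp_1_symd} the symplecticity of $\si_i$ gives $1_{\cA_i} \in \cSymd_{\cA_i,\si_i}(S)$ and $\Trd_{\cA_i}(s) = 0$ for every section $s \in \cSkew_{\cA_i,\si_i}$. The first fact will furnish local ``$\ell$'s'' and the second is precisely what forces $f_\otimes(s_1\otimes s_2) = 0$.

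To build $f_\otimes$ I would glue. Choose an affine open cover $\{U_i \to S\}_{i\in I}$ of $S$. Over each $U_i$, the exact sequence $0 \to \cSkew_{\cA_1,\si_1} \to \cA_1 \xrightarrow{\Id+\si_1} \cSymd_{\cA_1,\si_1} \to 0$ together with $H^1\fppf(U_i,\cSkew_{\cA_1,\si_1}) = 0$ (Lemma~\ref{lem_qc_affine_cohom}, as $\cSkew_{\cA_1,\si_1}$ is quasi-coherent) and $1_{\cA_1}\in\cSymd_{\cA_1,\si_1}(U_i)$ produces $\ell_1^{(i)}\in\cA_1(U_i)$ with $\ell_1^{(i)} + \si_1(\ell_1^{(i)}) = 1_{\cA_1}$. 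Set $\ell^{(i)} = \ell_1^{(i)} \otimes 1 \in \cA(U_i)$, so $\ell^{(i)} + \si(\ell^{(i)}) = 1_{\cA}$. On an overlap, $\ell^{(i)}|_{U_{ij}} - \ell^{(j)}|_{U_{ij}} = (\ell_1^{(i)} - \ell_1^{(j)}) \otimes 1$ with $\ell_1^{(i)} - \ell_1^{(j)} \in \cSkew_{\cA_1,\si_1}(U_{ij})$; writing $1_{\cA_2} = c + \si_2(c)$ locally (possible since $1_{\cA_2}\in\cSymd_{\cA_2,\si_2}$) one has $(\ell_1^{(i)} - \ell_1^{(j)}) \otimes 1 = a - \si(a)$ for $a = (\ell_1^{(i)} - \ell_1^{(j)}) \otimes c$, so this difference lies in $\cAlt_{\cA,\si}(U_{ij})$ (this being checkable on a cover). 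Hence Example~\ref{ex_cover_construction}\ref{quafoex_c} applies to the data $\{U_i\to S\}$, $\{\ell^{(i)}\}$ and yields a quadratic triple $(\cA,\si,f_\otimes)$ with $f_\otimes|_{U_i} = \Trd_{\cA}(\ell^{(i)}\,\und)|_{\cSym_{\cA,\si}}$. The identity $f_\otimes(s_1\otimes s_2) = 0$ for $s_j \in \cSkew_{\cA_j,\si_j}$ is then local on the $U_i$ (pull back along $U_i\times_S T \to T$): $\Trd_{\cA}\big((\ell_1^{(i)}\otimes 1)(s_1\otimes s_2)\big) = \Trd_{\cA_1}(\ell_1^{(i)} s_1)\Trd_{\cA_2}(s_2) = 0$, since $\Trd_{\cA_2}(s_2) = 0$ by Lemma~\ref{lem_symp_1_symd}\ref{lem_symp_1_symd_ii}.

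For uniqueness I would invoke Lemma~\ref{lem_tensor_Symdecomp}\ref{lem_tensor_Symdecomp_iii}: if $(\cA,\si,f')$ is another such triple then $g = f_\otimes - f'$ is a morphism $\cSym_{\cA,\si} \to \cO$ vanishing on $\cSymd_{\cA,\si}$ (both $f_\otimes$ and $f'$ satisfy $f(x+\si(x)) = \Trd_{\cA}(x)$) and, by hypothesis and linearity, on $\cSkew_{\cA_1,\si_1} \otimes_{\cO} \cSkew_{\cA_2,\si_2}$; since $\cSym_{\cA,\si}$ is the fppf sheafification of $\cSymd_{\cA,\si} + \cSkew_{\cA_1,\si_1}\otimes_{\cO}\cSkew_{\cA_2,\si_2}$ and $\cO$ is separated, $g = 0$. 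The step I expect to be the crux is verifying $\ell^{(i)}|_{U_{ij}} - \ell^{(j)}|_{U_{ij}} \in \cAlt_{\cA,\si}(U_{ij})$ and not merely in $\cSkew_{\cA,\si}(U_{ij})$: this is exactly where symplecticity of $\si_2$ (through $1_{\cA_2}\in\cSymd_{\cA_2,\si_2}$) enters, and is the analogue of the step that breaks down for involutions that are only weakly symplectic.
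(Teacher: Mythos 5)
Your proof is correct, and the overall skeleton (local elements $\ell_1^{(i)}\otimes 1$ glued via Example~\ref{ex_cover_construction}\ref{quafoex_c}, uniqueness via Lemma~\ref{lem_tensor_Symdecomp}\ref{lem_tensor_Symdecomp_iii}) matches the paper. But you handle the crux --- showing $(\ell_1^{(i)}-\ell_1^{(j)})\otimes 1 \in \cAlt_{\cA,\si}(U_{ij})$ --- by a genuinely different and more direct argument. The paper proves this indirectly: it invokes $\cAlt_{\cA,\si}=\cSym_{\cA,\si}^\perp$ (Lemma~\ref{lem_perp}\ref{lem_perp_iii}) and the decomposition of $\cSym_{\cA,\si}$ from Lemma~\ref{lem_tensor_Symdecomp}\ref{lem_tensor_Symdecomp_iii}, then checks trace-orthogonality of the difference against the two types of generators, the second check using $\Trd_{\cA_2}(s_2)=0$ for skew $s_2$, i.e.\ condition \ref{lem_symp_1_symd_ii} of Lemma~\ref{lem_symp_1_symd}. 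You instead exhibit the difference explicitly as an alternating element: with $s=\ell_1^{(i)}-\ell_1^{(j)}\in\cSkew_{\cA_1,\si_1}$ and a local $c$ with $c+\si_2(c)=1_{\cA_2}$, the identity $a-\si(a)=s\otimes(c+\si_2(c))=s\otimes 1$ for $a=s\otimes c$ does the job, and membership in the image sheaf $\cAlt_{\cA,\si}$ is indeed local, so passing to a cover where $c$ exists is legitimate. Your route uses the symplecticity of $\si_2$ through condition \ref{lem_symp_1_symd_iii} ($1_{\cA_2}\in\cSymd_{\cA_2,\si_2}$) rather than \ref{lem_symp_1_symd_ii}, avoids the decomposition lemma and the regularity of the trace form entirely in the existence step (you still need \ref{lem_symp_1_symd_ii} for the final vanishing $f_\otimes(s_1\otimes s_2)=0$, and the decomposition lemma for uniqueness), and makes transparent exactly where the argument fails for merely weakly symplectic $\si_2$: there $1_{\cA_2}\notin\cSymd_{\cA_2,\si_2}$ and no such $c$ exists, which is consistent with the paper's remark following the proposition.
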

\begin{proof}
We first establish uniqueness. Let $f,f' \colon \cSym_{\cA,\si} \to \cO$ be two solutions. Then $f-f'$ vanishes on $\cSymd_{\cA,\si}$
and on $\cSkew_{\cA_1,\si_1} \otimes \cSkew_{\cA_2,\si_2}$. According to Lemma \ref{lem_tensor_Symdecomp}\ref{lem_tensor_Symdecomp_iii},
$\cSymd_{\cA,\si}$ and $\cSkew_{\cA_1,\si_1} \otimes \cSkew_{\cA_1,\si_1}$ generate $\cSymd_{\cA,\si}$ so that $f'=f$.

We now establish existence. Since $(\cA_1,\si_1)$ is symplectic we have $1_{\cA_1}\in \cSymd_{\cA_1,\si_1}$ by Lemma \ref{lem_symp_1_symd}, and so there is an fppf cover $\{T_i\to S\}_{i\in I}$ such that for each $i\in I$ there exists $\ell_i\in \cA_1(T_i)$ with $\ell_i+\si_1(\ell_i)=1$. Then we will have
\[
(\ell_i\otimes 1)+(\si_1\otimes\si_2)(\ell_i\otimes 1)= 1,
\]
so to use the construction of Examples \ref{ex_cover_construction}\ref{quafoex_c} we need to check that \break $(\ell_i\otimes 1)|_{T_{ij}} - (\ell_j\otimes 1)|_{T_{ij}} \in \cAlt_{\cA,\si}(T_{ij})$. By Lemma \ref{lem_perp}\ref{lem_perp_iii} and Lemma \ref{lem_tensor_Symdecomp}\ref{lem_tensor_Symdecomp_iii}, it is sufficient to show that these elements are orthogonal to sections of the form $a+\si(a)$ for any $a\in \cA|_{T_{ij}}$, and $s_1\otimes s_2$ for sections $s_i\in \cSkew(\cA_i,\si_i)|_{T_{ij}}$. For the first type, we have
\[
\Trd_{\cA}((\ell_i\otimes 1)|_{T_{ij}}(a+\si(a))) = \Trd_{\cA_1\otimes_{\cO}\cA_2}(a) = \Trd_{\cA}((\ell_j\otimes 1)|_{T_{ij}}(a+\si(a)))
\]
and so
\[
\Trd_{\cA}((\ell_i\otimes 1|_{T_{ij}}-\ell_j\otimes 1|_{T_{ij}})(a+\si(a)))=0.
\]
For the second type, we have
\begin{align*}
\Trd_{\cA}\big((\ell_i\otimes 1)|_{T_{ij}}(s_1\otimes s_2) \big)=\Trd_{\cA_1}(\ell_i|_{T_{ij}} s_1)\Trd_{\cA_2}(s_2)=0
\end{align*}
since $\Trd_{\cA_2}(s_2)=0$ by Lemma \ref{lem_symp_1_symd}\ref{lem_symp_1_symd_ii}. Similarly $\Trd_{\cA}\big((\ell_j\otimes 1)|_{T_{ij}}(s_1\otimes s_2) \big)=0$ and so $(\ell_i\otimes 1)|_{T_{ij}}-(\ell_j\otimes 1)|_{T_{ij}}$ is orthogonal to $s_1\otimes s_2$ as well. Thus there exists a global $f_\otimes \colon \cSym_{\cA,\si}\to\cO$ constructed as in Example \ref{ex_cover_construction}\ref{quafoex_c}. The above local calculations also imply that $f_{\otimes}(s_1\otimes s_2)=0$ for sections $s_1\in \cSkew_{\cA_1,\si_1}$ and $s_2\in \cSkew_{\cA_2,\si_2}$, and so we have constructed the unique quadratic triple we desired.
\end{proof}

Note that both $(\cA_1,\si_1)$ and $(\cA_2,\si_2)$ were required to be symplectic in the above proposition. We needed $(\cA_1,\si_1)$ to be symplectic to have $1_{\cA_1}\in \cSymd_{\cA_1,\si_1}$, and $(\cA_2,\si_2)$ to be symplectic to have $\Trd_{\cA_2}(s_2)=0$. If $(\cA_2,\si_2)$ were only weakly symplectic, then we could construct the local maps $f_{\otimes,i}$ just the same, but it is not clear if the elements $(\ell_i\otimes 1)|_{T_{ij}}-(\ell_j\otimes 1)|_{T_{ij}}$ belong to $\cAlt_{\cA,\si}(T_{ij})$, and so there may be examples with no quadratic triple on the tensor product.

If both involutions are only weakly symplectic there is almost no hope, even over affine schemes as in the following example.
\begin{example}\label{ex_weakly_tens_not_quad}
Let $S=\Spec(\ZZ/2\ZZ)$. Since we are over an affine scheme we may simply work with Azumaya algebras over $R=\ZZ/2\ZZ$. Consider the Azumaya algebra $\Mat_2(R)$ with involution $\si(B) = u^{-1} B^T u$ for
\[
u = \begin{bmatrix} 1 & 1 \\ 1 & 0 \end{bmatrix}.
\]
This is weakly symplectic since $u^T=-u$, but not symplectic (for example by Lemma \ref{lem_symp_1_symd}).
\lv{It is weakly symplectic since $u^T = -u$, but it is not symplectic since
\[
b((1,0),(1,0))= \begin{bmatrix} 1 & 0\end{bmatrix}\begin{bmatrix} 1 & 1 \\ 1 & 0 \end{bmatrix}\begin{bmatrix} 1 \\ 0\end{bmatrix} = 1 \neq 0.
\]}%
Note that
\[
s = \begin{bmatrix} 1 & 1 \\ 0 & 0 \end{bmatrix} \in \Skew(\Mat_2(R),\si)
\]
is a skew-symmetric element. If there were a quadratic triple $(\Mat_2(R)\otimes_R \Mat_2(R),\si\otimes\si,f)$, then necessarily
\[
2f(s\otimes s) = \Trd_{\Mat_2(R)\otimes_R \Mat_2(R)}(s\otimes s) = \Trd_{\Mat_2(R)}(s)^2 = 1^2 = 1,
\]
however there is no $x\in \ZZ/2\ZZ$ such that $2x=1$. Therefore no such $f$ exists. Since we are over an affine scheme this means that $\si\otimes\si$ fails to even be locally quadratic.
\end{example}

\section{Obstructions to Quadratic Pairs}\label{obs}
In this section we introduce cohomological obstructions which prevent an Azumaya $\cO$--algebra with locally quadratic involution $(\cA,\sigma)$ from being extended to a quadratic triple, either outright or where the triple has certain properties. We begin by considering portions of various long exact cohomology sequences arising from \eqref{eq_big_diagram}. In particular we have the following diagram
\begin{equation}\label{eq_long_exacts}
\begin{tikzcd}[column sep=0.3in]
\cA(S) \arrow{r}{\Id+\sigma} \arrow{d} & \cSymd_{\cA,\sigma}(S) \arrow{r}{\delta} \arrow[equals]{d} & \check{H}^1(S,\cSkew_{\cA,\sigma}) \arrow{d} \\
(\cA/\cAlt_{\cA,\sigma})(S) \arrow{r}{\xi(S)} \arrow{d}{c} & \cSymd_{\cA,\sigma}(S) \arrow{r}{\delta'} & \check{H}^1(S,\cSkew_{\cA,\sigma}/\cAlt_{\cA,\sigma}) \\
\check{H}^1(S,\cAlt_{\cA,\sigma}) & &
\end{tikzcd}
\end{equation}
with exact left column and exact rows.

\begin{defn}[Strong and Weak Obstructions]\label{defn_obs}
Let $(\cA,\sigma)$ be an Azumaya $\cO$--algebra with locally quadratic involution. Then $1_{\cA}\in \cSymd_{\cA,\sigma}(S)$ and we call $\Strong(\cA,\si)=\delta(1_{\cA})$ the \emph{strong obstruction} and call $\Weak(\cA,\si)=\delta'(1_{\cA})$ the \emph{weak obstruction}.
\end{defn}

The strong and weak obstructions prevent the existence of a quadratic triple involving $\si$ in the following way.

\begin{thm}\label{prop_obstructions}
Let $(\cA,\si)$ be an Azumaya $\cO$--algebra with a locally quadratic involution. Then,
\begin{enumerate}[label={\rm (\roman*)}]
\item \label{prop_obstructions_i} There exists a linear map $f\co \cA \to \cO$ such that $(\cA,\si,f|_{\cSym_{\cA,\si}})$ is a quadratic triple if and only if $\Strong(\cA,\si)=0$. In this case $f=\Trd_{\cA}(\ell \und)$ for an element $\ell \in \cA(S)$ with $\ell+\si(\ell)=1$.
\item \label{prop_obstructions_ii} There exists a linear map $f\co \cSym_{\cA,\si} \to \cO$ such that $(\cA,\si,f)$ is a quadratic triple if and only if $\Weak(\cA,\si)=0$.
\end{enumerate}
\end{thm}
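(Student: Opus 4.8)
The plan is to read off both statements from the long exact cohomology sequences attached to the two bottom rows of \eqref{eq_big_diagram}, together with the classification results already in hand; the only real content beyond bookkeeping is to identify each ``existence of $f$'' condition with surjectivity of a map on global sections, which is precisely what the connecting homomorphisms $\delta$ and $\delta'$ detect.

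For \ref{prop_obstructions_i}, first I would note that a linear form $f\co \cA \to \cO$ with $(\si, f|_{\cSym_{\cA,\si}})$ a quadratic pair on $\cA$ exists if and only if there is an element $\ell \in \cA(S)$ with $\ell + \si(\ell) = 1_{\cA}$. The implication ``$\Leftarrow$'' is Example \ref{quafoex}\ref{quafoex_b}: such an $\ell$ makes $(\cA,\si,\Trd_{\cA}(\ell\und)|_{\cSym_{\cA,\si}})$ a quadratic triple, and $\Trd_{\cA}(\ell\und)\co \cA \to \cO$ is the required extension. For ``$\Rightarrow$'', regularity of the trace form (Lemma \ref{lem_perp}\ref{lem_perp_i}) identifies the linear forms $\cA \to \cO$ with $\cA(S)$, so $f = \Trd_{\cA}(\ell\und)$ for a unique $\ell \in \cA(S)$; imposing $\Trd_{\cA}(\ell(a+\si(a))) = \Trd_{\cA}(a)$ for all $a$ and using regularity once more rewrites this as $\ell + \si(\ell) = 1_{\cA}$, exactly as in the proof of Lemma \ref{lem_extend_l}. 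This equivalence also delivers the last sentence of \ref{prop_obstructions_i}. Now ``$\exists\,\ell \in \cA(S)$ with $(\Id+\si)(\ell) = 1_{\cA}$'' says precisely that $1_{\cA}$ lies in the image of $\cA(S) \xrightarrow{\Id+\si} \cSymd_{\cA,\si}(S)$, and by exactness of the long exact sequence of Lemma \ref{app_connecting_image} applied to the middle row $0 \to \cSkew_{\cA,\si} \to \cA \xrightarrow{\Id+\si} \cSymd_{\cA,\si} \to 0$, this happens if and only if $\delta(1_{\cA}) = \Strong(\cA,\si) = 0$.

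For \ref{prop_obstructions_ii}, I would invoke Theorem \ref{thm_classification}: the set of $f$ making $(\cA,\si,f)$ a quadratic triple is in bijection with $\xi(S)^{-1}(1_{\cA}) \subseteq (\cA/\cAlt_{\cA,\si})(S)$. Hence such an $f$ exists if and only if $1_{\cA}$ is in the image of $(\cA/\cAlt_{\cA,\si})(S) \xrightarrow{\xi(S)} \cSymd_{\cA,\si}(S)$. Applying Lemma \ref{app_connecting_image} (equivalently Lemma \ref{lem_torsor_connecting}, as in Remark \ref{rem_skew_alt_torsor}) to the bottom row $0 \to \cSkew_{\cA,\si}/\cAlt_{\cA,\si} \to \cA/\cAlt_{\cA,\si} \xrightarrow{\xi} \cSymd_{\cA,\si} \to 0$, exactness at $\cSymd_{\cA,\si}(S)$ shows this is equivalent to $\delta'(1_{\cA}) = \Weak(\cA,\si) = 0$. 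Since Section \ref{sec_prelim} records that $\check{H}^1$ and $H^1$ agree compatibly with the connecting maps, it is harmless that $\delta$ and $\delta'$ were defined into \v{C}ech cohomology in Definition \ref{defn_obs}.

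I do not expect a genuine obstacle here: the substantive work sits in Lemma \ref{lem_extend_l}, Theorem \ref{thm_classification}, and the cohomological formalism of Section \ref{sec_prelim}. The one point requiring care is that the bijection of Theorem \ref{thm_classification} with $\xi(S)^{-1}(1_{\cA})$ may describe the empty set, which is exactly the nonvanishing of $\Weak(\cA,\si)$; if one preferred an argument not citing Theorem \ref{thm_classification}, one could instead build $f$ from a preimage $\lambda \in (\cA/\cAlt_{\cA,\si})(S)$ of $1_{\cA}$ via the gluing in Example \ref{ex_cover_construction}\ref{ex_cover_construction_b}, and recover such a $\lambda$ from a given quadratic triple as in the proof of Theorem \ref{thm_classification}\ref{thm_classification_i}.
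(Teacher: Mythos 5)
Your proposal is correct and follows essentially the same route as the paper: part \ref{prop_obstructions_i} via Lemma \ref{lem_extend_l} and Example \ref{quafoex}\ref{quafoex_b} to translate the existence of an extendable $f$ into $1_{\cA}$ lying in the image of $\cA(S)\xrightarrow{\Id+\si}\cSymd_{\cA,\si}(S)$, and part \ref{prop_obstructions_ii} via Theorem \ref{thm_classification} together with exactness of the bottom row of \eqref{eq_long_exacts}. The only cosmetic difference is that you partially re-derive Lemma \ref{lem_extend_l} instead of just citing it; the substance is identical.
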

\begin{proof}
\noindent \ref{prop_obstructions_i}: Assume we have $f\co \cA \to \cO$ such that $(\cA,\si,f|_{\cSym_{\cA,\si}})$ is a quadratic triple. Then trivially $f|_{\cSym_{\cA,\si}}$ can be extended to $\cA$, and so by Lemma \ref{lem_extend_l} there exists an $\ell\in\cA(S)$ such that $\ell+\si(\ell)=1_{\cA}$. Thus $1_{\cA}$ is in the image of $\cA(S)\to \cSymd_{\cA,\si}(S)$ in the long exact cohomology sequence, and so $\Strong(\cA,\si)=0$. Conversely, if $\Strong(\cA,\si)=0$ then $1_{\cA}$ is in the same image and so we obtain such an $\ell$. Using Example \ref{quafoex}\ref{quafoex_b} we can construct $f\co \cA \to \cO$ with the desired property.

\noindent \ref{prop_obstructions_ii}: Since the rows in \eqref{eq_long_exacts} are exact, $\Weak(\cA,\sigma)=0$ if and only if $\xi(S)^{-1}(1_{\cA})$ is non-empty. By Theorem \ref{thm_classification}, this means $\Weak(\cA,\sigma)=0$ if and only if $(\cA,\sigma)$ can be extended to a quadratic triple.
\end{proof}
By Remark \ref{rem_skew_alt_torsor}, $\Weak(\cA,\sigma)$ is the isomorphism class of the $\cSkew_{\cA,\sigma}/\cAlt_{\cA,\sigma}$--torsor $\xi^{-1}(1_{\cA})$. Another view of Theorem \ref{prop_obstructions}\ref{prop_obstructions_ii} is that $\Weak(\cA,\sigma)=0$ if and only if this $\cSkew_{\cA,\sigma}/\cAlt_{\cA,\sigma}$--torsor is trivial. A torsor is trivial if and only if it has a global section and global sections of $\xi^{-1}(1_{\cA})$ correspond to forms $f$ which make $(\cA,\sigma,f)$ a quadratic triple.

For examples of locally quadratic involutions with non-trivial strong or weak obstructions, see section \ref{obs_examples}.

\begin{remark}{\rm
Since $2_{\cA} = 1_{\cA}+\sigma(1_{\cA})$ is always in the image of $\cA(S) \to \cSymd_{\cA,\sigma}(S)$, it follows that $\delta(2_{\cA})=2 \Strong(\cA,\si)=0$ so that $2\Weak(\cA,\si)=0$ also.
In particular, if $2$ is invertible over $S$ we recover the fact
 that $(\cA,\si)$ always extends to a quadratic triple.
  }
\end{remark}

The following lemma describes a special case in which we know the strong obstruction is non-trivial. An example where this occurs is given in Example \ref{example_strong} and Lemma \ref{lem_mukai}.
\begin{lem}\label{lem_nontrivial_strong}
Assume $S$ is non-empty. Let $(\cA,\si)$ be an Azumaya $\cO$--algebra with locally quadratic orthogonal involution such that $\cO(S) \iso \cA(S)$ and $2\cO(S) = 0$. Then, the strong obstruction $\Om(\cA, \si)$ is non-trivial.
\end{lem}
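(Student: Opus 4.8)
The plan is to unwind the definition of the strong obstruction $\Strong(\cA,\si)=\delta(1_{\cA})$ and show that $1_{\cA}$ is not in the image of $\Id+\si\colon \cA(S)\to\cSymd_{\cA,\si}(S)$. By exactness of the top row of \eqref{eq_long_exacts}, this is exactly the statement that $\Strong(\cA,\si)\neq 0$. So the whole problem reduces to: there is no $\ell\in\cA(S)$ with $\ell+\si(\ell)=1_{\cA}$. Since we are told $\cA(S)=\cO(S)$, any such $\ell$ would be a global section $r\in\cO(S)$; and because $\si$ is $\cO$--linear and fixes $1_{\cA}$ (as an algebra anti-automorphism it fixes the identity), on the central section $r\cdot 1_{\cA}$ we get $\si(r\cdot1_{\cA})=r\cdot\si(1_{\cA})=r\cdot1_{\cA}$. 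Hence $\ell+\si(\ell)=2r\cdot1_{\cA}$. Using $2\cO(S)=0$ this equals $0\neq 1_{\cA}$, where the non-vanishing of $1_{\cA}$ in $\cA(S)=\cO(S)$ uses that $\cA$ is non-trivial (so $1_{\cA}\neq 0$ and $\cO(S)\neq 0$). This contradiction shows no such $\ell$ exists, and therefore $\Strong(\cA,\si)\neq 0$.

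More carefully, I would first justify why $\cA(S)=\cO(S)$ forces every section of $\cA(S)$ to be central: the hypothesis literally identifies $\cA(S)$ with the commutative ring $\cO(S)$ acting as scalars, so multiplication in $\cA(S)$ is the ring multiplication of $\cO(S)$ and in particular commutative, and the unit of this ring is $1_{\cA}$. Then I would recall that $(\cA,\si)$ being locally quadratic means $1_{\cA}\in\cSymd_{\cA,\si}(S)$ (Definition \ref{defn_locally_quadratic}), which is what makes $\Strong(\cA,\si)=\delta(1_{\cA})$ well-defined in the first place. The computation $\ell+\si(\ell)=2\ell=0$ for any $\ell\in\cA(S)$ then shows the image of $\Id+\si$ on $\cA(S)$ is zero, so it cannot contain $1_{\cA}\neq 0$. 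By the exact sequence coming from the short exact sequence $0\to\cSkew_{\cA,\si}\to\cA\xrightarrow{\Id+\si}\cSymd_{\cA,\si}\to 0$ (as in Lemma \ref{app_connecting_image} / Lemma \ref{lem_torsor_connecting}), an element of $\cSymd_{\cA,\si}(S)$ maps to $0$ under $\delta$ if and only if it lifts to $\cA(S)$; hence $\delta(1_{\cA})\neq 0$.

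There is essentially no hard step here — the only thing to be careful about is the appeal to non-triviality. One should be explicit that ``non-trivial Azumaya $\cO$--algebra'' means $\cA$ is not the zero sheaf, equivalently $\cA(S)\neq 0$, so that $1_{\cA}\neq 0_{\cA}$ in $\cA(S)$; otherwise $1_{\cA}=0$ would already lie in the image of $\Id+\si$ trivially and the obstruction would vanish. (Implicitly $\cA$ of positive rank everywhere, as Azumaya algebras here are finite locally free, already guarantees this once $S\neq\emptyset$, but it is cleanest to invoke the stated hypothesis directly.) I do not anticipate any genuine obstacle; the statement is a direct consequence of the exactness built into \eqref{eq_long_exacts} together with the two numerical hypotheses $\cA(S)=\cO(S)$ and $2\cO(S)=0$.
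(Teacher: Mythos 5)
Your proof is correct and follows essentially the same route as the paper: both arguments boil down to observing that $\si$ acts as the identity on $\cA(S)=\cO(S)$, so $\Id+\si$ is multiplication by $2=0$ on global sections, whence $1_{\cA}\neq 0$ cannot lift and $\delta(1_{\cA})\neq 0$ by exactness. The paper phrases this as $\cSkew_{\cA,\si}(S)=\cA(S)$ forcing the map $\cA(S)\to\cSymd_{\cA,\si}(S)$ to vanish, but that is the same computation.
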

\begin{proof}  
The exact sequence
\[
0 \to \cSkew_{\calA,\sigma} \to \calA \to \cSymd_{\calA, \sigma}
\to 0
\]
gives rise to the long exact cohomology sequence
\begin{equation}\label{seq17}
0 \to \cSkew_{\calA, \sigma}(S) \to \calA(S) \to \cSymd_{\calA, \sigma}(S)
\xrightarrow{\delta} \check{H}^1(S, \cSkew_{\calA, \sigma}).
\end{equation}
Since $S$ is non-empty and $(\calA, \sigma)$ is locally quadratic, we have $0\neq 1 \in \cSymd_{\calA, \sigma}(S)$. We recall that $\Strong(\calA, \sigma)=\delta(1)$ by Definition \ref{defn_obs}. Since $\calA(S)=\cO(S)$ and $\si(S)$ is $\cO(S)$--linear, all elements of $\cA(S)$ are symmetric, and since $2\cO(S)=0$, all elements of $\cA(S)$ are also skew-symmetric. Hence, $\cSkew_{\cA,\si}(S)=\cO(S)$ also. Therefore, the sequence \eqref{seq17} is of the form
\[
0 \to \cO(S) \xrightarrow{\Id} \cO(S) \to \cSymd_{\calA, \sigma}(S)
\xrightarrow{\delta} \check{H}^1(S, \cSkew_{\calA, \sigma})
\]
and we obtain that $\Strong(\calA, \sigma)=\delta(1) \neq 0$ because 
$\cO(S) \to \cSymd_{\cA,\si}(S)$ is the zero map  in view of surjectivity of $\Id\colon\cO(S)\to \cO(S)$, and so $\delta$ is injective.
\end{proof}

We will make use of the following description in order to perform computations with $\Strong(A,\si)$.
\begin{lem}\label{lem_strong_rep}
Let $(\cA,\si)$ be an Azumaya $\cO$--algebra with locally quadratic involution. The strong obstruction takes the form
\[
\Strong(\cA,\si) = \left[(\ell_j|_{T_{ij}}-\ell_i|_{T_{ij}})_{i,j\in I}\right] \in \check{H}^1(S,\cSkew_{\cA,\sigma})
\]
for any cover $\{T_i\to S\}_{i\in I}$ and elements $\ell_i\in \cA(T_i)$ such that $\ell_i+\si(\ell_i)=1$.
\end{lem}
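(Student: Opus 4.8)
The plan is to unwind the definition of the connecting map $\delta$ for the \v{C}ech cohomology, as spelled out in Lemma \ref{app_connecting_image} (in its abelian form, stated right afterwards), and apply it to the element $1_{\cA} \in \cSymd_{\cA,\sigma}(S)$. Recall that by Definition \ref{defn_obs}, $\Strong(\cA,\si) = \delta(1_{\cA})$, where $\delta$ is the connecting morphism arising from the exact sequence
\[
0 \to \cSkew_{\cA,\sigma} \to \cA \xrightarrow{\Id+\sigma} \cSymd_{\cA,\sigma} \to 0.
\]

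First I would observe that, since $\sigma$ is locally quadratic, $1_{\cA} \in \cSymd_{\cA,\sigma}(S)$ by definition (Definition \ref{defn_locally_quadratic} / Lemma \ref{lem_smart_equiv}), and moreover by the equivalence in Lemma \ref{lem_smart_equiv} (or directly by Lemma \ref{lem_pair_affine}\ref{lem_pair_affine_i} applied to an affine open cover) there exists an fppf (indeed affine open) cover $\{T_i \to S\}_{i\in I}$ together with elements $\ell_i \in \cA(T_i)$ such that $\ell_i + \si(\ell_i) = 1_{\cA(T_i)}$; that is, $(\Id+\sigma)(\ell_i) = 1_{\cA}|_{T_i}$. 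So the $\ell_i$ are precisely local preimages of $1_{\cA}$ under the surjection $\Id+\sigma$. Then I would invoke the explicit formula for the connecting morphism: in the notation following Lemma \ref{app_connecting_image}, for an abelian exact sequence $0 \to \cF' \to \cF \to \cF'' \to 0$ and $f'' \in \cF''(S)$ with local lifts $f_i$, one has $\check{\delta}(f'') = [(f_j|_{T_{ij}} - f_i|_{T_{ij}})_{i,j\in I}]$. Applying this with $\cF'' = \cSymd_{\cA,\sigma}$, $\cF = \cA$, $\cF' = \cSkew_{\cA,\sigma}$, $f'' = 1_{\cA}$ and $f_i = \ell_i$ yields immediately
\[
\Strong(\cA,\si) = \delta(1_{\cA}) = \left[(\ell_j|_{T_{ij}} - \ell_i|_{T_{ij}})_{i,j\in I}\right] \in \check{H}^1(S, \cSkew_{\cA,\sigma}).
\]

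The only point requiring a word is that the cocycle entries $\ell_j|_{T_{ij}} - \ell_i|_{T_{ij}}$ genuinely lie in $\cSkew_{\cA,\sigma}(T_{ij})$, which is clear since $(\Id+\sigma)(\ell_j - \ell_i) = 1_{\cA} - 1_{\cA} = 0$ on $T_{ij}$, so they define a class in $\check{H}^1(S,\cSkew_{\cA,\sigma})$ as claimed. The statement asserts this holds for \emph{any} such cover and \emph{any} such choice of $\ell_i$; this independence is automatic because $\delta(1_{\cA})$ is a well-defined element of the colimit $\check{H}^1(S,\cSkew_{\cA,\sigma})$ and Lemma \ref{app_connecting_image} already guarantees the formula is valid for an arbitrary choice of cover and lifts. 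There is no real obstacle here — the lemma is essentially a restatement of the definition of $\Strong(\cA,\si)$ together with the explicit description of the \v{C}ech connecting map — so the proof is short; the only mild care needed is to match the sign/indexing convention ($\ell_j - \ell_i$ versus $\ell_i - \ell_j$) with the one fixed after Lemma \ref{app_connecting_image}, and to note that both conventions give the same subgroup of cocycles up to the sign automorphism of $\cSkew_{\cA,\sigma}$, hence the same obstruction being zero or not, which is all that matters for the applications.
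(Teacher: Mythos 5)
Your proof is correct and coincides with the paper's, which simply cites Lemma \ref{app_connecting_image}: you unwind the \v{C}ech connecting morphism applied to $1_{\cA}$, using the local lifts $\ell_i$ guaranteed by local quadraticity, exactly as intended. The additional remarks on the cocycle entries landing in $\cSkew_{\cA,\sigma}$ and on independence of the choices are fine and do not change the argument.
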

\begin{proof}
This follows immediately from Lemma \ref{app_connecting_image}.
\end{proof}

\begin{remarks}
\begin{enumerate}[wide, labelindent=0pt, label={\rm (\alph*)}]
\item By Lemma \ref{lem_smart_equiv}, if we wish we may represent $\Strong(\cA,\si)$ as in Lemma \ref{lem_strong_rep} with an affine open cover $\{U_i \to S\}_{i\in I}$.
\item Since the weak obstruction $\Weak(\cA,\si)$ is simply an image of the strong obstruction, it will be represented similarly by the class
\[
\left[(\overline{\ell_j|_{T_{ij}} - \ell_i|_{T_{ij}}})_{i,j\in I}\right] \in \check{H}^1(S,\cSkew_{\cA,\si}/\cAlt_{\cA,\si})
\]
for a cover $\{T_i\to S\}_{i\in I}$ and elements $\ell_i\in \cA(T_i)$ with $\ell_i+\si(\ell_i)=1$, where the overline denotes the image in $(\cSkew_{\cA,\si}/\cAlt_{\cA,\si})(T_{ij})$. This cover can of course also be taken to be an affine open cover if desired.
\end{enumerate}
\end{remarks}

\subsection{Alternate Obstructions to Extend a Form}\label{sec_alternate_obstructions}
Let $(\calA, \sigma, f)$ be a quadratic triple. Lemma \ref{lem_extend_l} above states that $f$ extends from $\cSym_{\calA, \sigma}$ to $\calA$
if and only if $f$ arises from some $\ell \in \cA(S)$ satisfying $\ell + \sigma(\ell)=1$. Since $\sigma$ is orthogonal, $\cAlt_{\cA,\sigma}$ is finite locally free by Lemma \ref{locdirsum}\ref{locdirsum_ii} and hence is quasi-coherent. The cohomology theory of quasi-coherent sheaves provides an obstruction to such an extension, in particular an obstruction $c(f) \in \check{H}^1(S, \cAlt_{\cA,\sigma})$ as follows. By Theorem \ref{thm_classification}, the form $f$ corresponds to an element $\lambda_f \in (\cA/\cAlt_{\cA,\sigma})(S)$, and we may consider its image under the map $c$ in \eqref{eq_long_exacts}. We set $c(f) = c(\lambda_f) \in \check{H}^1(S, \cAlt_{\cA,\sigma})$. Clearly this obstruction prevents $\lambda_f=\overline{\ell}$ for some $\ell\in \cA(S)$, which would be the case exactly when $f$ can be extended to all of $\cA$.

Consider an affine covering $\{U_i \to S\}_{i\in I}$ of $S$. By Proposition \ref{lem_pair_affine}\ref{lem_pair_affine_i}, each restriction $f|_{U_i}$ will be of the form $\Trd_{\cA|_{U_i}}(\ell_i\und)$ for an $\ell_i\in \cA(U_i)$ with $\overline{\ell_i} = \lambda_f|_{U_i}$. Lemma \ref{app_connecting_image} then tells us that the 1-cocycle $(\ell_j|_{U_{ij}} - \ell_i|_{U_{ij}})_{i,j\in I}$ represents the cohomological obstruction $c(f)$.

However, we also obtain another obstruction in the following way. We have an exact sequence of sheaves
\[
0 \to \cSym_{\cA,\si} \to \cA \to \cAlt_{\cA,\si} \to 0
\]
which, since the sheaves are finite locally free, we may dualize to obtain another exact sequence,
\[
0 \to \cAlt_{\cA,\si}^\vee \to \cA^\vee \to \cSym_{\cA,\si}^\vee \to 0.
\]
A portion of the long exact cohomology sequence associated with this exact sequence is
\[
\ldots \to \cA^\vee \to \cSym_{\calA, \sigma}^\vee \to \check{H}^1(S,\cAlt_{\cA,\si}^\vee) \to \ldots
\]
Since $f \in \cSym_{\calA, \sigma}^\vee(S)$, we then get a class $c'(f) \in  \check{H}^1( S, \cAlt_{\calA,\sigma}^\vee)$ which is the obstruction to extend $f$ to $\calA$. Not surprisingly, we can compare the two obstructions.

\begin{lem}\label{lem_comparison}
Let $(\cA,\si,f)$ be a quadratic triple, and let $\widehat{b_-}\colon \cAlt_{A, \sigma} \simlgr  \cAlt_{A, \sigma}^\vee$ be the isomorphism associated to the regular bilinear form $b_-$ provided by Lemma \ref{lem_pairing}\ref{lem_pairing2}. Then,
\begin{enumerate}[label={\rm (\roman*)}]
\item \label{lem_comparison1}
$c'(f)= \widehat{b_-}(c(f))$.
\item \label{lem_comparison2}
The image of $c(f)$ under the map $\check{H}^1(S, \cAlt_{\calA, \sigma}) \to \check{H}^1(S, \cSkew_{\calA, \sigma})$ induced by the inclusion $\cAlt_{\cA,\si}\to \cSkew_{\cA,\si}$, is $\Strong(\cA, \sigma)$.
\end{enumerate}
\end{lem}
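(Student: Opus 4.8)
The plan is to verify both identities at the level of \v{C}ech cocycles, using the explicit description of the connecting maps from Lemma \ref{app_connecting_image} together with the cocycle representatives recorded in Lemma \ref{lem_strong_rep} and in the discussion of Section \ref{sec_alternate_obstructions}. First I would fix an affine cover $\{U_i \to S\}_{i\in I}$ and, by Proposition \ref{lem_pair_affine}\ref{lem_pair_affine_i}, choose elements $\ell_i \in \cA(U_i)$ with $\ell_i + \sigma(\ell_i) = 1_{\cA}$ and $f|_{U_i} = \Trd_{\cA}(\ell_i\,\und)$, so that $\lambda_f|_{U_i} = \overline{\ell_i}$. Then the cocycle $(\ell_j|_{U_{ij}} - \ell_i|_{U_{ij}})_{i,j\in I}$ simultaneously represents $c(f) \in \check{H}^1(S,\cAlt_{\cA,\si})$ (this is exactly what the discussion preceding the Lemma records) and, via the inclusion $\cAlt_{\cA,\si} \hookrightarrow \cSkew_{\cA,\si}$, represents $\Strong(\cA,\si)$ by Lemma \ref{lem_strong_rep}. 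This gives \ref{lem_comparison2} immediately: the map on $\check{H}^1$ induced by an inclusion of sheaves is just ``the same cocycle, viewed in the larger sheaf,'' and the two prescriptions produce literally the same cocycle $(\ell_j - \ell_i)$.

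For part \ref{lem_comparison1}, the point is that the isomorphism $\widehat{b_-}\colon \cAlt_{\cA,\si} \iso \cAlt_{\cA,\si}^\vee$ is an isomorphism of sheaves, hence induces an isomorphism $\check{H}^1(S,\cAlt_{\cA,\si}) \iso \check{H}^1(S,\cAlt_{\cA,\si}^\vee)$ compatible with everything in sight; so it suffices to check that $\widehat{b_-}$ carries the cocycle representing $c(f)$ to one representing $c'(f)$. The cocycle for $c'(f)$ comes from choosing, over each $U_i$, a preimage of $f|_{U_i} \in \cSym_{\cA,\si}^\vee(U_i)$ under the dual surjection $\cA^\vee \to \cSym_{\cA,\si}^\vee$ and taking the difference over $U_{ij}$. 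A natural preimage of $f|_{U_i}$ is $\Trd_{\cA}(\ell_i\,\und)$ regarded as a functional on all of $\cA|_{U_i}$; under the dual of $\cA \to \cAlt_{\cA,\si}$ — which dualizes the inclusion $\cSym_{\cA,\si} \hookrightarrow \cA$, and uses that $\cAlt_{\cA,\si}^\vee$ is identified with the functionals on $\cA$ vanishing on $\cSym_{\cA,\si}$, equivalently (via $b_-$ and Lemma \ref{lem_perp}) with functionals of the form $\Trd_{\cA}(a\,\und)$ with $a \in \cAlt_{\cA,\si}$ — the difference $\Trd_{\cA}(\ell_j\,\und) - \Trd_{\cA}(\ell_i\,\und)$ restricted to $\cAlt_{\cA,\si}^\vee$ corresponds to $\Trd_{\cA}\big((\ell_j - \ell_i)\,\und\big)$, which is exactly $\widehat{b_-}(\ell_j - \ell_i)$ by the defining property of $b_-$ in Lemma \ref{lem_pairing}\ref{lem_pairing2}. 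Comparing with the cocycle $(\ell_j - \ell_i)$ for $c(f)$ then yields $c'(f) = \widehat{b_-}(c(f))$.

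I would then note that one should double-check the sign and side conventions: the connecting map in Lemma \ref{app_connecting_image} is written with $(g_j - g_i)$ (not $(g_i - g_j)$), and the identification of $\cAlt_{\cA,\si}^\vee$ with a submodule of $\cA^\vee$ via the trace pairing must be set up so that it is genuinely $\widehat{b_-}$ on the nose and not its negative; Lemma \ref{lem_perp}\ref{lem_perp_iii} ($\cAlt_{\cA,\si}^\perp = \cSym_{\cA,\si}$) is what guarantees that $\Trd_{\cA}(a\,\und)$ with $a \in \cAlt_{\cA,\si}$ indeed kills $\cSym_{\cA,\si}$ and hence descends to $\cAlt_{\cA,\si}^\vee$, and that the resulting map is exactly $\widehat{b_-}$. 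The main obstacle is bookkeeping rather than substance: one must be careful that the ``natural preimage'' $\Trd_{\cA}(\ell_i\,\und)$ of $f|_{U_i}$ in $\cA^\vee(U_i)$ used to compute $c'(f)$ is compatible with the preimage $\overline{\ell_i}$ of $\lambda_f|_{U_i}$ used to compute $c(f)$, and that the two dual exact sequences are genuinely dual (which holds since all four sheaves $\cSym_{\cA,\si}, \cAlt_{\cA,\si}, \cSkew_{\cA,\si}, \cSymd_{\cA,\si}$ are finite locally free by Lemma \ref{locdirsum}). Once the identifications are pinned down, both equalities are immediate from the cocycle formulas.
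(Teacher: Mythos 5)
Your proposal is correct and follows essentially the same route as the paper: the same affine cover and lifts $\ell_i$ with $\ell_i+\sigma(\ell_i)=1$ from Proposition \ref{lem_pair_affine}\ref{lem_pair_affine_i}, the same cocycle $(\ell_j|_{U_{ij}}-\ell_i|_{U_{ij}})$ representing both $c(f)$ and, via the inclusion $\cAlt_{\cA,\si}\hookrightarrow\cSkew_{\cA,\si}$ and Lemma \ref{lem_strong_rep}, the class $\Strong(\cA,\si)$, and the same use of Lemma \ref{lem_pairing}\ref{lem_pairing2} to identify the cocycle for $c'(f)$ with $\widehat{b_-}$ applied to the one for $c(f)$. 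The bookkeeping point you flag resolves as you hope: since the injection $\cAlt_{\cA,\si}^\vee\to\cA^\vee$ is the dual of $\Id-\sigma$, the element of $\cAlt_{\cA,\si}^\vee(U_{ij})$ determined by $\Trd_{\cA}\bigl((\ell_j-\ell_i)\und\bigr)$ sends $a-\sigma(a)$ to $\Trd_{\cA}\bigl((\ell_j-\ell_i)a\bigr)=b_-\bigl(\ell_j-\ell_i,\,a-\sigma(a)\bigr)$, which is $\widehat{b_-}(\ell_j-\ell_i)$ on the nose, with no extraneous sign or factor of $2$.
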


\begin{proof}
\ref{lem_comparison1}: Take an affine open covering $\{U_i\to S\}_{i \in I}$ of $S$.
Then $f|_{U_i}(s)=\Trd_{\calA}(\ell_i s)$ for some
$\ell_i  \in \calA(U_i)$
 satisfying $\ell_i+ \sigma(\ell_i)=1$ by Proposition \ref{lem_pair_affine}\ref{lem_pair_affine_i}.
By definition $c(f)$ is the class of the $1$--cocycle
$c_{ij}=\ell_j|_{U_{ij}} - \ell_i|_{U_{ij}}$ with values in
$\cAlt_{\cA, \sigma}$. We define  $\widetilde f_i(a)=  \Trd_{\calA}(\ell_i a)$ on $\calA|_{U_i}$ for $i\in I$.

On the other hand, $c'(f)$ is the class of the $1$--cocycle
$c'_{ij}=(\widetilde f_j)|_{U_{ij}} - (\widetilde f_i)|_{U_{ij}}$
with values in $\cAlt^\vee_{\calA, \sigma}$.

We need to check that $c_{ij}'= \widehat{b_-}(c_{ij})$ for all $i,j\in I$, or equivalently
that $c'_{ij}= b_{-}(c_{ij},\und)$ over $U_{ij}$.
Up to localization to an affine open subset $V \subseteq U_{ij}$,
we can deal with elements of the form
$x=x'- \sigma(x')$ for some $x' \in \calA(V )$.
It follows that
\begin{eqnarray} \nonumber
c'_{ij}(x) &=& (\widetilde f_j)|_{U_{ij}}(x'-\sigma(x')) -
(\widetilde f_i)|_{U_{ij}}(x'-\sigma(x')) \\ \nonumber
&=&  \Trd_{\calA}\bigl(\ell_j|_{U_{ij}} (x'- \sigma(x') \bigr) -
\Trd_{\calA}\bigl(\ell_i|_{U_{ij}} (x'- \sigma(x') \bigr) \\ \nonumber
&=&  \Trd_{\calA}\bigl( (\ell_j|_{U_{ij}} -\ell_i|_{U_{ij}})
(x'- \sigma(x') \bigr) \\ \nonumber
&=&  b_{-}\bigl( c_{ij}, x\bigr)
\end{eqnarray}
as desired.

\sm

\noindent \ref{lem_comparison2}: We have that $1_{\calA(U_i)}= \ell_i +\sigma(\ell_i)$
for each $i$. By Lemma \ref{lem_strong_rep}, the strong obstruction $\Strong(A, \sigma)$ is represented
by the $1$--cocycle $\ell_j|_{U_{ij}} - \ell_i|_{U_{ij}}$ with values in
$\cSkew_{\cA, \sigma}$. We conclude that
the image of $c(f)$ by the map
 $\check{H}^1(S, \cAlt_{\calA, \sigma}) \to \check{H}^1(S, \cSkew_{\calA, \sigma})$
 is $\Strong(A, \sigma)$.
\end{proof}

\subsection{Obstructions for Tensor Products}\label{subsec_obs_tensor}
Consider Azumaya algebras $(\cA_1,\si_1)$ and $(\cA_2,\si_2)$ either both with orthogonal or both with symplectic involutions. Their tensor product $(\cA,\si)=(\cA_1\otimes_{\cO}\cA_2,\si_1\otimes \si_2)$ is an Azumaya algebra with orthogonal involution. There is a natural map
\begin{align*}
\cA_1 \times \cA_2 &\to \cA \\
(a_1,a_2) &\mapsto a_1\otimes a_2
\end{align*}
which restricts to a morphism
\[
\cSkew_{\cA_1,\si_1} \times \cSym_{\cA_2,\si_2} \to \cSkew_{\cA,\si}
\]
which in turn induces a morphism
\[
(\cSkew_{\cA_1,\si_1}/\cAlt_{\cA_1,\si_1})\times \cSym_{\cA_2,\si_2} \to \cSkew_{\cA,\si}/\cAlt_{\cA,\si}.
\]
We will make use of these tensor morphisms throughout this section to investigate the strong and weak obstructions for $(\cA,\si)$. When $\si_i$ are both orthogonal we will assume $\si_1$ is locally quadratic and we will relate $\Strong(\cA,\si)$ to $\Strong(\cA_1,\si_1)$. When $\si_i$ are both symplectic we also have $1_{\cA_1}\in \cSymd_{\cA_1,\si_1}$ by Lemma \ref{lem_symp_1_symd} and a connecting morphism $\delta_1 \co \cSymd_{\cA_1,\si_1}(S) \to \check{H}^1(S,\cSkew_{\cA_1,\si_1})$. In this case we will relate $\Strong(\cA,\si)$ to $\delta_1(1_{\cA_1})$, which is only technically a strong obstruction when $2=0\in \cO$ and so $\si_1$ would be simultaneously locally quadratic as well as symplectic.

\begin{lem}\label{tens_triple_invol_obs}
Let $(\cA_1,\si_1)$  and  $(\cA_2,\si_2)$ be two Azumaya $\cO$--algebra with orthogonal involution. Let  $(\cA,\si)$ be their tensor product. Assume that $(\cA_1,\si_1)$ is  locally quadratic. Then, we have the following.

\begin{enumerate}[label={\rm (\roman*)}]

\item \label{obs1}   $(\cA,\si)$ is locally quadratic.

\item \label{obs2} We have
\[
\Strong(\cA,\si) =\Strong(\cA_1,\si_1)  \cup 1_{\calA_2} \in \check{H}^1(S, \cSkew_{\calA, \sigma})
\]
where $1_{\calA_2} \in \cSym_{\calA_2, \sigma_2}(S) = H^0(S,\cSym_{\cA_2,\si_2})$ and the cup-product arises from the morphism
\[
\cSkew_{\calA_1, \sigma_1} \times  \cSym_{\calA_2, \sigma_2} \to \cSkew_{\calA, \sigma}.
\]

\item \label{obs3} We have $\Weak(\cA,\si) =\Weak(\cA_1,\si_1) \cup 1_{\calA_2} \in \check{H}^1(S, \cSkew_{\calA, \sigma}/ \cAlt_{\calA, \sigma})$ where the cup-product arises from the morphism
\[
(\cSkew_{\calA_1, \sigma_1} / \cAlt_{\calA_1, \sigma_1}) \times  \cSym_{\calA_2, \sigma_2} \to \cSkew_{\calA, \sigma}/ \cAlt_{\calA, \sigma}.
\]
\end{enumerate}

\end{lem}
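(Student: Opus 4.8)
The plan is to reduce everything to the explicit cocycle description of the strong obstruction given in Lemma \ref{lem_strong_rep}, and then carry the cocycle across the tensor morphism. First I would prove \ref{obs1}: since $(\cA_1,\si_1)$ is locally quadratic, $1_{\cA_1}\in\cSymd_{\cA_1,\si_1}(S)$, so fppf-locally (say over a cover $\{T_i\to S\}$) there exist $\ell_i\in\cA_1(T_i)$ with $\ell_i+\si_1(\ell_i)=1_{\cA_1}$; then $\ell_i\otimes 1_{\cA_2}\in\cA(T_i)$ satisfies $(\ell_i\otimes 1)+(\si_1\otimes\si_2)(\ell_i\otimes 1)=(\ell_i+\si_1(\ell_i))\otimes 1_{\cA_2}=1_{\cA}$, so $1_{\cA}\in\cSymd_{\cA,\si}(S)$ by the sheaf property, which by Lemma \ref{lem_smart_equiv}\ref{lem_smart_equiv_iv} (using Proposition \ref{tens_triple_invol} or just Example \ref{ex_cover_construction}\ref{quafoex_c} to produce the local $f_i$) means $(\cA,\si)$ is locally quadratic.

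For \ref{obs2}, I would use the same cover $\{T_i\to S\}$ and the same local sections $\ell_i\in\cA_1(T_i)$, which by Lemma \ref{lem_strong_rep} represent $\Strong(\cA_1,\si_1)$ via the cocycle $z_{ij}=\ell_j|_{T_{ij}}-\ell_i|_{T_{ij}}\in\cSkew_{\cA_1,\si_1}(T_{ij})$. Since $\ell_i\otimes 1_{\cA_2}$ are valid local lifts witnessing $1_{\cA}\in\cSymd_{\cA,\si}(S)$, Lemma \ref{lem_strong_rep} applied to $(\cA,\si)$ says $\Strong(\cA,\si)$ is represented by the cocycle $(\ell_j\otimes 1)|_{T_{ij}}-(\ell_i\otimes 1)|_{T_{ij}}=z_{ij}\otimes 1_{\cA_2}$ with values in $\cSkew_{\cA,\si}$. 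This is exactly the image of the cocycle $z_{ij}$ under the morphism $\cSkew_{\cA_1,\si_1}\times\cSym_{\cA_2,\si_2}\to\cSkew_{\cA,\si}$ evaluated at $(z_{ij},1_{\cA_2})$, which is by definition the cup product $\Strong(\cA_1,\si_1)\cup 1_{\cA_2}$ (here $1_{\cA_2}$ is a global section of $\cSym_{\cA_2,\si_2}$, so cupping with it is just the $\check{H}^1$-level map induced by the pairing of sheaves). One should check that $1_{\cA_2}\in\cSym_{\cA_2,\si_2}(S)$ indeed: $\si_2(1_{\cA_2})=1_{\cA_2}$ is clear. Then \ref{obs3} follows by applying the surjection $\cSkew_{\cA,\si}\to\cSkew_{\cA,\si}/\cAlt_{\cA,\si}$ (and the compatible $\cSkew_{\cA_1,\si_1}\to\cSkew_{\cA_1,\si_1}/\cAlt_{\cA_1,\si_1}$): the weak obstruction is the image of the strong obstruction under this map by the remarks following Definition \ref{defn_obs}, and the tensor morphism descends to $(\cSkew_{\cA_1,\si_1}/\cAlt_{\cA_1,\si_1})\times\cSym_{\cA_2,\si_2}\to\cSkew_{\cA,\si}/\cAlt_{\cA,\si}$ as noted in the preamble to this subsection, so the cup product formula descends as well.

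The main obstacle, and the point requiring care rather than calculation, is making the notion of "cup product with a global section" precise and checking that the diagram of cocycle maps genuinely commutes: one must verify that the morphism of sheaves $\cSkew_{\cA_1,\si_1}\times\cSym_{\cA_2,\si_2}\to\cSkew_{\cA,\si}$ (which is only bi-additive, not a map of abelian sheaves in one variable until one fixes the second argument) induces a well-defined map $\check{H}^1(S,\cSkew_{\cA_1,\si_1})\to\check{H}^1(S,\cSkew_{\cA,\si})$ once $1_{\cA_2}$ is fixed, and that this map sends the cocycle class $[z_{ij}]$ to $[z_{ij}\otimes 1_{\cA_2}]$; this is routine once one observes that for fixed $s_2\in\cSym_{\cA_2,\si_2}(S)$ the assignment $x\mapsto x\otimes s_2$ is a morphism of abelian sheaves $\cSkew_{\cA_1,\si_1}\to\cSkew_{\cA,\si}$ (additivity in the first variable and compatibility with restriction are immediate), so it induces a map on \v{C}ech $H^1$ by functoriality, and it takes the representing cocycle of $\Strong(\cA_1,\si_1)$ to the representing cocycle of $\Strong(\cA,\si)$ by the computation above. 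I would also remark that the independence of $\Strong$ from the chosen cover and local lifts (Lemma \ref{lem_strong_rep}) is what guarantees that choosing the \emph{same} $\ell_i$ for both algebras is legitimate and makes the comparison work; no refinement argument beyond passing to a common cover is needed.
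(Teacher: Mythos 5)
Your proposal is correct and in substance identical to the paper's proof: part (i) follows from the local sections $\ell_i\otimes 1_{\cA_2}$, part (iii) from applying the quotient $\cSkew_{\cA,\si}\to\cSkew_{\cA,\si}/\cAlt_{\cA,\si}$, and part (ii) hinges on the observation that $\und\otimes 1_{\cA_2}$ carries the data witnessing $1_{\cA_1}\in\cSymd_{\cA_1,\si_1}(S)$ to data witnessing $1_{\cA}\in\cSymd_{\cA,\si}(S)$. The only cosmetic difference is that for (ii) you work with explicit \v{C}ech representatives via Lemma \ref{lem_strong_rep}, whereas the paper phrases the same computation as naturality of the connecting map for the morphism of short exact sequences induced by $\otimes 1_{\cA_2}$ --- your cocycle version is exactly the device the paper itself uses in the symplectic analogue, Lemma \ref{lem_symplectic_strong}.
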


\begin{proof}

\noindent \ref{obs1}: The statement is local and follows then from the construction of tensor products in Proposition \ref{tens_triple_invol}.
\sm

\noindent \ref{obs2}: The map $\otimes 1_{\calA_2}\colon \calA_1 \to \calA$, $a_1 \mapsto a_1 \otimes 1_{\calA_2}$, restricts to both $\cSkew_{\cA_1,\si_1}\to\cSkew_{\cA,\si}$ and $\cSymd_{\cA_1,\si_1}\to\cSymd_{\cA,\si}$. Therefore, we have a diagram
\[\xymatrix{
0 \ar[r] &\cSkew_{\cA_1,\si_1} \ar[d]^{\otimes 1_{\calA_2}} \ar[r] & \ar[d]^{\otimes 1_{\calA_2}} \cA_1 \ar[r]^{1+\si_1 \quad } & \cSymd_{\cA_1,\si_1} \ar[d]^{\otimes 1_{\cA_2}} \ar[r] & 0. \\
0 \ar[r] &\cSkew_{\cA,\si } \ar[r] & \cA \ar[r]^{1+\si \quad} & \cSymd_{\cA,\si} \ar[r] & 0.
}\]
whose rows are exact sequences of $\cO$--modules. We claim this diagram commutes. Commutativity of the left square is clear, so we check commutativity of the right square with a computation. Let $a\in \cA_1(T)$ be a section for some $T\in \Sch_S$, then
\begin{multline*}
(1+\si)\circ (\otimes 1_{\cA_2})(a) = (1+\si)(a\otimes 1_{\cA_2}) = a\otimes 1_{\cA_2} + (\si_1\otimes \si_2)(a\otimes 1_{\cA_2})\\
= a\otimes 1_{\cA_2} + \si_1(a)\otimes 1_{\cA_2} =(a+\si_1(a))\otimes 1_{\cA_2} = (\otimes 1_{\cA_2})\circ (1+\si_1)(a).
\end{multline*}
Thus, we get another commutative diagram involving boundary maps,
\[
\begin{tikzcd}
\cSymd_{\cA_1,\si_1}(S)\arrow{r}{\delta_1} \arrow{d}{\otimes 1_{\cA_2}} & \check{H}^1(S,\cSkew_{\cA_1,\si_1})  \arrow{d}{\cup  1_{\calA_2}}  \\
\cSymd_{\cA,\si}(S)  \arrow{r}{\delta} & \check{H}^1(S,\cSkew_{\cA,\si}).
\end{tikzcd}
\]
Since $1_{\cA} = 1_{\cA_1}\otimes 1_{\cA_2}$, the commutativity of the diagram implies that $\delta(1_{\calA})= \delta_1(1_{\calA_1}) \cup 1_{\calA_2}$
whence the desired formula
\[
\Strong(\cA,\si) =\Strong(\cA_1,\si_1)  \cup 1_{\calA_2} \in \check{H}^1(S, \cSkew_{\calA, \sigma}).
\]

\noindent \ref{obs3}: This follows from \ref{obs2} and the commutativity of the square
\[\xymatrix{
\cSkew_{\cA_1,\si_1} \times \cSym_{\cA_2,\si_2}  \ar[d]^{\pi_1 \times \Id} \ar[r] &  \cSkew_{\cA,\si} \ar[d]^\pi \\
(\cSkew_{\cA_1,\si_1}/\cAlt_{\calA_1, \sigma_1}) \times \cSym_{\cA_2,\si_2}   \ar[r] &  \cSkew_{\cA,\si}/ \cAlt_{\calA, \sigma}.
}\]
where the horizontal maps are the tensor morphisms.
\end{proof}

\begin{remark}
Lemma \ref{tens_triple_invol_obs}\ref{obs3} shows that if $(\cA_1,\si_1)$ is extendable to a quadratic pair, then so is $(\cA,\si)$. We actually already knew this from Proposition \ref{tens_triple_invol}. We shall see later in Proposition \ref{prop_serre} that the converse is false.
\end{remark}

A consequence of Proposition \ref{tens_symplectic} is the following.
\begin{cor}\label{cor_symplectic_ext}
Let $(\cA_1,\si_1)$ and $(\cA_2,\si_2)$ be two Azumaya $\cO$--algebras with symplectic involutions. Let  $(\cA_1\otimes_{\cO} \cA_2,\si_1\otimes\si_2)$ be the  corresponding Azumaya $\cO$--algebra with orthogonal involution. Then $\Weak(\cA,\si )=0$.
\end{cor}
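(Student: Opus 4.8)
The statement Corollary \ref{cor_symplectic_ext} asserts that for two Azumaya $\cO$--algebras with symplectic involutions $(\cA_1,\si_1)$ and $(\cA_2,\si_2)$, the tensor product $(\cA,\si)=(\cA_1\otimes_{\cO}\cA_2,\si_1\otimes\si_2)$ has $\Weak(\cA,\si)=0$.

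The plan is to deduce this directly from Proposition \ref{tens_symplectic}, which is the substantial result that already does the work. That proposition produces an explicit global linear form $f_\otimes\colon \cSym_{\cA,\si}\to\cO$ such that $(\cA,\si,f_\otimes)$ is a quadratic triple (the condition $f_\otimes(s_1\otimes s_2)=0$ on skew sections is extra information not needed here). So the argument is essentially one line: by Proposition \ref{tens_symplectic} there exists a linear form $f$ with $(\cA,\si,f)$ a quadratic triple; hence by Theorem \ref{prop_obstructions}\ref{prop_obstructions_ii} (equivalently Theorem \ref{thm_classification} together with Remark \ref{rem_skew_alt_torsor}) we conclude $\Weak(\cA,\si)=\delta'(1_{\cA})=0$. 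One should note at the outset that $\si=\si_1\otimes\si_2$ is orthogonal: each $\si_i$ is symplectic hence of type $\veps_i=-1$, so by the discussion at the start of Section \ref{tens} (via \cite[8.1.3(1)]{K} applied locally) the type of $\si$ is $\veps_1\veps_2=1$, i.e.\ $\si$ is orthogonal; and it is locally quadratic since a quadratic triple on $(\cA,\si)$ exists. This makes the invocation of Theorem \ref{prop_obstructions}\ref{prop_obstructions_ii} legitimate.

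\begin{proof}
Since $\si_1$ and $\si_2$ are symplectic, they have type $-1$, and hence $\si=\si_1\otimes\si_2$ has type $(-1)(-1)=1$ by the remark at the beginning of this section; that is, $\si$ is an orthogonal involution on the Azumaya $\cO$--algebra $\cA=\cA_1\otimes_{\cO}\cA_2$. By Proposition \ref{tens_symplectic} there exists a linear form $f\colon\cSym_{\cA,\si}\to\cO$ such that $(\cA,\si,f)$ is a quadratic triple; in particular $\si$ is locally quadratic by Lemma \ref{lem_1_in_Symd} and the definition of locally quadratic involution. Applying Theorem \ref{prop_obstructions}\ref{prop_obstructions_ii}, the existence of such an $f$ is equivalent to $\Weak(\cA,\si)=0$, which proves the claim.
\end{proof}

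There is essentially no obstacle here: the content lies entirely in Proposition \ref{tens_symplectic}, and the present corollary is merely its reinterpretation through the obstruction-theoretic framework of Section \ref{obs}. The only point requiring a word of care is confirming that $\si$ is orthogonal (so that $\Weak(\cA,\si)$ is even defined), which follows from the multiplicativity of types; everything else is a formal consequence of results already established.
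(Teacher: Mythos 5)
Your proof is correct and matches the paper's intent exactly: the paper states this corollary as an immediate consequence of Proposition \ref{tens_symplectic} (which supplies a quadratic triple on the tensor product), combined with Theorem \ref{prop_obstructions}\ref{prop_obstructions_ii}. Your additional check that $\si$ is orthogonal is a sensible, if routine, piece of bookkeeping.
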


\begin{remark}
 {\rm In the setting of Corollary \ref{cor_symplectic_ext}, Remark \ref{rem_symplectic_obs} provides
 an explicit example such that $\Strong(\cA,\si ) \not =0$.
 }
\end{remark}

The case of the strong obstruction resulting from two symplectic involutions is similar to Lemma \ref{tens_triple_invol_obs}\ref{obs2}.
\begin{lem}\label{lem_symplectic_strong}
Let $(\cA_1,\si_1)$ and $(\cA_2,\si_2)$ be two Azumaya $\cO$--algebras with symplectic involutions. Let $(\cA,\si)=(\cA_1\otimes_{\cO} \cA_2,\si_1\otimes\si_2)$ be the tensor Azumaya $\cO$--algebra with orthogonal involution. According to Lemma \ref{lem_symp_1_symd}, we have $1_{\cA_1} \in \cSymd_{\cA_1,\si_1}(S)$. Consider the boundary map $\delta_1 \colon \cSymd_{\cA_1,\si_1}(S)\to \check{H}^1(S,\cSkew_{\cA_1, \si_1})$ arising from the exact sequence of $\cO$--modules $0 \to \cSkew_{\calA_1, \sigma_1} \to \calA_1 \to \cSymd_{\calA_1, \sigma_1} \to 0$. Then we have
\[
\Omega( \cA, \si)=  \delta_1(1_{\cA_1}) \cup  1_{\cA_2}
\]
where $1_{\calA_2} \in \cSym_{\cA_2,\si_2}(S)=H^0(S, \cSym_{\calA_2, \sigma_2}) $ and the cup-product arises from the tensor morphism $\cSkew_{\calA_1, \sigma_1} \times  \cSym_{\calA_2, \sigma_2} \to \cSkew_{\calA, \sigma}$.
\end{lem}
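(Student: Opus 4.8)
The plan is to mimic the proof of Lemma \ref{tens_triple_invol_obs}\ref{obs2} essentially verbatim, replacing the role of ``locally quadratic'' by ``symplectic'' throughout. The key point is that the only feature of $(\cA_1,\si_1)$ used in that argument was the existence of the element $1_{\cA_1}\in\cSymd_{\cA_1,\si_1}(S)$ together with the boundary map attached to the exact sequence $0\to\cSkew_{\cA_1,\si_1}\to\cA_1\xrightarrow{1+\si_1}\cSymd_{\cA_1,\si_1}\to 0$; here Lemma \ref{lem_symp_1_symd} guarantees $1_{\cA_1}\in\cSymd_{\cA_1,\si_1}(S)$ because $\si_1$ is symplectic, and $\delta_1$ is the associated connecting morphism. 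Note that since $\si_1,\si_2$ are symplectic, their types are both $-1$, so $\si=\si_1\otimes\si_2$ has type $1$ and is orthogonal, hence $\cSkew_{\cA,\si}$, $\cSymd_{\cA,\si}$, $\cAlt_{\cA,\si}$ are the relevant modules and $\Om(\cA,\si)=\delta(1_{\cA})\in\check H^1(S,\cSkew_{\cA,\si})$ makes sense (using, as in Lemma \ref{tens_triple_invol_obs}\ref{obs1}/\ref{tens_symplectic}, that $(\cA,\si)$ is locally quadratic, which we need only to know $1_{\cA}\in\cSymd_{\cA,\si}(S)$ so that $\delta(1_{\cA})$ is defined).

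First I would observe that the map $\otimes 1_{\cA_2}\colon\cA_1\to\cA$, $a_1\mapsto a_1\otimes 1_{\cA_2}$, restricts to maps $\cSkew_{\cA_1,\si_1}\to\cSkew_{\cA,\si}$ and $\cSymd_{\cA_1,\si_1}\to\cSymd_{\cA,\si}$: for skew elements this is because $\si(a_1\otimes 1)=\si_1(a_1)\otimes\si_2(1)=\si_1(a_1)\otimes 1$, so $\si(a_1\otimes 1)=-(a_1\otimes 1)$ when $\si_1(a_1)=-a_1$; and $(1+\si)(a_1\otimes 1)=(a_1+\si_1(a_1))\otimes 1$, which is exactly $\big((1+\si_1)(a_1)\big)\otimes 1$, giving both the image statement and the commutativity of the square
\[
\begin{tikzcd}
0 \arrow{r} & \cSkew_{\cA_1,\si_1} \arrow{d}{\otimes 1_{\cA_2}} \arrow{r} & \cA_1 \arrow{d}{\otimes 1_{\cA_2}} \arrow{r}{1+\si_1} & \cSymd_{\cA_1,\si_1} \arrow{d}{\otimes 1_{\cA_2}} \arrow{r} & 0 \\
0 \arrow{r} & \cSkew_{\cA,\si} \arrow{r} & \cA \arrow{r}{1+\si} & \cSymd_{\cA,\si} \arrow{r} & 0
\end{tikzcd}
\]
with exact rows. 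Functoriality of the connecting morphism (Lemma \ref{app_connecting_image}) then yields a commutative square relating $\delta_1$ and $\delta$, with the right vertical arrow $\check H^1(S,\cSkew_{\cA_1,\si_1})\to\check H^1(S,\cSkew_{\cA,\si})$ being precisely $\cup\, 1_{\cA_2}$ for the tensor morphism $\cSkew_{\cA_1,\si_1}\times\cSym_{\cA_2,\si_2}\to\cSkew_{\cA,\si}$, since on cocycles $\otimes 1_{\cA_2}$ is cup-product with the global section $1_{\cA_2}\in\cSym_{\cA_2,\si_2}(S)$ (recall $1_{\cA_2}$ is symmetric for any involution). Chasing $1_{\cA_1}$ around this square, and using $1_{\cA}=1_{\cA_1}\otimes 1_{\cA_2}$, gives $\delta(1_{\cA})=\delta_1(1_{\cA_1})\cup 1_{\cA_2}$, i.e.\ $\Om(\cA,\si)=\delta_1(1_{\cA_1})\cup 1_{\cA_2}$.

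I expect the proof to be almost entirely formal, so there is no real ``hard part''; the only points requiring a word of care are (a) checking that $\si_1\otimes\si_2$ is orthogonal and that $(\cA,\si)$ is locally quadratic, so that all the objects in the statement are defined — this is handled by Lemma \ref{lem_symp_1_symd}, Proposition \ref{tens_symplectic}, and (for type of a tensor product) \cite[8.1.3(1)]{K} as recalled at the start of Section \ref{tens} — and (b) verifying that the induced map on $\check H^1$ coincides with the cup-product with $1_{\cA_2}$ under the stated tensor morphism, which amounts to noting that on a cocycle $(c_{ij})$ with values in $\cSkew_{\cA_1,\si_1}$ one has $(c_{ij}\otimes 1_{\cA_2})$ represents the cup product by the very definition of the pairing $\cSkew_{\cA_1,\si_1}\times\cSym_{\cA_2,\si_2}\to\cSkew_{\cA,\si}$. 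Everything else is identical to the proof of Lemma \ref{tens_triple_invol_obs}, so I would simply refer to that proof for the shared details rather than repeat them.
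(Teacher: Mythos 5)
Your argument is correct and is essentially the paper's: the paper proves this lemma by choosing local lifts $\ell_i$ of $1_{\cA_1}$ and observing, via Lemma \ref{lem_strong_rep}, that the \v{C}ech cocycle $\bigl((\ell_j\otimes 1)|_{T_{ij}}-(\ell_i\otimes 1)|_{T_{ij}}\bigr)$ simultaneously represents $\Strong(\cA,\si)$ and $\delta_1(1_{\cA_1})\cup 1_{\cA_2}$, which is exactly what your naturality-of-$\delta$ diagram chase (copied from Lemma \ref{tens_triple_invol_obs}) amounts to once unwound at the cocycle level. The preliminary checks you flag (orthogonality of $\si_1\otimes\si_2$ and $1_{\cA}\in\cSymd_{\cA,\si}(S)$ so that $\delta(1_{\cA})$ is defined) are handled the same way in the paper.
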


\begin{proof} Since $1_{\cA_1}\in \cSymd_{\cA_1,\sigma_1}(S)$, there is an fppf cover $\{T_i\to S\}_{i\in I}$ such that for each $i\in I$ there exists
$\ell_i\in \cA_1(T_i)$ with $\ell_i+\si_1(\ell_i)=1$. Then we will have
\[
(\ell_i\otimes 1)+(\si_1\otimes\si_2)(\ell_i\otimes 1)= 1_\cA,
\]
According to Lemma \ref{lem_strong_rep}, $\Omega(\cA,\si)$ is represented by the $1$--cocycle
\[
(\ell_i\otimes 1)|_{T_{ij}} - (\ell_j\otimes 1)|_{T_{ij}} \in \cSkew_{\cA,\si}(T_{ij})
\]
which also represents the class $\Omega( \cA_1, \si_1) \cup  1_{\cA_2}$.
\end{proof}

\section{Examples of Non-trivial Obstructions}\label{obs_examples}
The common point between the examples discussed in this section arises from a classical construction of torsors built from finite subgroups of algebraic groups. Let $k$ be a field, let $G$ be an affine $k$--algebraic group, and let $H \subset G$ be a finite $k$--subgroup (for example \'etale or constant). Given an $H$-torsor $f:Y \to X$ between $k$--schemes, we can consider its extension to $G$ defined by the contracted product $Y \wedge^H G$, which is a $G$--torsor. This kind of torsor occurs in the theory of essential dimension (Reichstein-Youssin \cite[\S 7] {RY}), in infinite dimensional Lie theory (Gille-Pianzola loop torsors \cite[\S 3]{GP}), and also in Brion's theory of homogeneous torsors over abelian varieties (\cite{Br1,Br2,Br3}, which extends that of Mukai in \cite{Mu} for vector bundles). In this last case we deal with an isogeny $f:Y \to X$ of abelian varieties such that $\ker(f)=H$, a special case of which is an isogeny of elliptic curves. More precisely, in the first example we use the multiplication by 2 map on an elliptic curve $E$ over a field $k$ of characteristic 2 and we embed its kernel $E[2]$ in $\PGL_2$, see the map \eqref{eq_embedding} later in the text.

\subsection{Non-trivial Strong Obstruction}\label{example_strong}
All concepts not defined below can be found in a standard textbook on elliptic curves, such as \cite{KM}. Let $k$ be a field of characteristic $2$ and let our base scheme be an ordinary elliptic curve $E$ over $k$. Since $E$ is ordinary, the $2$-torsion points are $E[2] \cong \bmu_2 \times_k \ZZ/2\ZZ$, where here $\ZZ/2\ZZ$ denotes the constant group scheme associated to the abstract group of two elements. We identify the Hopf $k$--algebra $H$ representing the group scheme $\bmu_2 \times_k \ZZ/2\ZZ$. It is
\begin{align*}
H &= k[\bmu_2]\otimes_k k[\ZZ/2\ZZ] \\
&= (k[x]/\langle x^2-1\rangle) \otimes_k (k\times k) \\
&\cong \left(k[x]/\langle x^2-1\rangle\right)\times\left(k[x]/\langle x^2-1\rangle\right).
\end{align*}
For a scheme $Y\in \Sch_E$, we have the standard fact that $(\bmu_2\times \ZZ/2\ZZ)(Y) = \Hom_\Rings(H,\cO(Y))$, for example by \cite[Tag 01I1]{Stacks}.

We set $E'=E$ and consider the fppf cover $\{E' \to E\}$ arising from multiplication by $2$. This is an fppf cover by \cite[2.3]{KM}. As a first step, we will identify a \v{C}ech $1$-cocycle with values in $\bmu_2\times_k \ZZ/2\ZZ$ over this cover. To do so, we identify the global sections of $E'\times_E E'$ and of $E'\times_E E'\times_E E'$, keeping in mind that $\cO(E')=k$.

Let $Y\in \Sch_k$ be any scheme. The set $E'(Y) = \Hom_k(Y,E')$ is a group, inheriting its group structure from $E'$. From the universal property of the fiber product, we have that
\[
(E'\times_E E')(Y) =\{ (a,b) \in E'(Y)\times E'(Y) \mid 2a=2b\}.
\]
Hence, for any pair $(a,b) \in (E'\times_E E')(Y)$ they differ by the element $b-a \in E'[2](Y) = (\bmu_2\times_k \ZZ/2\ZZ)(Y)$ in the kernel of the multiplication by $2$ map $E' \to E$. So, we may write
\begin{align*}
(E'\times_E E')(Y) &=\{ (a,a+s) \mid a\in E'(Y), s\in (\bmu_2\times_k \ZZ/2\ZZ)(Y)\} \\
&\cong E'(Y)\times (\bmu_2\times_k \ZZ/2\ZZ)(Y).
\end{align*}
Since this holds for all $Y$, we see that we have an isomorphism
\[
E'\times_E E' \cong E'\times_k (\bmu_2\times_k \ZZ/2\ZZ).
\]
To remove some notational clutter, we abuse both notation and the Yoneda lemma by omitting $Y$ and simply writing
\[
E'\times_E E' = \{(a,a+s) \mid a\in E', s\in \bmu_2\times_k \ZZ/2\ZZ\}
\]
and the isomorphism as
\begin{align*}
E'\times_E E' &\iso E'\times_k (\bmu_2\times_k \ZZ/2\ZZ) \\
(a,a+s) &\mapsto (a,s).
\end{align*}
We will continue using this abuse for computations throughout this section as well as in Section \ref{example_weak}. Due to the isomorphism above, we have
\[
\cO(E'\times_E E') = \cO(E'\times_k (\bmu_2\times_k \ZZ/2\ZZ)) = k\otimes_k H = H
\]
where the second equality follows from \cite[5.2.3, Cor. 2.27]{Liu}, which is about the flat base change of a scheme over a ring. Likewise, we may write
\[
E'\times_E E'\times_E E' = \{(a,a+s,a+(s+t)) \mid a\in E', s,t\in \bmu_2\times_k \ZZ/2\ZZ\}
\]
and so we have an isomorphism
\begin{align*}
E'\times_E E'\times_E E' &\iso E'\times_k (\bmu_2\times_k \ZZ/2\ZZ) \times_k (\bmu_2\times_k \ZZ/2\ZZ) \\
(a,a+s,a+(s+t)) &\mapsto (a,s,t).
\end{align*}
In turn, this means that
\[
\cO(E'\times_E E'\times_E E') = \cO(E'\times_k (\bmu_2\times_k \ZZ/2\ZZ) \times_k (\bmu_2\times_k \ZZ/2\ZZ)) = H\otimes_k H.
\]
Note that ``the $\bmu_2\times\ZZ/2\ZZ$ part" is all that occurs in the global sections. We now need to identify how the three projections, $p_{ij}\colon E'\times_E E'\times_E E' \to E'\times_E E'$ where the $i^\text{th}$ and $j^\text{th}$ factors are preserved, appear on global sections. We have
\begin{align*}
p_{12} \colon E'\times_E E'\times_E E' &\to E'\times_E E' \\
(a,a+s,a+(s+t)) &\mapsto (a,a+s),
\end{align*}
which alternatively appears as
\begin{align*}
p_{12} \colon E'\times_k (\bmu_2\times_k \ZZ/2\ZZ) \times_k (\bmu_2\times_k \ZZ/2\ZZ) &\to E'\times_k (\bmu_2\times_k \ZZ/2\ZZ) \\
(a,s,t) &\mapsto (a,s),
\end{align*}
after using the above isomorphisms. On the $\bmu_2\times \ZZ/2\ZZ$ part this is simply the first projection $(s,t)\mapsto s$, and hence on global sections we obtain the map
\[
\widetilde{p_{12}} = \Id\otimes 1 \colon H \to H\otimes_k H.
\]
When we consider $p_{23}$, it appears as $(a,a+s,a+(s+t)) \mapsto (a+s,a+(s+t))$, which alternatively becomes $(a,s,t)\mapsto (a+s,t)$. Thus, the $\bmu_2\times \ZZ/2\ZZ$ part is simply the second projection $(s,t)\mapsto t$ and we get that
\[
\widetilde{p_{23}} = 1\otimes\Id \colon H\to H\otimes_k H.
\]
Finally, applying the same procedure to $p_{13}$, we start with $(a,a+s,a+(s+t)) \mapsto (a,a+(s+t))$, which we rewrite as $(a,s,t) \mapsto (a,s+t)$, and then extract the $\bmu_2\times \ZZ/2\ZZ$ part, which is the addition map $(s,t)\mapsto s+t$. Therefore, we see that we obtain the comultiplication
\[
\widetilde{p_{13}} = \Delta \colon H \to H\otimes_k H
\]
of the Hopf algebra $H$.

Now, a \v{C}ech $1$-cocycle for $\bmu_2\times \ZZ/2\ZZ$ with respect to the cover $\{E'\to E\}$ consists of an element $y \in (\bmu_2\times \ZZ/2\ZZ)(E'\times_E E')$ such that
\[
y|_{p_{12}}\cdot y|_{p_{23}} = y|_{p_{13}},
\]
where $y|_{p_{ij}} = (\bmu_2\times \ZZ/2\ZZ)(p_{ij})(y)$ is the restriction along $p_{ij}$. Since the group $\bmu_2\times \ZZ/2\ZZ$ is represented by $\Spec(H)$, we have that
\begin{align*}
(\bmu_2\times \ZZ/2\ZZ)(E'\times_E E') &= \Hom_\Rings(H,\cO(E'\times_E E')) \\
&= \Hom_\Rings(H,H).
\end{align*}
We claim that $\Id \in \Hom_\Rings(H,H)$ is a $1$-cocycle. Indeed, for each projection we have that
\[
\Id|_{p_{ij}} = \widetilde{p_{ij}}\circ \Id = \widetilde{p_{ij}}
\]
as an element in $\Hom_\Rings(H,H\otimes_k H) = (\bmu_2\times \ZZ/2\ZZ)(E'\times_E E' \times_E E')$. Therefore, using the fact that the group structure of $\bmu_2\times \ZZ/2\ZZ$ comes from the Hopf algebra structure of $H$, the product $\widetilde{p_{12}}\cdot \widetilde{p_{23}}$ is the composition
\[
H \xrightarrow{\Delta} H\otimes_k H \xrightarrow{\widetilde{p_{12}}\otimes \widetilde{p_{23}}} (H\otimes_k H)\otimes_k (H\otimes_k H) \xrightarrow{\textrm{mult}} H\otimes_k H,
\]
which is easily checked to be
\[
\widetilde{p_{12}}\cdot \widetilde{p_{23}} = \Delta = \widetilde{p_{13}}
\]
and so $\Id \in \Hom_\Rings(H,H) = (\bmu_2\times \ZZ/2\ZZ)(E'\times_E E')$ is a $1$-cocycle.

Now, we will transport this cocycle to $\PGL_2$. We have an embedding of group schemes $i\co \bmu_2\times_k \ZZ/2\ZZ \hookrightarrow \PGL_2$ defined over $Y\in \Sch_E$ by
\begin{equation}\label{eq_embedding}
\varphi \mapsto \Inn\left(\begin{bmatrix} \varphi(0,1) & \varphi(1,0) \\ \varphi(x,0) & \varphi(0,x) \end{bmatrix}\right)
\end{equation}
for $\varphi \in \Hom_\Rings(H,\cO(Y))$. Intuitively, this is the map
\[
\varepsilon \mapsto \Inn\left(\begin{bmatrix} 1 & 0 \\ 0 & \varepsilon \end{bmatrix}\right) \text{ and } 1 \mapsto \Inn\left(\begin{bmatrix} 0 & 1 \\ 1 & 0 \end{bmatrix}\right)
\]
for $\varepsilon \in \bmu_2$ and $1\in \ZZ/2\ZZ$. The image $i(\Id) \in \PGL_2(E'\times_E E')$ is a $1$-cocycle since $\Id \in (\bmu_2\times_k \ZZ/2\ZZ)(E'\times_E E')$ is a $1$--cocycle. We denote $i(\Id)=\phi$ and of course have that
\[
\phi = \Inn\left(\begin{bmatrix} (0,1) & (1,0) \\ (x,0) & (0,x) \end{bmatrix}\right).
\]
The cocycle condition means that this $\phi$ is a gluing datum as in \cite[Tag 04TP]{Stacks}. Therefore, we may twist $\Mat_2(\cO)$ by gluing two copies of $\Mat_2(\cO)|_{E'}$ using $\phi$ as in \cite[Tag 04TR]{Stacks}, to obtain a quaternion $\cO$--algebra $\cQ$. For $T\in \Sch_E$, we have
\[
\cQ(T) = \{B\in \Mat_2(\cO(T\times_E E')) \mid \phi(B|_{T\times_E E'\times_E \times E'})= B|_{T\times_E E'\times_E \times E'}\}.
\]
Since $k$ has characteristic $2$, the canonical quaternion involution on $\Mat_2(\cO)$ is the split orthogonal involution in degree $2$ of Example \ref{split_example}\ref{split_example_b},
\[
\eta_0 \colon \begin{bmatrix} a & b \\ c & d \end{bmatrix} \mapsto \begin{bmatrix} d & b \\ c & a \end{bmatrix}.
\]
The canonical involution on $\cQ$, denoted by $\theta$, is the descent of $\eta_0$ and hence $(\cQ,\theta)$ is a quaternion algebra with orthogonal involution.
\sm

Let $n$ be a positive integer and now work over the abelian $k$--variety $S=E^n$.
We define the Azumaya algebra
\[
\calA = p_1^*(\calQ) \otimes_{\calO} \dots  \otimes_{\calO} p_n^*(\calQ)
\]
where $p_i \co E^n \to E$ is the $i^\text{th}$ projection and $p_i^*$ the pullback of quaternion algebras. It comes with the tensor product involution $\si=p_1^*(\theta) \otimes \cdots \otimes p_n^*(\theta)$, which is orthogonal. Alternatively, $\cA$ may be viewed as the following twist of $\Mat_{2^n}(\cO)$. Set $S'=S$ and view it as a scheme over $S$ with respect to the multiplication by $2$ map where each factor is multiplied by $2$. This makes $S'\to S$ a $(\bmu_2\times_k \ZZ/2\ZZ)^n$--torsor and $\{S'\to S\}$ an fppf cover. Since we are looking for a cocycle in $\PGL_{2^n}(S'\times_S S')$ we note that $\cO(S)=k$, and since $S'\times_S S' \cong (E'\times_E E')^n$ we have
\[
\cO(S'\times_S S') = \left( (k[x]/\langle x^2-1 \rangle)^2\right)^{\otimes n}.
\]
where the tensor product is over $k$. Using the decomposition
\[
\Mat_{2^n}(S'\times_S S') \cong \Mat_2((k[x]/\langle x^2-1\rangle)^2) \otimes_k \ldots\otimes_k \Mat_2((k[x]/\langle x^2-1\rangle)^2)
\]
we have $\phi\otimes\ldots\otimes\phi \in \PGL_{2^n}(S'\times_S S')$, and this is our desired cocycle. In this view, the tensor product involution on $\cA$ is the descent of the involution $\eta'=\eta_0\otimes\ldots\otimes \eta_0$ on $\Mat_{2^n}(\cO(S'))\cong (\Mat_2(\cO(S')))^{\otimes n}$, where $\cO(S')=k$.

\begin{lem} \label{lem_mukai}
Consider the Azumaya algebra with involution $(\cA,\si)$ defined above.
\begin{enumerate}[label={\rm (\roman*)}]

\item \label{lem_mukai1} $(\calA, \sigma)$ can be extended to a quadratic triple. In particular, $(\cA,\si)$ is locally quadratic with trivial weak obstruction.

\item \label{lem_mukai2}  $\calA(S)\cong k$.

\item \label{lem_mukai3}  The strong obstruction $\Strong(\calA, \sigma)$ is non-trivial.
\end{enumerate}

\end{lem}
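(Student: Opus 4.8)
The plan is to establish the three parts in turn, with \ref{lem_mukai1} carrying the real content. For \ref{lem_mukai1}, I would argue that $(\cA,\si)$ is obtained by fppf descent of the \emph{split} quadratic triple: the key point is that the group embedding $i\colon \bmu_2\times_k\ZZ/2\ZZ\hookrightarrow\PGL_2$ of the construction actually factors through the automorphism group scheme of the split quadratic triple $(\Mat_2(\cO),\eta_0,f_0)$ in degree $2$ from Example \ref{split_example}\ref{split_example_b}, namely $\bPGO(q_0)$. This is a short verification on the two families of generators $(\veps,0)\mapsto\diag(1,\veps)$ and $(1,1)\mapsto\left(\begin{smallmatrix}0&1\\1&0\end{smallmatrix}\right)$, using that $\cSym_{\Mat_2(\cO),\eta_0}$ is spanned by $1,E_{12},E_{21}$ with $f_0(1)=1$ and $f_0(E_{12})=f_0(E_{21})=0$ (the values computed after \cite[2.7.0.31]{CF}): both generators act on $\{1,E_{12},E_{21}\}$ by fixing $1$ and rescaling or permuting $E_{12},E_{21}$, hence preserve $f_0$ as well as $\eta_0$. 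Consequently the gluing datum $\phi=\Inn(P)$ lies in $\bPGO(q_0)(E'\times_E E')$, and since the construction of Proposition \ref{tens_triple_invol} is functorial in automorphisms of its first factor (using Lemma \ref{lem_tensor_Symdecomp}\ref{lem_tensor_Symdecomp_i} to reduce to $\cSymd$ and pure tensors of symmetric elements), the $n$--fold datum $\phi\otimes\cdots\otimes\phi$ preserves the iterated semi-trace on $(\Mat_{2^n}(\cO),\eta_0^{\otimes n})$. As quadratic triples satisfy fppf descent (cf.\ Corollary \ref{cor_split_locally}) and the cocycle condition on $\phi^{\otimes n}$ is already recorded in the construction, $(\Mat_{2^n}(\cO)|_{S'},\eta_0^{\otimes n},\text{iterated }f_0)$ together with $\phi^{\otimes n}$ descends to a quadratic triple $(\cA,\si,f)$ on $S$. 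Then $\si$ is locally quadratic by Lemma \ref{lem_1_in_Symd}, and $\Weak(\cA,\si)=0$ by Theorem \ref{prop_obstructions}\ref{prop_obstructions_ii}.

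For \ref{lem_mukai2}, since $S=E^n$ is proper and geometrically integral over $k$ we have $\cO(S)=k$, and likewise $\cO(S')=k$. I would compute $\cA(S)$ by descent along $\{S'\to S\}$: it is the set of $B\in\Mat_{2^n}(\cO(S'))=\Mat_{2^n}(k)$ commuting with $\phi^{\otimes n}$ inside $\Mat_{2^n}(\cO(S'\times_S S'))$. For $n=1$, writing $\cO(E'\times_E E')=\bigl(k[x]/\langle x^2-1\rangle\bigr)^2=\bigl(k[\epsilon]/\langle\epsilon^2\rangle\bigr)^2$ with $x=1+\epsilon$, and $\phi=\bigl(\Inn\left(\begin{smallmatrix}0&1\\x&0\end{smallmatrix}\right),\Inn\left(\begin{smallmatrix}1&0\\0&x\end{smallmatrix}\right)\bigr)$, a matrix $B\in\Mat_2(k)$ commuting with the second component $1+\epsilon E_{22}$ must be diagonal, and commuting with the first component then forces $B$ scalar; so $\cQ(E)=k\cdot 1$. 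The identical computation over $\bigl((k[x]/\langle x^2-1\rangle)^2\bigr)^{\otimes n}$ — or, equivalently, the Künneth isomorphism $\cA(E^n)\cong\cQ(E)^{\otimes n}$ — gives $\cA(S)=k\cdot 1_{\cA}\cong k$.

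Part \ref{lem_mukai3} is then immediate from Lemma \ref{lem_nontrivial_strong}: $\cA$ is an Azumaya algebra of positive rank $4^n$ carrying a locally quadratic orthogonal involution (all involutions over the characteristic-$2$ base $E^n$ being orthogonal), $\cA(S)=k=\cO(S)$ by \ref{lem_mukai2}, and $2\cO(S)=2k=0$; hence $\Strong(\cA,\si)\neq 0$.

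The step I expect to be the main obstacle is \ref{lem_mukai1} — specifically, making rigorous that the descent datum preserves the semi-trace, which splits into the two ingredients that $i$ factors through $\bPGO(q_0)$ and that the tensor-product construction of Proposition \ref{tens_triple_invol} is compatible with the decomposition $\Mat_{2^n}=\Mat_2^{\otimes n}$ used to build $\phi^{\otimes n}$. Once \ref{lem_mukai1} is in place, \ref{lem_mukai2} is a concrete linear-algebra computation with the descent datum and \ref{lem_mukai3} is a direct application of an earlier lemma.
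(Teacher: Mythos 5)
Your argument is correct, but it takes a genuinely different route from the paper in parts (i) and (ii). For part (i) the paper never passes through the automorphism group of the split quadratic triple: it writes the semi-trace down explicitly as $f'=\Trd_{\Mat_{2^n}(\cO)}(\ell\,\und)$ with $\ell=E_{11}\otimes I_{2^{n-1}}$ and verifies by one matrix computation that $(\phi\otimes\cdots\otimes\phi)(\ell)-\ell$ lies in $\cAlt_{\Mat_{2^n}(\cO),\eta'}(S'\times_S S')$, so that $f'|_{S'}$ glues. Your alternative -- checking that the embedding $i$ lands in $\bPGO(q_0)$ (your generator computation with $f_0(1)=1$, $f_0(E_{12})=f_0(E_{21})=0$ is right) and then invoking the uniqueness clause of Proposition \ref{tens_triple_invol} to see that $\phi^{\otimes n}$ preserves the iterated semi-trace -- is valid and more structural: it shows at once that \emph{any} cocycle valued in $\bPGO$ of the split triple produces a quadratic triple. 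The price is that you must invoke effective fppf descent for quadratic triples; this is true (all the data are quasi-coherent) and is exactly what the paper's explicit gluing establishes in this instance, but Corollary \ref{cor_split_locally} is not the right citation for it -- that corollary says triples are split locally, not that descent is effective. For part (ii) the paper carries out the fixed-point computation for all $n$ by an explicit induction on tensor factors, whereas you do only $n=1$ by hand and then appeal to the K\"unneth isomorphism $\cA(E^n)\cong\cQ(E)^{\otimes n}$; since the $p_i^*(\cQ)$ are locally free and $E^n$ is proper over $k$, this is a legitimate and cleaner shortcut. Part (iii) coincides with the paper's proof: apply Lemma \ref{lem_nontrivial_strong} using $\cA(S)=\cO(S)=k$ and $2k=0$.
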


\begin{proof}
Throughout the proof we will use the following computation. For a matrix of the form
\[
\begin{bmatrix}(b_1,b_1) & (b_2,b_2) \\ (b_3,b_3) & (b_4,b_4) \end{bmatrix} \in \Mat_2((k[x]/\langle x^2-1\rangle)^2)
\]
with $b_i \in k$, we have
\[
\phi\left(\begin{bmatrix}(b_1,b_1) & (b_2,b_2) \\ (b_3,b_3) & (b_4,b_4) \end{bmatrix}\right) = \begin{bmatrix} (b_4,b_1) & (b_3x,b_2x) \\ (b_2x,b_3x) & (b_1,b_4)\end{bmatrix}.
\]

\noindent \ref{lem_mukai1}: We begin by considering the linear form
\begin{align*}
f' \colon \cSym_{\Mat_{2^n}(\cO),\si_{2^n}} &\to \cO \\
B &\mapsto \Trd_{\Mat_{2^n}(\cO)}\left( \left(\begin{bmatrix} 1 & 0 \\ 0 & 0 \end{bmatrix}\otimes I_{2^{n-1}}\right)\cdot B \right)
\end{align*}
where $I_{2^{n-1}}$ denotes the $2^{n-1} \times 2^{n-1}$ identity matrix. This is an instance of the construction of Example \ref{quafoex}\ref{quafoex_b}, and so $(\Mat_{2^n}(\cO),\si_{2^n},f')$ is a quadratic triple. The form $f'|_{S'}$ will descend to a suitable linear form on $\cSym_{\cA,\si}$ if $f'|_{S'\times_S S'}\circ (\phi\otimes\ldots\otimes\phi) = f'|_{S'\times_S S'}$. We verify this with the following computation, which uses the fact that $\phi^2=\Id$.
\begin{align*}
&\Trd_{\Mat_{2^n}(\cO)}\left(\left(\begin{bmatrix} (1,1) & 0 \\ 0 & 0 \end{bmatrix}\otimes I_{2^{n-1}}\right)\cdot (\phi\otimes\ldots\otimes\phi)(\und)\right) \\
= &\Trd_{\Mat_{2^n}(\cO)}\left((\phi\otimes\ldots\otimes\phi)\left(\begin{bmatrix} (1,1) & 0 \\ 0 & 0 \end{bmatrix}\otimes I_{2^{n-1}}\right)\cdot\und\right)\\
= &\Trd_{\Mat_{2^n}(\cO)}\left(\left(\begin{bmatrix} (0,1) & 0 \\ 0 & (1,0) \end{bmatrix}\otimes I_{2^{n-1}}\right)\cdot\und\right).
\end{align*}
Then, because
\[
\left(\begin{bmatrix} (1,1) & 0 \\ 0 & 0 \end{bmatrix}\otimes I_{2^{n-1}}\right) - \left(\begin{bmatrix} (0,1) & 0 \\ 0 & (1,0) \end{bmatrix}\otimes I_{2^{n-1}}\right) = \begin{bmatrix} (1,0) & 0 \\ 0 & (1,0) \end{bmatrix}\otimes I_{2^{n-1}}
\]
which is an element of $\cAlt_{\Mat_{2^n}(\cO),\eta'}(S'\times_S S')$, we know by Example \ref{quafoex}\ref{quafoex_b} that
\[
f'|_{S'\times_S S'}\circ (\phi\otimes\ldots\otimes\phi)=f'|_{S'\times_S S'}.
\]
Therefore $f'|_{S'}$ descends, and there exists a linear form $f\colon \cSym_{\cA,\si}\to\cO$ such that $(\cA,\si,f)$ is a quadratic triple.
\sm

\noindent \ref{lem_mukai2}: By construction we have that
\[
\cA(S) = \{B\in \Mat_{2^n}(\cO(S')) \mid (\phi\otimes\ldots\otimes\phi)(B|_{S'\times_S S'})=B|_{S'\times_S S'}\}
\]
where $\cO(S')=k$ and so we work with $B\in \Mat_{2^n}(k)$ below. Given $B\in \cA(S)$, we may write $B$ uniquely as
\[
B = \begin{bmatrix} 1 & 0 \\ 0 & 0 \end{bmatrix}\otimes B_1 + \begin{bmatrix} 0 & 1 \\ 0 & 0 \end{bmatrix}\otimes B_2 + \begin{bmatrix} 0 & 0 \\ 1 & 0 \end{bmatrix}\otimes B_3 + \begin{bmatrix} 0 & 0 \\ 0 & 1 \end{bmatrix}\otimes B_4
\]
with $B_i \in \Mat_{2^{n-1}}(k)$, and then setting $B|_{S'\times_S S'}=\overline{B}$ we have
\begin{align*}
\overline{B} = &\begin{bmatrix} (1,1) & 0 \\ 0 & 0 \end{bmatrix}\otimes \overline{B_1} + \begin{bmatrix} 0 & (1,1) \\ 0 & 0 \end{bmatrix}\otimes \overline{B_2} \\
+&\begin{bmatrix} 0 & 0 \\ (1,1) & 0 \end{bmatrix}\otimes \overline{B_3} + \begin{bmatrix} 0 & 0 \\ 0 & (1,1) \end{bmatrix}\otimes \overline{B_4}.
\end{align*}
Now we apply $\phi\otimes\ldots\otimes\phi$, setting $\overline{\phi}=\phi^{\otimes n-1}$.
\begin{align*}
(\phi\otimes\ldots\otimes\phi)(\overline{B}) = &\begin{bmatrix} (0,1) & 0 \\ 0 & (1,0) \end{bmatrix}\otimes \overline{\phi}(\overline{B_1}) + \begin{bmatrix} 0 & (0,x) \\ (x,0) & 0 \end{bmatrix}\otimes \overline{\phi}(\overline{B_2}) \\
+&\begin{bmatrix} 0 & (x,0) \\ (0,x) & 0 \end{bmatrix}\otimes \overline{\phi}(\overline{B_3}) + \begin{bmatrix} (1,0) & 0 \\ 0 & (0,1) \end{bmatrix}\otimes \overline{\phi}(\overline{B_4}).
\end{align*}
Since this is equal to $\overline{B}$, linear independence then requires that
\begin{align*}
&\overline{B_1} = \overline{\phi}(\overline{B_4}) = \overline{B_4} = \overline{\phi}(\overline{B_1}), \text{ and} \\
&\overline{B_2} = \overline{B_3} = \overline{\phi}(\overline{B_2}) = \overline{\phi}(\overline{B_3}) = 0
\end{align*}
and so we can conclude that $\overline{B} = I_2\otimes \overline{B'}$ for some $B'\in \Mat_{2^{n-1}}(k)$ such that $(\phi\otimes\ldots\otimes\phi)(\overline{B'})=\overline{B'}$, now with only $n-1$ tensor factors. Hence, by induction we need only address the case of $2\times 2$ matrices. There we have that
\[
\phi(\overline{B})=\begin{bmatrix} (b_4,b_1) & (b_3x,b_2x) \\ (b_2x,b_3x) & (b_1,b_4)\end{bmatrix} = \begin{bmatrix}(b_1,b_1) & (b_2,b_2) \\ (b_3,b_3) & (b_4,b_4) \end{bmatrix} = \overline{B}.
\]
Since $b_i\in k$, this can only happen when $B=aI_2$ for some $a\in k$. Therefore, overall
\[
\cA(S) = \{I_{2^{n-1}}\otimes aI_2 \mid a\in k\} \cong k.
\]

\noindent \ref{lem_mukai3}: Part \ref{lem_mukai2} above shows that $\cA(S)=\cO(S)\cong k$, and since $k$ is characteristic $2$ we also have that $2\cO(S)=0$. Therefore applying Lemma \ref{lem_nontrivial_strong} yields that $\Strong(\cA,\sigma)\neq 0$ as claimed.
\end{proof}

\begin{remark}\label{rem_symplectic_obs}
Note that for $n=2$, $(\calA, \si)$ is a tensor product of two Azumaya algebras with symplectic involutions, and so Corollary \ref{cor_symplectic_ext} provides another proof that the weak obstruction is zero in Lemma \ref{lem_mukai}\ref{lem_mukai3}. However, the important point is that the strong obstruction does not vanish.
\end{remark}

\subsection{Non-trivial Weak Obstruction}\label{example_weak}
We continue working over an algebraically closed base field $k$ of characteristic $2$. According to a result by Serre \cite[prop.~15]{Se},
for each finite group $\Gamma$ there exists a Galois $\Gamma$--cover $Y \to S$ as in \cite[Tag 03SF]{Stacks} such that $S$ and $Y$ are connected smooth projective $k$--varieties. We use Serre's results with the group $\Gamma= \PGL_2(\FF_4)$ to obtain a $\Gamma$--cover $\pi \colon Y \to S$. We take $S$ to be our base scheme. We denote by $\Gamma_k$ the constant group scheme associated to $\Gamma$. It is affine and represented by $k^{|\Gamma|}$ with componentwise multiplication. We write this algebra as
\[
k^{|\Gamma|} = \{(c_g)_{g\in \Gamma} \mid c_g \in k\}.
\]
The map $Y\to S$ is then a $\Gamma_k$--torsor and $\{Y\to S\}$ is an fppf cover. Since $\Gamma_k$ embeds in $\PGL_2$, which we view as a group scheme over $k$, we can define the $\PGL_2$--torsor $P=Y \wedge^{\Gamma_k} \PGL_2$ over $S$. The twist of $\Mat_2(\calO)$ by $P$ is a quaternion $\calO$--algebra $\calQ$, which of course is not the same $\cQ$ as in section \ref{example_strong}. Here as well we may describe $\cQ$ explicitly using cocycles. As before, we first describe the global sections of $Y\times_S Y$ and $Y\times_S Y \times_S Y$. We know that $\cO(Y)\cong k$, and since $Y\to S$ is a Galois extension with Galois group $\Gamma$, we have isomorphisms
\begin{align*}
Y\times_S Y &\iso Y\times_k \Gamma_k \\
(y,yg) &\mapsto (y,g)
\end{align*}
and
\begin{align*}
Y\times_S Y \times_S Y &\iso Y\times_k \Gamma_k \times_k \Gamma_k \\
(y,yg,yh) &\mapsto (y,g,h)
\end{align*}
which we use to identify
\begin{align*}
\cO(Y \times_S Y) &\cong k^{|\Gamma|} \text{, and} \\
\cO(Y\times_S Y \times_S Y) &\cong k^{|\Gamma|}\otimes_k k^{|\Gamma|} \cong k^{|\Gamma\times\Gamma|}
\end{align*}
where we write $(d_{g,h})_{g,h\in \Gamma}$ for an element in $k^{|\Gamma\times\Gamma|}$. As in Example \ref{example_strong}, the Hopf algebra structure on $k^{|\Gamma|}$ representing $\Gamma_k$ plays a roll in describing the three restriction maps $\widetilde{p_{ij}} \colon \cO(Y\times_S Y) \to \cO(Y\times_S Y \times_S Y)$ as maps $k^{|\Gamma|} \to k^{|\Gamma\times\Gamma|}$. The canonical projection $p_{12} \colon Y\times_S Y \times_S Y \to Y\times_S Y$ can be written as
\[
(y,yg,yh) \mapsto (y,g)
\]
which corresponds to the first projection $\Gamma_k \times_k \Gamma_k \to \Gamma_k$ sending $(g,h) \mapsto g$. Therefore, on global sections, this is represented by the algebra map
\begin{align*}
\widetilde{p_{12}} = \Id\otimes 1 \colon k^{|\Gamma|} &\to k^{|\Gamma|}\otimes_k k^{|\Gamma|} \cong k^{|\Gamma\times\Gamma|} \\
(c_g)_{g\in \Gamma} &\mapsto (c_g)_{g\in \Gamma}\otimes (1)_{h\in \Gamma} \cong (c_g)_{g,h\in \Gamma}
\end{align*}
since $1=(1)_{h\in H}$ in the second factor $k^{|\Gamma|}$. Here $(c_g)_{g,h\in \Gamma}$ is the element $(d_{g,h})_{g,h\in \Gamma} \in k^{|\Gamma\times\Gamma|}$ with $d_{g,h}=c_g$. Similarly, the projection $p_{13}$ appears as $(y,yg,yh)\mapsto (y,yh)$ which corresponds to the second projection $\Gamma_k \times_k \Gamma_k \to \Gamma_k$, and thus
\begin{align*}
\widetilde{p_{13}} = \Id\otimes 1 \colon k^{|\Gamma|} &\to k^{|\Gamma|}\otimes_k k^{|\Gamma|} \cong k^{|\Gamma\times\Gamma|} \\
(c_g)_{g\in \Gamma} &\mapsto (1)_{g\in \Gamma}\otimes (c_h)_{h\in \Gamma} \cong (c_h)_{g,h\in \Gamma}.
\end{align*}
The remaining projection, $p_{23}$ sends $(y,yg,yh)\mapsto (yg,yh)$, which after applying the isomorphisms above becomes
\[
(y,g,h) \mapsto (yg,g^{-1}h)
\]
and thus it corresponds to the map $\Gamma_k \times_k \Gamma_k \to \Gamma_k$ which sends $(g,h)\mapsto g^{-1}h$. This is represented by the algebra map
\begin{align*}
\widetilde{p_{23}} = (i\otimes \Id)\circ \Delta \colon k^{|\Gamma|} &\to k^{|\Gamma\times\Gamma|} \\
(c_g)_{g\in \Gamma} &\mapsto  (c_{g^{-1}h})_{g,h\in \Gamma}.
\end{align*}
where $i \colon (c_g)_{g\in \Gamma} \mapsto (c_{g^{-1}})_{g\in \Gamma}$ is the antipode and
\[
\Delta \colon (c_g)_{g\in \Gamma} \mapsto (c_{gh})_{g,h \in \Gamma}
\]
is the comultiplication of the Hopf algebra $k^{|\Gamma|}$ representing $\Gamma_k$.

Now, we search for a $1$--cocycle in $\PGL_2(Y\times_S Y)$. We have that
\[
\PGL_2(Y\times_S Y)=\PGL_2(k^{|\Gamma|}) \cong \PGL_2(k)^{|\Gamma|}
\]
and likewise $\PGL_2(Y\times_S Y\times_S Y) \cong \PGL_2(k)^{|\Gamma\times\Gamma|}$. We reuse the notation $\widetilde{p_{ij}} \colon \PGL_2(Y\times_S Y) \to \PGL_2(Y\times_S Y\times_S Y)$ for the restriction maps since they appear similar to the ones above, namely
\begin{align*}
\widetilde{p_{12}} \colon (\varphi_g)_{g\in \Gamma} &\mapsto (\varphi_g)_{g,h\in \Gamma}\\
\widetilde{p_{13}} \colon (\varphi_g)_{g\in \Gamma} &\mapsto (\varphi_h)_{g,h\in \Gamma} \\
\widetilde{p_{23}} \colon (\varphi_g)_{g\in \Gamma} &\mapsto (\varphi_{g^{-1}h})_{g,h\in \Gamma}
\end{align*}
where now each $\varphi_g \in \PGL_2(k)$. Identifying $\Gamma_k$ with its embedding in $\PGL_2$, we have the element
\[
(g)_{g\in \Gamma} \in \PGL_2(Y\times_S Y)
\]
which we claim is a $1$--cocycle. Indeed,
\begin{align*}
\widetilde{p_{12}}((g)_{g\in \Gamma})\cdot \widetilde{p_{23}}((g)_{g\in \Gamma}) &= (g)_{g,h\in \Gamma} \cdot (g^{-1}h)_{g,h\in \Gamma} \\
&= (gg^{-1}h)_{g,h\in \Gamma} \\
&= (h)_{g,h\in \Gamma} \\
&= \widetilde{p_{13}}((g)_{g\in \Gamma}).
\end{align*}
Thus, again by \cite[Tag 04TR]{Stacks} the quaternion  $\cO$--algebra $\cQ$ is described over $T\in \Sch_S$ by
\[
\cQ(T) = \{ B \in \Mat_2(\cO(T\times_S Y)) \mid g(B)=B, \; \forall\, g\in \Gamma\}.
\]
Or, more concisely, $\cQ(T) = \Mat_2(\cO(T\times_S Y))^\Gamma$ are the fixed points. The canonical involution $\eta_0$ on $\Mat_2(\cO)$ of Example \ref{split_example}\ref{split_example_b} is orthogonal and descends to an orthogonal involution $\theta$ on $\cQ$, which is the canonical involution on $\cQ$.

\begin{lem} \label{lem_serre} $(\calQ, \theta)$ is locally quadratic, but cannot be extended to a quadratic triple. Thus, $\Weak(\calQ, \theta) \not = 0 \in
\check{H}^1( S, \cSkew_{\calA, \sigma}/ \cAlt_{\calA, \sigma})$, i.e., the weak obstruction is non-trivial.
\end{lem}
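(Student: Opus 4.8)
The plan is to reduce everything to a computation of global sections by Galois descent along $\pi\colon Y\to S$. First I record the simplifications coming from $\Char k=2$: since $\Id-\theta=\Id+\theta$ we have $\cAlt_{\cQ,\theta}=\cSymd_{\cQ,\theta}$, and checking over the cover $\{Y\to S\}$, on which $(\cQ,\theta)$ becomes $(\Mat_2(\cO),\eta_0)$, one computes $\cSymd_{\cQ,\theta}=\cO\cdot 1_{\cQ}$ (locally a direct summand, free of rank one), $\cSkew_{\cQ,\theta}=\cSym_{\cQ,\theta}=\ker(\Trd_{\cQ}\colon\cQ\to\cO)$, and $(\Id+\theta)(x)=\Trd_{\cQ}(x)\,1_{\cQ}$. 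In particular $1_{\cQ}\in\cSymd_{\cQ,\theta}(S)$, so $(\cQ,\theta)$ is locally quadratic (equivalently $E_{11}\in\Mat_2(\cO(Y))$ satisfies $E_{11}+\eta_0(E_{11})=1$, so Lemma \ref{lem_smart_equiv}\ref{lem_smart_equiv_iv} applies), and after identifying $\cSymd_{\cQ,\theta}\cong\cO$ via $1_{\cQ}\mapsto 1$ the map $\xi\colon\cQ/\cAlt_{\cQ,\theta}\to\cSymd_{\cQ,\theta}$ of \eqref{eq_big_diagram} becomes the reduced trace $\cQ/\cO\,1_{\cQ}\to\cO$. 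By exactness of the bottom row of \eqref{eq_long_exacts} (equivalently by Theorem \ref{prop_obstructions}\ref{prop_obstructions_ii}), we get $\Weak(\cQ,\theta)=\delta'(1_{\cQ})\neq 0$ as soon as $1_{\cQ}\notin\Ima(\xi(S))$, and for this it suffices to show $(\cQ/\cO\,1_{\cQ})(S)=0$.

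Next I would carry out the descent computation. Since $S$ and $Y$ are connected smooth projective over the algebraically closed field $k$, we have $\cO(S)=\cO(Y)=k$; and since $\{Y\to S\}$ is a $\Gamma_k$--torsor with $Y\times_S Y\cong Y\times_k\Gamma_k$, the sheaf axiom gives $\cF(S)=\cF(Y)^{\Gamma}$ for every fppf sheaf $\cF$ on $\Sch_S$, where $\Gamma$ acts through the descent datum. Applied to $\cQ$, whose descent datum is the cocycle $(g)_{g\in\Gamma}$, this action on $\cQ(Y)=\Mat_2(k)$ is conjugation through $\Gamma=\PGL_2(\FF_4)\hookrightarrow\PGL_2(k)$ (the Galois action on the constants $\cO(Y)=k$ being trivial), so $\cQ(S)=\Mat_2(k)^{\Gamma}$; likewise, since $\cO\,1_{\cQ}$ is locally a direct summand of $\cQ$, one gets $(\cQ/\cO\,1_{\cQ})(S)=\big(\Mat_2(k)/k\,I_2\big)^{\Gamma}=\mathfrak{pgl}_2^{\Gamma}$ with the conjugation action. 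Because $\PGL_2(\FF_4)=\SL_2(\FF_4)$ acts absolutely irreducibly on $k^{2}$, Schur's Lemma gives $\Mat_2(k)^{\Gamma}=k\,I_2$; feeding this into the $\Gamma$--invariants of $0\to k\,I_2\to\Mat_2(k)\to\mathfrak{pgl}_2\to 0$ shows that $\mathfrak{pgl}_2^{\Gamma}$ embeds into $H^1(\Gamma,k\,I_2)=\Hom(\Gamma,(k,+))$, which vanishes since $\Gamma\cong\mathrm{A}_5$ is perfect. Hence $(\cQ/\cO\,1_{\cQ})(S)=0$, so $\Ima(\xi(S))=0$ while $1_{\cQ}$ is a nonzero element of $\cSymd_{\cQ,\theta}(S)=k\,1_{\cQ}$; therefore $\Weak(\cQ,\theta)\neq 0$.

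The main obstacle is getting the $\Gamma$--module structures on $\cQ(Y)$ and on $\mathfrak{pgl}_2$ exactly right from the explicit cocycle of the construction, together with the two little representation--theoretic inputs: absolute irreducibility of the natural module of $\SL_2(\FF_4)$ (yielding $\Mat_2(k)^{\Gamma}=k\,I_2$) and perfectness of $\Gamma$ (yielding $\Hom(\Gamma,(k,+))=0$). The latter is precisely where the field $\FF_4$ must be used instead of $\FF_2$ or $\FF_3$, since $\PGL_2(\FF_q)$ is perfect only for $q\geq 4$. The remaining points -- the characteristic--$2$ identifications of the $\cO$--modules attached to $(\cQ,\theta)$, the fact that $\cO\,1_{\cQ}$ is locally a direct summand, and the extraction of $\Weak(\cQ,\theta)$ from \eqref{eq_long_exacts} -- are routine.
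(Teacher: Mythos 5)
Your proof is correct, and it reaches the conclusion by a genuinely different computation than the paper's. Both arguments rest on the same descent principle -- the sheaf axiom for the Galois cover $\{Y\to S\}$ identifies sections over $S$ with $\Gamma$--invariants of sections over $Y$, with $\Gamma=\PGL_2(\FF_4)$ acting by conjugation through the cocycle -- but you apply it to the ``primal'' object $(\cQ/\cAlt_{\cQ,\theta})(S)$: after the characteristic--$2$ identification $\cAlt_{\cQ,\theta}=\cSymd_{\cQ,\theta}=\cO\cdot 1_{\cQ}$ you show this group vanishes via Schur's lemma ($\Mat_2(k)^{\Gamma}=k I_2$ since the natural module of $\SL_2(\FF_4)\cong\PGL_2(\FF_4)$ is absolutely irreducible) together with $H^1(\Gamma,k)=\Hom(A_5,(k,+))=0$, and then exactness of the bottom row of \eqref{eq_long_exacts} gives $1_{\cQ}\notin\Ima(\xi(S))$, i.e.\ $\Weak(\cQ,\theta)\neq 0$. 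The paper instead works with the dual object: it assumes a quadratic pair $f$ exists, deduces from the equalizer diagram that the induced linear form $f(Y)$ on $\Symm(\Mat_2(k),\eta_0)$ is $\Gamma$--equivariant, and kills it by evaluating against three explicit elements of $\PGL_2(\FF_4)$ (the last being $\diag(\lambda^{-1},1)$ with $\lambda\in\FF_4\setminus\FF_2$, which is where $\FF_4$ enters), contradicting Remark \ref{rem_f_non_zero}. Your version makes the group-theoretic input conceptual and correctly isolates why $q\geq 4$ is needed (perfectness of $\PGL_2(\FF_q)$); the paper's version is more elementary, avoiding group cohomology and any identification of the modules $\cAlt_{\cQ,\theta}$ and $\cSymd_{\cQ,\theta}$. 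The one step in your argument that deserves to be spelled out carefully is the equality $(\cQ/\cO\,1_{\cQ})(Y)=\Mat_2(k)/kI_2$, which uses that $\cO\cdot 1_{\cQ}$ is a direct summand of $\cQ|_Y$ so that the presheaf quotient is already a sheaf over $\Sch_Y$; you note this, and it is indeed all that is needed.
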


\begin{proof}
Since $(\cQ,\theta)$ is a twisted form of $\Mat_2(\cO)$ which splits over $\{Y\to S\}$, we have that
\[
(\calQ, \theta)|_Y \cong (\Mat_2(\cO),\eta_0)|_Y.
\]
Since we have the split quadratic triple $(\Mat_2(\cO), \eta_0, f_0)|_Y$ of Example \ref{split_example}\ref{split_example_b} and $\{Y\to S\}$ is an fppf cover, we obtain that $(\calQ, \theta)$ is locally quadratic by Lemma \ref{lem_smart_equiv}\ref{lem_smart_equiv_iv}.

Now, assume that we can extend $(\calQ, \theta)$ to a quadratic triple $(\calQ, \theta,f)$ over $S$. The linear map $f$ then fits into the following diagram where the rows are exact sequences coming from the sheaf equalizer diagrams for $\cSym_{\cQ,\theta}$ and $\cO$ respectively and which commutes since $f$ is a morphism of sheaves.
\[
\begin{tikzcd}
\cSym_{\cQ,\theta}(S) \arrow{r} \arrow{d}{f(S)} & \cSym_{\Mat_2(\cO),\eta_0}(Y)^2 \arrow{r}{\pi_1} \arrow{d}{f(Y)\times f(Y)} & \cSym_{\Mat_2(\cO),\eta_0}(Y\times_S Y) \arrow{d}{f(Y\times_S Y)} \\
k \arrow{r} & k\times k \arrow{r}{\pi_2} & k^{|\Gamma|}
\end{tikzcd}
\]
where $\pi_1(B_1,B_2) = (g(B_1)-B_2)_{g\in \Gamma}$ and $\pi_2(c_1,c_2)=(c_1-c_2)_{g\in \Gamma}$. Commutativity of the diagram then enforces that
\[
f(Y\times_S Y)\left((g(B_1)-B_2)_{g\in \Gamma}\right) = (f(Y)(B_1)-f(Y)(B_2))_{g\in \Gamma}
\]
which is equivalent to
\[
(f(Y)(g(B_1)))_{g\in \Gamma}=(f(Y)(B_1))_{g\in \Gamma}
\]
and hence $f(Y)$ must be $\Gamma$--equivariant. We now argue that this means $f(Y)$ must be zero. Certainly, $f(Y)$ is of the form
\[
f\left(\begin{bmatrix} a & b \\ c & a \end{bmatrix}\right) = au + bv + cw
\]
for some $u,v,w\in k=\cO(Y)$. By $\Gamma$--equivariance we obtain
\[
w = f\left(\begin{bmatrix} 0 & 0 \\ 1 & 0 \end{bmatrix}\right)=f\left(\begin{bmatrix} 0 & 1 \\ 1 & 0 \end{bmatrix}\begin{bmatrix} 0 & 0 \\ 1 & 0 \end{bmatrix}\begin{bmatrix} 0 & 1 \\ 1 & 0 \end{bmatrix}\right) = f\left(\begin{bmatrix} 0 & 1 \\ 0 & 0 \end{bmatrix}\right) = v,
\]
as well as
\[
w = f\left(\begin{bmatrix} 0 & 0 \\ 1 & 0 \end{bmatrix}\right)=f\left(\begin{bmatrix} 1 & 1 \\ 0 & 1 \end{bmatrix}\begin{bmatrix} 0 & 0 \\ 1 & 0 \end{bmatrix}\begin{bmatrix} 1 & 1 \\ 0 & 1 \end{bmatrix}\right) = f\left(\begin{bmatrix} 1 & 1 \\ 1 & 1 \end{bmatrix}\right) = u+v+w
\]
which implies that $u+v=0$, i.e., $u=v$. Therefore we must have $u=v=w$. Finally, consider $0,1\neq \lambda \in \FF_4$. We also have
\[
w = f\left(\begin{bmatrix} 0 & 0 \\ 1 & 0 \end{bmatrix}\right)=f\left(\begin{bmatrix} \lambda^{-1} & 0 \\ 0 & 1 \end{bmatrix}\begin{bmatrix} 0 & 0 \\ 1 & 0 \end{bmatrix}\begin{bmatrix} \lambda & 0 \\ 0 & 1 \end{bmatrix}\right) = f\left(\begin{bmatrix} 0 & 0 \\ \lambda & 0 \end{bmatrix}\right) = \lambda w
\]
and so $u=v=w=0$, meaning $f(Y)\equiv 0$. However, by Corollary \ref{cor_f_of_one}, since $\cQ$ is a degree $2$ algebra we have that $f(1_\cQ)=\frac{2}{2} \neq 0$ (since $S$ is not the empty scheme). This is a contradiction, therefore no such $f$ extending $(\cQ,\theta)$ can exist.
\end{proof}

Unlike in section \ref{example_strong}, where we first constructed a quaternion algebra with non-trivial strong obstruction and then used tensor products to construct $(\cA,\sigma)$ of degree $2^n$ which also has non-trivial strong obstruction, we cannot apply the same tensor product construction to the example of \ref{example_weak} to produce examples of larger degree which have non-trivial weak obstruction. The following proposition makes this precise.

\begin{prop}\label{prop_serre} Let $n \geq 1$ and
define on the $k$--variety  $S^n$ the following Azumaya algebra with orthogonal
involution of degree $2^n$
\[
(\calA_n, \sigma_n) = p_1^*(\calQ, \theta) \otimes_{\cO|_{S^n}} \dots   \otimes_{\cO|_{S^n}} p_n^*(\calQ, \theta)
\]
where $p_i\colon S^n \to S$ is the $i^\textrm{th}$ projection and $p_i^*$ is the pullback of $\cO$--algebras. Then $(\calA_n, \sigma_n)$ is locally quadratic for any $n\geq 1$, and $(\cA_n,\si_n)$ can be extended to a quadratic triple if and only if $n \geq 2$. Equivalently, $\Weak(\cA_n,\si_n)\neq 0$ if and only if $n=1$.
\end{prop}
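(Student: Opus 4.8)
The plan is to treat $n=1$ separately — it is exactly Lemma \ref{lem_serre} — and to show that for every $n\geq 2$ the pair $(\calA_n,\sigma_n)$ \emph{does} extend to a quadratic triple, so that $\Weak(\calA_n,\sigma_n)=0$ by Theorem \ref{prop_obstructions}\ref{prop_obstructions_ii}. The one non-formal ingredient is the observation that the canonical involution $\theta$ on $\calQ$ is not merely orthogonal but \emph{symplectic}. Indeed, $\theta$ is the fppf-descent of the involution $\eta_0$ of $\Mat_2(\cO)$ of Example \ref{split_example}\ref{split_example_b}, and $\eta_0$ is adjoint to the polar form $b_{q_0}$ of the quadratic form $q_0(x_1,x_2)=x_1x_2$; since $\Char k=2$ this polar form is alternating, as $b_{q_0}(m,m)=2q_0(m)=0$, so $\eta_0$, and hence its descent $\theta$, is symplectic. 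Because being symplectic is stable under base change (Section \ref{orin}), each pullback $p_i^*\theta$ on $p_i^*\calQ$ over $S^n$ is symplectic as well.

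For $n=2$, $(\calA_2,\sigma_2)=(p_1^*\calQ\otimes_{\cO}p_2^*\calQ,\,p_1^*\theta\otimes p_2^*\theta)$ is a tensor product of two Azumaya algebras with symplectic involution, so Corollary \ref{cor_symplectic_ext} (a consequence of Proposition \ref{tens_symplectic}) gives $\Weak(\calA_2,\sigma_2)=0$ directly. For $n\geq 3$ I would factor $\sigma_n=\bigl(p_1^*\theta\otimes p_2^*\theta\bigr)\otimes\bigl(p_3^*\theta\otimes\cdots\otimes p_n^*\theta\bigr)$ and combine the two tensor-product constructions of Section \ref{tens}: Proposition \ref{tens_symplectic} first equips $(p_1^*\calQ\otimes_{\cO}p_2^*\calQ,\,p_1^*\theta\otimes p_2^*\theta)$ with a semi-trace $f_\otimes$, making it a quadratic triple; then, regarding $\calA_n$ as the tensor product of this quadratic triple with the Azumaya algebra with orthogonal involution $(p_3^*\calQ\otimes_{\cO}\cdots\otimes_{\cO}p_n^*\calQ,\,p_3^*\theta\otimes\cdots\otimes p_n^*\theta)$, Proposition \ref{tens_triple_invol} produces a quadratic triple $(\calA_n,\sigma_n,f_{1*})$. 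Thus $(\calA_n,\sigma_n)$ extends to a quadratic triple for every $n\geq 2$, and therefore $\Weak(\calA_n,\sigma_n)=0$.

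It then remains to record that $(\calA_n,\sigma_n)$ is locally quadratic for every $n\geq 1$, so that the weak obstruction is defined and Theorem \ref{prop_obstructions}\ref{prop_obstructions_ii} applies: for $n=1$ this is part of Lemma \ref{lem_serre}, and for $n\geq 2$ it follows from the quadratic triple just constructed via Lemma \ref{lem_1_in_Symd} (or, alternatively, by an induction on $n$ using Lemma \ref{tens_triple_invol_obs}\ref{obs1} with base case $(\calQ,\theta)$). Assembling the two cases yields the claimed dichotomy, and its reformulation in terms of $\Weak$ is immediate from Theorem \ref{prop_obstructions}\ref{prop_obstructions_ii}. I expect the only genuinely non-routine step to be the identification of $\theta$ as symplectic — once that is secured, everything else is a direct appeal to Propositions \ref{tens_triple_invol} and \ref{tens_symplectic}, Corollary \ref{cor_symplectic_ext}, and Theorem \ref{prop_obstructions}; the only mild care-point is the degenerate bookkeeping at $n=2$, where the "remaining" tensor factor is empty and one simply invokes Corollary \ref{cor_symplectic_ext} on the nose.
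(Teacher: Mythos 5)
Your proof is correct and follows essentially the same route as the paper: reduce $n=1$ to Lemma \ref{lem_serre}, observe that $\theta$ is symplectic, apply Proposition \ref{tens_symplectic} (via Corollary \ref{cor_symplectic_ext}) for $n=2$, and Proposition \ref{tens_triple_invol} for $n\geq 3$. The only (harmless) divergence is how symplecticity of $\theta$ is established: the paper deduces it from local quadraticity via Lemma \ref{lem_symp_1_symd}, whereas you verify directly that the polar form of $q_0$ is alternating in characteristic $2$ and descend; both are valid.
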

\begin{proof}
Lemma \ref{lem_serre} is the case $n=1$ and shows in particular that $(\calQ, \theta) $ is locally quadratic. Since $k$ is characteristic $2$, this means by Lemma \ref{lem_symp_1_symd} that $\theta$ is also symplectic. Therefore $p_i^*(\cQ,\theta)$ have symplectic involutions and so when $n=2$, Proposition \ref{tens_symplectic} shows that $p_1^*(\cQ,\theta)\otimes_{\cO} p_2^*(\cQ,\theta)$ can be extended to a quadratic triple. Then Proposition \ref{tens_triple_invol} handles the cases of $n\geq 3$.
\end{proof}

\subsection{Non-trivial Obstructions Which Become Trivial in Characteristic $2$}\label{sec_weird_example}
Here we provide an example of an Azumaya $\cO$--algebra with locally quadratic involution such that both the strong and weak obstructions are non-trivial, but where base changing to $S' = S\times_{\Spec(\ZZ)}\Spec(\FF_2)$, i.e. reducing modulo $2$, causes both obstructions to be trivial.

To start, we build the base scheme by gluing the two affine schemes $U_1 = \Spec(\ZZ[x]/\langle 4x \rangle)$ and $U_2 = \Spec(\ZZ[\frac{1}{x}]/\langle \frac{4}{x}\rangle)$. Notice that the localization $\left(\ZZ[x]/\langle 4x \rangle\right)_x \cong \ZZ/4\ZZ[x,\frac{1}{x}]$, and since $U_1\cong U_2$, this is also the localization of $\ZZ[\frac{1}{x}]/\langle \frac{4}{x}\rangle$ at $\frac{1}{x}$. Hence we set $U_{12}=U_{21}=\Spec(\ZZ/4\ZZ[x,\frac{1}{x}])$ and then glue $U_1$ and $U_2$ together along $U_{12}$ to produce the base scheme $S$. We note that $S$ is not flat as a $\Spec(\ZZ)$--scheme, since, for example, the map $U_{12} \to \Spec(\ZZ)$ corresponds to the map $\ZZ \to \ZZ/4\ZZ[x,\frac{1}{x}]$ which is not a flat map of rings. The global sections of $S$ can be computed as the equalizer in the diagram
\[
\begin{tikzcd}
\cO(S) \arrow[hookrightarrow]{r} & \ZZ[x]/\langle 4x \rangle \times \ZZ[\frac{1}{x}]/\langle \frac{4}{x}\rangle \arrow[yshift = 0.5ex]{r} \arrow[yshift = -0.5ex]{r} & \ZZ/4\ZZ[x,\frac{1}{x}]
\end{tikzcd}
\]
and this yields that $\cO(S) \cong \{(a,b) \in \ZZ^2 \mid a\equiv b \pmod{4}\}$. Interestingly, $2=(2,2)\in \cO(S)$ is not a zero divisor even though $2$ is a zero divisor on both $U_1$ and $U_2$.

Next, we define a locally free $\cO$--module of constant rank $2$ by twisting the free module $\cO^2$. Considering the natural (Zariski) cover $\{U_i \to S\}_{i=1,2}$, we are in the setting of gluing sheaves on a topological space as in \cite[Tag 00AK]{Stacks}, and so it is sufficient to choose any automorphism
\[
\alpha_{12} \colon (\ZZ/4\ZZ[x,\tfrac{1}{x}])^2 \iso (\ZZ/4\ZZ[x,\tfrac{1}{x}])^2
\]
in order to specify gluing data, i.e., a cocycle, since then $\alpha_{21}=\alpha_{12}^{-1}$ and $\alpha_{ii}=\Id$ are required. We choose the automorphism $\alpha_{12} = \begin{bmatrix} 1 & 2 \\ 0 & 1 \end{bmatrix}$ and denote the twisted module by $\cE$.
\lv{%
The global sections of $\cE$ can be computed as the kernel of the map
\begin{align*}
(\ZZ[x]/\langle 4x \rangle)^2 \times (\ZZ[\tfrac{1}{x}]/\langle \tfrac{4}{x} \rangle)^2 &\to (\ZZ/4\ZZ[x,\tfrac{1}{x}])^2 \\
\left(\begin{bmatrix} a+f \\ b+g \end{bmatrix},\begin{bmatrix} f'+a' \\ g'+b'\end{bmatrix}\right) &\mapsto \begin{bmatrix} \overline{(a+2b)}+(f+2g) \\ \overline{b}+g \end{bmatrix} - \begin{bmatrix} f'+\overline{a'} \\ g'+\overline{b'}\end{bmatrix}
\end{align*}
where $a,b,a',b' \in \ZZ$, $f,g \in \ZZ/4\ZZ[x]$ with no constant term, $f',g' \in \ZZ/4\ZZ[\frac{1}{x}]$ also with no constant term, and overline denotes reduction modulo $4$. Therefore,
\[
\cE(S) = \left\{ \left(\begin{bmatrix} a \\ b \end{bmatrix},\begin{bmatrix} a' \\ b' \end{bmatrix}\right) \mid \begin{array}{l} a,b,a',b'\in \ZZ \\ a'\equiv a+2b \pmod{4} \\ b' \equiv b \pmod{4} \end{array} \right\}.
\]
}
The endomorphism algebra $\cQ = \cEnd_{\cO}(\cE)$ is a neutral Azumaya $\cO$--algebra which is the twist of $\Mat_2(\cO)$ over the same cover and along the inner automorphism
\[
\Inn(\alpha_{12}) \colon \Mat_2(\ZZ/4\ZZ[x,\tfrac{1}{x}]) \iso \Mat_2(\ZZ/4\ZZ[x,\tfrac{1}{x}]).
\]
\lv{%
We may compute the global sections $\cQ(S)$ as the kernel of the map
\begin{align*}
\Mat_2(\ZZ[x]/\langle 4x \rangle) \times \Mat_2(\ZZ[\tfrac{1}{x}]/\langle \tfrac{4}{x} \rangle) &\to \Mat_2(\ZZ/4\ZZ[x,\tfrac{1}{x}])\\
(B_1,B_2)&\mapsto \Inn(\alpha_{12})(\overline{B_1})-\overline{B_2}
\end{align*}
where overline indicates reducing modulo $4$. In detail, this is the following map
\begin{align*}
&\left(\begin{bmatrix} a+f & b+g \\ c+h & d+k \end{bmatrix}, \begin{bmatrix} f'+a' &  g'+b' \\ h'+c' & k'+d' \end{bmatrix}\right)\\
\mapsto &\begin{bmatrix} \overline{(a+2c)}+(f+2h) & \overline{(2a+b+2d)}+(2f+g+2k) \\ \overline{c+h} & \overline{(2c+d)}+(2h+k)\end{bmatrix} - \begin{bmatrix} f'+\overline{a'} &  g'+\overline{b'} \\ h'+\overline{c'} & k'+\overline{d'} \end{bmatrix}
\end{align*}
where $a,b,c,d,a',b',c',d' \in \ZZ$, the terms $f,g,h,k \in \ZZ/4\ZZ[x]$ with no constant term, and $f',g',h',k' \in \ZZ/4\ZZ[\frac{1}{x}]$ also with no constant terms. Hence, in the kernel all polynomial terms must be zero, and so we get
\[
\cQ(S) = \left\{\left(\begin{bmatrix} a & b \\ c & d \end{bmatrix},\begin{bmatrix} a' & b' \\ c' & d' \end{bmatrix}\right) \in \Mat_2(\ZZ)^2 \mid \begin{array}{l}a' \equiv a+2c \pmod{4} \\ b' \equiv 2a+c+2d \pmod{4} \\ c' \equiv c \pmod{4} \\ d'\equiv 2c+d \pmod{4} \end{array} \right\}.
\]
}

If we equip $\Mat_2(\cO)$ with the hyperbolic orthogonal involution
\[
\sigma(\begin{bmatrix} a & b \\ c & d \end{bmatrix}) = \begin{bmatrix} d & b \\ c & a \end{bmatrix},
\]
then $\sigma$ will descend to an involution $\theta$ on $\cQ$ since $\Inn(\alpha_{12})\circ \sigma = \sigma \circ \Inn(\alpha_{12})$. Notice that $\sigma$ is locally quadratic since it can be extended to a quadratic triple using $\ell = \begin{bmatrix} 1 & 0 \\ 0 & 0 \end{bmatrix} \in \Mat_2(\cO(S))$ as in Example \ref{quafoex}\ref{quafoex_b}. Therefore, $(\cQ,\theta)$ is an Azumaya $\cO$--algebra with locally quadratic involution since $\theta$ is isomorphic to $\sigma$ on $U_1$ and $U_2$.
\lv{%
The involution $\theta$ behaves on global sections as $\sigma$ on each component, i.e.,
\[
\theta\left(\begin{bmatrix} a & b \\ c & d \end{bmatrix},\begin{bmatrix} a' & b' \\ c' & d' \end{bmatrix}\right) = \left(\begin{bmatrix} d & b \\ c & a \end{bmatrix},\begin{bmatrix} d' & b' \\ c' & a' \end{bmatrix}\right).
\]
}
\begin{lem}\label{lem_example_3_nontrivial}
Consider the quaternion $\cO$--algebra $(\cQ,\theta)$ with locally quadratic involution constructed above. The weak obstruction $\Weak(\cQ,\theta)$ is non-trivial and hence the strong obstruction $\Strong(\cQ,\theta)$ is non-trivial as well.
\end{lem}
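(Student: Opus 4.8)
The plan is to prove directly that $(\cQ,\theta)$ admits no quadratic triple structure over $S$; by Theorem \ref{prop_obstructions}\ref{prop_obstructions_ii} this is precisely the assertion $\Weak(\cQ,\theta)\neq 0$, and then $\Strong(\cQ,\theta)\neq 0$ follows since the natural map $\check{H}^1(S,\cSkew_{\cQ,\theta})\to\check{H}^1(S,\cSkew_{\cQ,\theta}/\cAlt_{\cQ,\theta})$ sends $\Strong(\cQ,\theta)$ onto $\Weak(\cQ,\theta)$. So suppose, for contradiction, that $(\cQ,\theta,f)$ is a quadratic triple over $S$. By construction $(\cQ,\theta)|_{U_i}\cong(\Mat_2(\cO),\sigma)|_{U_i}$ for $i=1,2$, so $\cQ(U_i)=\Mat_2(R_i)$ with $\theta$ acting as $\sigma$, where $R_1=\ZZ[x]/\langle 4x\rangle$ and $R_2=\ZZ[\tfrac1x]/\langle\tfrac4x\rangle$. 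Applying Proposition \ref{lem_pair_affine}\ref{lem_pair_affine_i} over the affine schemes $U_1$ and $U_2$ produces elements $\ell_i\in\Mat_2(R_i)$ with $\ell_i+\sigma(\ell_i)=I$ and $f(s)=\Trd_{\cQ}(\ell_i|_T\cdot s)$ for all $T\in\Sch_{U_i}$ and $s\in\cSym_{\cQ,\theta}(T)$.

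Next I would extract the compatibility condition on the overlap $U_{12}=\Spec R_{12}$ with $R_{12}=\ZZ/4\ZZ[x,\tfrac1x]$. Since $U_{12}$ lies over both $U_1$ and $U_2$, the two descriptions of $f$ give $\Trd_{\cQ}(\ell_1|_{U_{12}}\cdot s)=f(s)=\Trd_{\cQ}(\ell_2|_{U_{12}}\cdot s)$ for all $s\in\cSym_{\cQ,\theta}(T)$ with $T$ over $U_{12}$, whence $\ell_1|_{U_{12}}-\ell_2|_{U_{12}}\in\cSym_{\cQ,\theta}^{\perp}(U_{12})=\cAlt_{\cQ,\theta}(U_{12})$ by Lemma \ref{lem_perp}\ref{lem_perp_iii}. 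I would then unwind the gluing: in the identification $\cQ(U_{12})\cong\Mat_2(R_{12})$ coming from one of the two charts, $\theta|_{U_{12}}$ becomes $\sigma$, so $\cAlt_{\cQ,\theta}(U_{12})=\{\ppmatrix{t}{0}{0}{-t}:t\in R_{12}\}$, while $\ell_1|_{U_{12}}$ is the reduction $\overline{\ell_1}\in\Mat_2(R_{12})$ of $\ell_1$ modulo $4$ and $\ell_2|_{U_{12}}$ is obtained from $\overline{\ell_2}$ by conjugating by $\alpha_{12}^{\pm1}$, where $\alpha_{12}=\ppmatrix{1}{2}{0}{1}$. Writing $\ell_i=\ppmatrix{a_i}{b_i}{c_i}{1-a_i}$, the relation $\ell_i+\sigma(\ell_i)=I$ forces $2b_i=2c_i=0$ in $R_i$; a short computation using $4=0$ in $R_{12}$ then shows that conjugating a matrix of this special shape by $\alpha_{12}^{\pm1}$ adds $2E_{12}$ to its $(1,2)$--entry and changes nothing else. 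Consequently the $(1,2)$--entry of $\ell_1|_{U_{12}}-\ell_2|_{U_{12}}$ is $\bar b_1-\bar b_2\pm 2$ in $R_{12}$, and since it must vanish we obtain $\bar b_1-\bar b_2=\mp 2=2$ in $R_{12}$.

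The contradiction then comes from the shapes of $\bar b_1$ and $\bar b_2$. The relation $2b_1=0$ in $R_1=\ZZ[x]/\langle 4x\rangle$ forces $b_1$ to be a combination of $x,x^2,\dots$ with coefficients in $\{0,2\}\subset\ZZ/4\ZZ$, so its image $\bar b_1\in R_{12}$ has vanishing $x^0$--coefficient and involves only non-negative powers of $x$; symmetrically $\bar b_2\in R_{12}$ has vanishing $x^0$--coefficient and involves only non-positive powers of $x$. Hence $\bar b_1-\bar b_2$ has $x^0$--coefficient $0$ in $R_{12}$, whereas the $x^0$--coefficient of $2$ is $2\neq 0$ in $\ZZ/4\ZZ$. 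This contradicts $\bar b_1-\bar b_2=2$, so no global $f$ exists. Therefore $\Weak(\cQ,\theta)\neq 0$, and hence also $\Strong(\cQ,\theta)\neq 0$.

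The step I expect to be the main obstacle is the bookkeeping in the middle paragraph: orienting the gluing cocycle correctly so as to confirm that conjugation by $\alpha_{12}$ contributes exactly a $2E_{12}$ shift in the $(1,2)$--entry (and nothing on the diagonal or in the $(2,1)$--entry), and pinning down the images in $R_{12}=\ZZ/4\ZZ[x,\tfrac1x]$ of the $2$--torsion submodules of $R_1$ and $R_2$ to which $b_1$ and $b_2$ are confined. Everything else---the reduction to the non-existence of a global $f$, the identity $\cSym_{\cQ,\theta}^{\perp}=\cAlt_{\cQ,\theta}$, and the final mismatched-support argument in the Laurent polynomial ring---is routine.
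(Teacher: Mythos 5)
Your proposal is correct and follows essentially the same route as the paper: restrict to the two affine charts, produce $\ell_1,\ell_2$ via the affine classification, reduce the gluing condition to $\Inn(\alpha_{12})(\ell_1)-\ell_2\in\cAlt_{\cQ,\theta}(U_{12})$, observe that conjugation by $\alpha_{12}$ shifts the $(1,2)$--entry by $2$, and derive a contradiction from the fact that the $2$--torsion conditions $2b_i=0$ in $\ZZ[x]/\langle 4x\rangle$ and $\ZZ[\tfrac1x]/\langle\tfrac4x\rangle$ kill the constant terms of $\bar b_1$ and $\bar b_2$. The only cosmetic difference is that the paper first normalizes the diagonal of each $\ell_i$ modulo alternating elements, whereas you leave it general and note the diagonal discrepancy is absorbed by $\cAlt$; both yield the same contradiction in the $(1,2)$--entry.
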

\begin{proof}
Assume there exists a quadratic triple $(\cQ,\theta,f)$. The restrictions of this quadratic triple to $U_1$ and $U_2$ is then a quadratic triple on an affine scheme, and so by Proposition \ref{lem_pair_affine}\ref{lem_pair_affine_i} they are described by elements $\ell_1 \in \cQ(U_1)$ and $\ell_2 \in \cQ(U_2)$ respectively. Since $\cQ(U_1) = \Mat_2(\ZZ[x]/\langle 4x \rangle)$ and $\ell_1+\sigma(\ell_1) = 1$, we can compute that $\ell_1$ must be of the form
\[
\ell_1 = \begin{bmatrix} a+g & 2h \\ 2k & 1-a-g \end{bmatrix}
\]
where $a \in \ZZ$ and $g,h,k \in \ZZ/4\ZZ[x]$ with no constant terms. Since $\ell_1$ is only determined up to an alternating element, we may assume $a+g=1$. Similarly, $\ell_2$ may be taken to be of the form
\[
\ell_2 = \begin{bmatrix} 1 & 2h' \\ 2k' & 0 \end{bmatrix}
\]
where $h',k' \in \ZZ/4\ZZ[\frac{1}{x}]$ with no constant terms. However, the maps $f|_{U_1}$ and $f|_{U_2}$ must agree on the overlap $U_{12}$. Since we glued together $\cQ$ using $\Inn(\alpha_{12})=\Inn(\begin{bmatrix} 1 & 2 \\ 0 & 1 \end{bmatrix})$, this is equivalent to requiring that 
\[
\Inn(\alpha_{12})(\ell_1)-\ell_2 \in \cAlt_{\cQ,\theta}(U_{12}) = \Alt(\Mat_2(\ZZ/4\ZZ[x,\tfrac{1}{x}]),\sigma)).
\]
We compute the following, keeping in mind that $2=-2$ in $\ZZ/4\ZZ[x,\frac{1}{x}]$.
\begin{align*}
\Inn(\alpha_{12})(\ell_1)-\ell_2 &= \begin{bmatrix} 1 & 2 \\ 0 & 1 \end{bmatrix}\begin{bmatrix} 1 & 2h \\ 2k & 0 \end{bmatrix}\begin{bmatrix} 1 & 2 \\ 0 & 1 \end{bmatrix} - \begin{bmatrix} 1 & 2h' \\ 2k' & 0 \end{bmatrix} \\
&= \begin{bmatrix} 1 & 2+2h \\ 2k & 0 \end{bmatrix} - \begin{bmatrix} 1 & 2h' \\ 2k' & 0 \end{bmatrix}\\
&= \begin{bmatrix} 0 & -2h'+2+2h \\ -2k'+2k & 0 \end{bmatrix}.
\end{align*}
However, we have that
\[
\Alt(\Mat_2(\ZZ/4\ZZ[x,\tfrac{1}{x}]),\sigma)) = \left\{\begin{bmatrix} y & 0 \\ 0 & -y \end{bmatrix} \mid y\in \ZZ/4\ZZ[x,\tfrac{1}{x}]\right\}
\]
and therefore $\alpha_{12}(\ell_1)-\ell_2$ cannot be an alternating element since $-2h'+2+2h \neq 0$ because $h'$ and $h$ have no constant terms. This is a contradiction, and therefore no such $f$ making $(\cQ,\theta,f)$ a quadratic triple exists. Thus $\Weak(\cQ,\theta)\neq 0$, and therefore also $\Strong(\cQ,\theta)\neq 0$, as claimed.
\end{proof}

Now we will consider the base change diagram
\[
\begin{tikzcd}
S'=S\times_{\Spec(\ZZ)} \Spec(\FF_2) \arrow{r} \arrow{d} & S \arrow{d} \\
\Spec(\FF_2) \arrow{r} & \Spec(\ZZ)
\end{tikzcd}
\]
\begin{lem}\label{lem_ex_base_change_trivial}
The pullback $S'=S\times_{\Spec(\ZZ)} \Spec(\FF_2)$ is isomorphic to projective space $\PP^1_{\FF_2}$. Furthermore, the restriction $\cE|_{S'}$ is isomorphic to $\cO|_{S'}^2$, meaning that $(\cQ,\theta)|_{S'} \cong (\Mat_2(\cO|_{S'}),\sigma)$ and therefore has trivial strong and weak obstructions.
\end{lem}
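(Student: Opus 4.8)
The plan is to compute the base change of $S$, of the module $\cE$, and of the descent datum for $\theta$ chart by chart, exploiting that $2$ becomes $0$ after tensoring with $\FF_2$, and then to exhibit a global $\ell$ that kills both obstructions over $S'$.

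\emph{Identifying $S'$.} First I would use that $-\times_{\Spec(\ZZ)}\Spec(\FF_2)$ commutes with gluing two schemes along an open subscheme. Applied to the construction of $S$ from $U_1$, $U_2$ and $U_{12}$, this presents $S'$ as the gluing of $U_i':=U_i\times_{\Spec(\ZZ)}\Spec(\FF_2)$ along $U_{12}':=U_{12}\times_{\Spec(\ZZ)}\Spec(\FF_2)$. Here $\cO(U_1')=\bigl(\ZZ[x]/\langle 4x\rangle\bigr)\ot_{\ZZ}\FF_2=\FF_2[x]$ because $4=0$ in $\FF_2$, and likewise $\cO(U_2')=\FF_2[1/x]$ and $\cO(U_{12}')=\ZZ/4\ZZ[x,1/x]\ot_{\ZZ}\FF_2=\FF_2[x,1/x]$. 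The transition maps, which before base change are the localizations at $x$ and at $1/x$, are unchanged, so $S'$ is $\Spec(\FF_2[x])$ and $\Spec(\FF_2[1/x])$ glued along $\Spec(\FF_2[x,1/x])$ identifying $x$ with the inverse of $1/x$; that is exactly $\PP^1_{\FF_2}$.

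\emph{Trivializing $\cE|_{S'}$ and $(\cQ,\theta)|_{S'}$.} Since $\cE$ is finite locally free, hence quasi-coherent, the $\mathrm{fppf}\sim\mathrm{small}$ correspondence of Section~\ref{quasi_coh} lets me describe $\cE$, and its restriction to $\Sch_{S'}$, by the Zariski gluing datum $\alpha_{12}=\left[\begin{smallmatrix}1&2\\0&1\end{smallmatrix}\right]$ on $(\ZZ/4\ZZ[x,1/x])^2$. Over $S'$ this cocycle becomes $\alpha_{12}\ot_{\ZZ}\FF_2=\left[\begin{smallmatrix}1&0\\0&1\end{smallmatrix}\right]$ since $2=0$ in $\FF_2[x,1/x]$, so the descent datum is trivial and $\cE|_{S'}\cong\cO|_{S'}^{2}$. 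Consequently $\cQ|_{S'}=\cEnd_{\cO|_{S'}}(\cE|_{S'})\cong\Mat_2(\cO|_{S'})$, and the descent datum $\Inn(\alpha_{12})$ defining $\theta$ reduces the same way to $\Inn\bigl(\left[\begin{smallmatrix}1&0\\0&1\end{smallmatrix}\right]\bigr)=\Id$, so $\theta|_{S'}$ corresponds to $\sigma$; hence $(\cQ,\theta)|_{S'}\cong(\Mat_2(\cO|_{S'}),\sigma)$.

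\emph{Killing the obstructions.} Finally I would take $\ell=\left[\begin{smallmatrix}1&0\\0&0\end{smallmatrix}\right]\in\Mat_2(\cO(S'))=\cQ(S')$, which satisfies $\ell+\sigma(\ell)=1$. By Example~\ref{quafoex}\ref{quafoex_b} the reduced trace form $\Trd(\ell\,\und)$ is then a linear map $\Mat_2(\cO|_{S'})\to\cO|_{S'}$ whose restriction to the symmetric submodule makes $\bigl(\Mat_2(\cO|_{S'}),\sigma,\Trd(\ell\,\und)\bigr)$ a quadratic triple. By Theorem~\ref{prop_obstructions}\ref{prop_obstructions_i} this is equivalent to $\Strong\bigl((\cQ,\theta)|_{S'}\bigr)=0$, and since the weak obstruction is the image of the strong obstruction, it vanishes as well. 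I do not expect a real obstacle here: the only delicate point is the consistent bookkeeping of base change for the gluing data of $S$, of $\cE$, and of $\theta$ in the first two steps — where quasi-coherence of $\cE$ is precisely what legitimizes working with the small-Zariski description — after which the conclusion is immediate.
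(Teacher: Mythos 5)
Your proposal is correct and follows essentially the same route as the paper: base change commutes with the gluing, the cocycle $\alpha_{12}$ and the descent datum $\Inn(\alpha_{12})$ both reduce to the identity modulo $2$, and the explicit $\ell=\left[\begin{smallmatrix}1&0\\0&0\end{smallmatrix}\right]$ kills the strong (hence weak) obstruction. No gaps.
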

\begin{proof}
The scheme $S'$ is glued together from the affine schemes 
\begin{align*}
U_1\times_{\Spec(\ZZ)} \Spec(\FF_2) &= \Spec(\ZZ[x]/\langle 4x \rangle \otimes_\ZZ \FF_2) \cong \Spec(\FF_2[x]),\text{ and}\\
U_2\times_{\Spec(\ZZ)} \Spec(\FF_2) &= \Spec(\ZZ[\tfrac{1}{x}]/\langle \tfrac{4}{x} \rangle \otimes_\ZZ \FF_2) \cong \Spec(\FF_2[\tfrac{1}{x}])
\end{align*}
along the open subset $U_{12}\times_{\Spec(\ZZ)} \Spec(\FF_2) = \Spec(\FF_2[x,\frac{1}{x}])$ in the standard construction of projective space, and therefore $S' \cong \PP^1_{\FF_2}$.

The restriction of our quaternion algebra, $\cQ|_{S'}$ is the twist of $\Mat_2(\cO|_{S'})$ along the automorphism
\[
\Inn(\alpha_{12})\otimes 1 \colon \Mat_2(\FF_2[x,\tfrac{1}{x}]) \iso \Mat_2(\FF_2[x,\tfrac{1}{x}]).
\]
However, since $\Inn(\alpha_{12})=\Inn(\begin{bmatrix} 1 & 2 \\ 0 & 1\end{bmatrix})$, then $\Inn(\alpha_{12})\otimes 1$ will be its reduction modulo $2$ which is simply the identity. Therefore $\cQ|_{S'} = \Mat_2(\cO|_{S'})$ and also $\theta|_{S'} = \sigma$ is the standard hyperbolic orthogonal involution. As noted above, $\sigma$ can be extended to a quadratic triple using $\ell = \begin{bmatrix} 1 & 0 \\ 0 & 0 \end{bmatrix} \in \Mat_2(\cO|_{S'})$ which satisfies $\ell+\sigma(\ell)=1$ and therefore $\Strong(\Mat_2(\cO|_{S'}),\sigma)=0$, hence also $\Weak(\Mat_2(\cO|_{S'}),\sigma)=0$, so both obstructions are trivial as claimed.
\end{proof}

This example demonstrates two interesting facts. First, that obstructions may be non-trivial even for neutral Azumaya algebras. Second, examples of non-trivial obstructions are not confined to the setting of characteristic $2$ and allowing $2$ to be neither invertible nor zero provides examples of non-trivial obstructions of a different nature than those which occur in characteristic $2$.

\begin{remark}
Working over the same scheme $S$ constructed in Section \ref{sec_weird_example}, a neutral Azumaya $\cO$--algebra of any even degree with non-trivial obstructions can be constructed in a similar way. Twisting the $\cO$--module $\cO^{2n}$ along the same open cover using the automorphism
\[
\alpha_{12} = \begin{bmatrix} 1 & & 2 \\ & \ddots & \\ & & 1 \end{bmatrix}
\]
will produce a locally free $\cO$--module of rank $2n$. Twisting $(\Mat_{2n}(\cO),\sigma)$ by $\Inn(\alpha_{12})$, where $\sigma$ is the hyperbolic orthogonal involution which reflects across the second diagonal, will produce $(\cEnd_{\cO}(\cE),\theta)$ which is locally quadratic but has $\Weak(\cEnd_{\cO}(\cE),\theta)\neq 0$ and hence $\Strong(\cEnd_{\cO}(\cE),\theta)\neq 0$.
\end{remark}

Constructing higher degree examples of non-trivial obstructions using tensor products does not work, as the following lemma shows.
\begin{lem}
Let $(\cQ,\theta)$ be the quaternion $\cO$--algebra with locally quadratic involution constructed in Section \ref{sec_weird_example}. Let $(\cA,\theta')=(\cQ,\theta)\otimes_{\cO} \ldots \otimes_{\cO} (\cQ,\theta)$ be the tensor product of $n\geq 2$ copies of $(\cQ,\theta)$. Then $\Strong(\cA,\theta')=0$ and thus $\Weak(\cA,\theta')=0$ also.
\end{lem}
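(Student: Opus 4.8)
The plan is to apply Theorem~\ref{prop_obstructions}\ref{prop_obstructions_i}, which reduces the claim $\Strong(\cA,\theta')=0$ to exhibiting a global section $\ell\in\cA(S)$ with $\ell+\theta'(\ell)=1_{\cA}$; once this is done, $\Weak(\cA,\theta')=0$ follows automatically, since by Definition~\ref{defn_obs} and the diagram \eqref{eq_long_exacts} it is the image of $\Strong(\cA,\theta')$ under the map $\check{H}^1(S,\cSkew_{\cA,\theta'})\to\check{H}^1(S,\cSkew_{\cA,\theta'}/\cAlt_{\cA,\theta'})$. Recall from Section~\ref{sec_weird_example} that $S=U_1\cup U_2$ with $U_{12}=\Spec(\ZZ/4\ZZ[x,\tfrac{1}{x}])$, and that $\cQ|_{U_i}\cong(\Mat_2(\cO),\sigma)|_{U_i}$, the two trivializations being identified over $U_{12}$ by $\Inn(\alpha_{12})$ with $\alpha_{12}=\begin{bmatrix}1&2\\0&1\end{bmatrix}$. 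Therefore $(\cA,\theta')|_{U_i}\cong(\Mat_{2^n}(\cO),\sigma^{\otimes n})|_{U_i}$ --- in particular $(\cA,\theta')$ is locally quadratic, being the split triple of degree $2^n$ of Example~\ref{split_example}\ref{split_example_b} over $U_1$ and $U_2$, by Lemma~\ref{lem_smart_equiv}\ref{lem_smart_equiv_iv} --- and the trivializations are glued over $U_{12}$ by $\Inn(\alpha_{12}^{\otimes n})$. Hence a global $\ell$ amounts to matrices $\ell_1\in\Mat_{2^n}(\cO(U_1))$ and $\ell_2\in\Mat_{2^n}(\cO(U_2))$ with $\ell_i+\sigma^{\otimes n}(\ell_i)=I_{2^n}$ and $\Inn(\alpha_{12}^{\otimes n})(\overline{\ell_1})=\overline{\ell_2}$, where the bar denotes reduction of entries into $\ZZ/4\ZZ[x,\tfrac{1}{x}]$.

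First I would take $\ell_1=E_{11}\otimes I_{2^{n-1}}$, where $E_{ij}$ are the matrix units of $\Mat_2$. Since $\sigma(E_{11})=E_{22}$ and $\sigma(I_2)=I_2$, we get $\ell_1+\sigma^{\otimes n}(\ell_1)=I_{2^n}$; and since $\Inn(\alpha_{12})(E_{11})=\begin{bmatrix}1&2\\0&0\end{bmatrix}$ and $\Inn(\alpha_{12})(I_2)=I_2$ in $\Mat_2(\ZZ/4\ZZ[x,\tfrac{1}{x}])$, one finds $\Inn(\alpha_{12}^{\otimes n})(\overline{\ell_1})=\begin{bmatrix}1&2\\0&0\end{bmatrix}\otimes I_{2^{n-1}}=\overline{\ell_1}+2\,(E_{12}\otimes I_{2^{n-1}})$. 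The naive attempt $\ell_2=\begin{bmatrix}1&2\\0&0\end{bmatrix}\otimes I_{2^{n-1}}$ fails, since $\begin{bmatrix}1&2\\0&0\end{bmatrix}+\sigma\!\begin{bmatrix}1&2\\0&0\end{bmatrix}=\begin{bmatrix}1&4\\0&1\end{bmatrix}\neq I_2$ over $\cO(U_2)$ --- exactly the obstruction exploited in Lemma~\ref{lem_example_3_nontrivial} for $n=1$, where there is no further tensor factor to play with. The crux is that for $n\geq 2$ the correction $2\,(E_{12}\otimes I_{2^{n-1}})$ lifts to a genuinely $\sigma^{\otimes n}$-skew element \emph{defined over $\ZZ$}: set $s:=E_{12}\otimes(2H)\otimes I_{2^{n-2}}$ with $H=\begin{bmatrix}1&0\\0&-1\end{bmatrix}$. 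Because $\sigma(E_{12})=E_{12}$, $\sigma(2H)=-2H$ and $\sigma(I_2)=I_2$, one has $\sigma^{\otimes n}(s)=-s$; and over $\ZZ/4\ZZ$ one has $2H\equiv 2I_2$, so $\overline{s}=2\,(E_{12}\otimes I_{2^{n-1}})$.

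Consequently I would set $\ell_2:=\ell_1+s=E_{11}\otimes I_{2^{n-1}}+E_{12}\otimes(2H)\otimes I_{2^{n-2}}$, a matrix with integer entries, hence a section over $U_2$. Since $s$ is $\sigma^{\otimes n}$-skew, $\ell_2+\sigma^{\otimes n}(\ell_2)=\ell_1+\sigma^{\otimes n}(\ell_1)=I_{2^n}$, while $\overline{\ell_2}=\overline{\ell_1}+\overline{s}=\begin{bmatrix}1&2\\0&0\end{bmatrix}\otimes I_{2^{n-1}}=\Inn(\alpha_{12}^{\otimes n})(\overline{\ell_1})$, so $\ell_1$ and $\ell_2$ agree on $U_{12}$ and glue to $\ell\in\cA(S)$ with $\ell+\theta'(\ell)=1_{\cA}$; by Theorem~\ref{prop_obstructions}\ref{prop_obstructions_i} this yields $\Strong(\cA,\theta')=0$, hence $\Weak(\cA,\theta')=0$. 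The only genuinely delicate step is pinpointing the skew lift $s$: one must transport the correction $2(E_{12}\otimes I_{2^{n-1}})$ --- which over $\cO(U_2)$ is not skew --- into a tensor slot where it becomes the mod-$4$ reduction of the $\ZZ$-integral skew element $E_{12}\otimes 2H$, using that $2H=\begin{bmatrix}2&0\\0&-2\end{bmatrix}$ is $\sigma$-skew yet reduces to $2I_2$. The rest is routine checking of the gluing convention (harmless since $\alpha_{12}^{-1}\equiv\alpha_{12}\pmod 4$, so $\Inn(\alpha_{12}^{\otimes n})$ is an involution on $\Mat_{2^n}(\ZZ/4\ZZ[x,\tfrac{1}{x}])$) and of the identities $\sigma(E_{ij})$, $\sigma(H)$. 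As an alternative for $n\geq 3$, once $\Strong(\cQ^{\otimes 2},\theta^{\otimes 2})=0$ is obtained from the case $n=2$, Lemma~\ref{tens_triple_invol_obs}\ref{obs2} applied to the factorization $\cA=\cQ^{\otimes 2}\otimes_{\cO}\cQ^{\otimes(n-2)}$ gives $\Strong(\cA,\theta')=\Strong(\cQ^{\otimes 2},\theta^{\otimes 2})\cup 1_{\cQ^{\otimes(n-2)}}=0$ immediately.
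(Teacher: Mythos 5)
Your proof is correct and is essentially the paper's own argument: for $n=2$ your $\ell_1=E_{11}\otimes I_2$ and $\ell_2=E_{11}\otimes I_2+E_{12}\otimes 2H$ are exactly the elements used in the paper, exploiting that $2H$ is $\sigma$-skew over $\ZZ$ yet congruent to $2I_2$ modulo $4$. The only cosmetic difference is that you carry the explicit construction through for all $n$ at once, whereas the paper treats $n=2$ explicitly and deduces $n\geq 3$ from the cup-product formula of Lemma \ref{tens_triple_invol_obs}\ref{obs2} --- a route you also note as an alternative.
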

\begin{proof}
First, $(\cA,\theta')$ is indeed locally quadratic by Lemma \ref{tens_triple_invol_obs}\ref{obs1}. We show that $\Strong(\cA,\theta')=0$ when $n=2$, and then the claim for $n\geq 3$ follows from Lemma \ref{tens_triple_invol_obs}\ref{obs2}.

Over the cover $\{U_i \to S\}_{i=1,2}$, the algebra $(\cA,\theta')=(\cQ,\theta)\otimes_{\cO}(\cQ,\theta)$ is the twist of $\Mat_4(\cO)$ by the automorphism $\varphi =\Inn(\begin{bmatrix} 1 & 2 \\ 0 & 1 \end{bmatrix}\otimes \begin{bmatrix} 1 & 2 \\ 0 & 1 \end{bmatrix})$. We choose elements
\begin{align*}
\ell_1 &= \begin{bmatrix} 1 & 0 \\ 0 & 1 \end{bmatrix}\otimes \begin{bmatrix} 1 & 0 \\ 0 & 0 \end{bmatrix} \in \Mat_4(\ZZ[x]/\langle 4x \rangle) \\
\ell_2 &= \begin{bmatrix} 1 & 0 & 2 & 0 \\ 0 & 1 & 0 & -2 \\ 0 & 0 & 0 & 0 \\ 0 & 0 & 0 & 0 \end{bmatrix} \in \Mat_4(\ZZ[\tfrac{1}{x}]/\langle \tfrac{4}{x} \rangle).
\end{align*}
These both satisfy $\ell_i + (\sigma\otimes\sigma)(\ell_i)=1$. Since the involution $\theta' = \theta\otimes \theta$ is isomorphic to $\sigma\otimes \sigma$ over $U_{12}$, and furthermore
\[
\varphi(\ell_1|_{U_{12}})= \ell_2|_{U_{12}},
\]
these $\ell_i$ glue into a global section $\ell \in (\cA,\theta')$ with $\ell+\theta'(\ell)=1$. Hence $\Strong(\cA,\theta')=0$ as claimed.
\end{proof}

\end{document}